\newtheorem{theorem}{Theorem}[section]
\newtheorem{remark}{Remark}[section]
\newtheorem{lemma}{Lemma}[section]
\newtheorem{proposition}{Proposition}[section]
\begin{document}
\title{ \textbf{Null Controllability of a nonlinear age, space and two-sex structured population dynamics model } }
\author{Yacouba Simpor\'e \footnote{Laboratoire LAMI, Universit\'e  Joseph Ki-Zerbo,01 BP 7021 Ouaga 01, Burkina Faso. DeustoTech, Fundaci\'on Deusto Avda. Universidades, 24, 48007, Bilbao, Basque Country, Spain (\texttt{simplesaint@gmail.com})}~\footnote{\textbf{Funding}: This project has received funding from the European Research Council (ERC) under the European Union's Horizon 2020 research and innovation programme (grant agreement No. 694126-Dycon).
 }\and Oumar Traor\'e\footnote{Laboratoire LAMI, Universit\'e  Joseph Ki-Zerbo,01 BP 7021 Ouaga 01, Burkina Faso (\texttt{otraoret@gmail.com})}}
\date{}
\maketitle
\begin{abstract}
	In this  paper, we study the null controllability of a nonlinear age, space and two-sex structured population dynamics model. This model is such that the nonlinearity and the couplage are at birth level.\\
	We consider a population with males and females and we are dealing with two cases of null controllability problem.\\ The first problem is related to the total extinction, which means that, we estimate a time $T$ to bring the male and female subpopulation density to zero.\\ The second case concerns null controllability of male or female subpopulation individuals. Here, So if $A$ is the life span of the individuals, at time $T+A$ one will get the total extinction of the population. \\
	Our method uses first an observability inequality related to the adjoint of an auxiliary system, a null controllability of the linear auxiliary system, and after the Schauder's fixed point theorem.
		\end{abstract}
		\textbf{Key words}: two-sex population dynamics model, Null controllability, method of characteristics, Observability inequality, Schauder fixed point.\\
		\textbf{AMS subject classifications.} 93B03, 93B05, 92D25	
\section{Introduction and Main results}
In this paper, we study the null-controllability of an infinite dimensional nonlinear  coupled system describing the dynamics of two-sex structured population with spatial position.\\
Let $(m,f)$ be the solution of the following system:
\begin{equation}
\left\{ 
\begin{array}{ccc}
m_t+m_a-K_m\Delta m+\mu_m m&=\chi_{\Xi}v_m&\text{ in }Q ,\\ 
f_t+f_a-K_f\Delta f+\mu_f f&=\chi_{\Xi'}v_f&\text{ in }Q,\\ 
m(\sigma,a,t)=f(\sigma,a,t)&=0&\text{ on }\Sigma, \\
m(x,a,0)=m_0\quad f(x,a,0)=f_0&&\text{ in }Q_A,\\
m(x,0,t)=(1-\gamma)N(x,t),\quad f(x,0,t)=\gamma N(x,t)&& \text{ in }Q_T,\\
N(x,t)=\int_{0}^{A}\beta(a,M)fda\text{; } M=\int_{0}^{A}\lambda(a)mda && \text{ in }Q_T.
\end{array}
\right.
\label{1}
\end{equation}
where $T$ is a positive number, $\gamma\in]0,1[,$ and $\Omega$ a bounded open subset of $\mathbb{R}^{N}$ whose boundary is assumed to be of class $C^2$. Here $m_{z}$ and $f_{z}$ stand respectively for differentiation of $m$ and $f$ with respect to the variable $z$ with $z\in \{a,t\}.$\\
In $(\ref{1})$,  $Q=\Omega\times (0,A)\times (0,T)\text{; } \Xi=\omega\times (a_1,a_2)\times (0,T)\text{; } \Xi'=\omega'\times (b_1,b_2)\times (0,T)$ where $0\leq a_1<a_2\leq A$, \\$0\leq b_1<b_2\leq A$, $\omega\text{ and } \omega'$ two nonempty open subsets of $\Omega$;  $Q_A=\Omega\times (0,A)$, $Q_T=\Omega\times (0,T)$ and \\$\Sigma=\partial\Omega\times (0,A)\times (0,T).$\\
The functions, $m(x,a,t)$ and $f(x,a,t)$ describe  respectively the density of males and females of age $a$ being at time $t$  at the location $x$.\\
Moreover, $\mu_m$ and $ \mu_f$ denote respectively the natural mortality rate of males and females. The control functions are $v_m$ and $v_f$, and depend on $x$, $a$ and $t$. In addition, $\chi_{G}$ denoted the characteristic function of the set $G$. \\
We have denoted by $\beta$ the positive function describing the birth rate that depends on $a$ and also on \[M=\int_{0}^{A}\lambda (a)mda\] where $\lambda$ is a fertility function of the male individuals. Thus the densities of newly born male and female individuals at location $x$ and at time $t$ are given respectively by $m(x,0,t)=(1-\gamma)N(x,t)$ and  $ f(x,0,t)=\gamma N(x,t)$ where
\[N(x,t)=\int_{0}^{A}\beta(a,M)f(x,a,t)da.\]
We assume that the fertility rates $\beta$, $\lambda$ and the mortality rates $\mu_f,$ $\mu_m$ satisfy the demographic properties:
\[
(H_1):
\left\{
\begin{array}{c}
\mu_m\geq 0, \quad \mu_f\geq 0\text{ a.e in } [0,A], \\\
\mu_m\in L_{loc}^{1}\left([0,A)\right),\quad \mu_f\in L_{loc}^{1}\left([0,A)\right), \\
\int_{0}^{A}\mu_m(a)da=+\infty,\quad \int_{0}^{A}\mu_f(a)da=+\infty
\end{array} 
\right..
\]
,
\[
(H_{2}):
\left\{
\begin{array}{c}
\beta\in C\left([0,A]\times \mathbb{R}\right)\\
\beta(a,p)\geq 0\text{ for every } (a,p)\in[0,A]\times \mathbb{R}\\
\beta(a,0)=0\text{ in } (0,A).
\end{array}
\right.
\]
The assumption $\beta(a,0)=0\text{ for } a\in (0,A)$ means that, the birth rate is zero if there are not male individuals.\\
We further assume that the birth function $\beta$ verifies the following assumption:
\[(H_{3}):\left\{
\begin{array}{c}
(i)-\text{There exists }b\in (0,A)\text{ such that }\beta(a,p)=0, \forall a\in (0,b),\\
 (ii)-\text{ there exists } L>0 \text{ such that }|\beta(a,p)-\beta(a,q)|\leq L|p-q| \text{ for all } p, q \in\mathbb{R}, \text{ a.e }a\in (0,A)\\
	(iii)-\text{there exists }\|\beta\|_{\infty}>0 \text{ such that } 0\leq\beta(a,p)\leq \|\beta\|_{\infty}. 
\end{array}
	\right.\]
	We suppose also that
\[
(H_{4}):
\left\{
\begin{array}{c}
\lambda\in C^{1}\left([0,A]\right),\\
\lambda\geq 0\text{ for every } a\in [0,A].
\end{array}
\right..
\]	
\[
(H_5):\left\{
\begin{array}{cc}
\lambda\mu_m\in L^{1}(0,A).\\
\end{array}
\right.
\]	
Finally concerning the initial data, we suppose that \[(m_0,f_0)\in \left(L^{\infty}(Q_A)\right)^2 \text{ and } m_0\text{, } f_0 \geq 0\text{ a.e in } Q_A.\]
With these ingredients in hand, we can state our main results.
  \begin{theorem}
Let us assume the assumptions $(H_1)-(H_2)-(H_3)-(H_4)-(H_5).$ If $a_1<b$ and $(a_1,a_2)\subset (b_1,b_2)$ for every time $T>a_1+A-a_2,$ and for every $(m_0,f_0)\in \left(L^2(Q_A)\right)^2,$ there exists $(v_m,v_f)\in L^2(\Xi)\times L^2(\Xi')$ such that the associated solution $(m,f)$ of the system $(\ref{1})$ verifies:
	\begin{equation}
	m(x,a,T)=0 \quad a.e\quad x\in \Omega\quad a\in (0,A),\label{rev}\end{equation}
	\begin{equation}
	f(x,a,T)=0 \quad a.e\quad x\in \Omega\quad a\in (0,A).\label{rev}\end{equation}
\end{theorem}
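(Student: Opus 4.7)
The plan is to apply Schauder's fixed point theorem to a map built from the null controllability of a \emph{linearized} auxiliary system obtained by freezing the nonlinear coupling through $M$, with the linear null controllability itself following by duality from an observability inequality for the backward adjoint.

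\textbf{Linearization.} Given $\tilde m$ in a suitable bounded convex subset $K\subset L^2(Q)$, set $\tilde M(x,t)=\int_0^A\lambda(a)\tilde m(x,a,t)\,da$ and replace $\beta(a,M)$ in $(\ref{1})$ by $\beta(a,\tilde M(x,t))$. By $(H_3)(iii)$ and $(H_4)$ the resulting coefficient is uniformly bounded and, by $(H_3)(ii)$, Lipschitz in $\tilde M$. This yields a linear auxiliary system in $(m,f)$ whose only cross-coupling appears in the age-$0$ boundary data.

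\textbf{Observability and linear null controllability.} The adjoint of the linearized system is a backward parabolic-transport system in $(p,q)$ with homogeneous conditions at $a=A$ and $t=T$ and a source in the $q$-equation of the form $\beta(a,\tilde M)\bigl[(1-\gamma)p(\cdot,0,\cdot)+\gamma q(\cdot,0,\cdot)\bigr]$. I would establish the observability inequality
\begin{equation*}
\int_\Omega\!\!\int_0^A\!\bigl(|p(x,a,0)|^2+|q(x,a,0)|^2\bigr)\,da\,dx \;\le\; C\!\left(\int_0^T\!\!\int_{a_1}^{a_2}\!\!\int_\omega p^2+\int_0^T\!\!\int_{b_1}^{b_2}\!\!\int_{\omega'} q^2\right).
\end{equation*}
The proof combines (i) a Carleman estimate for the parabolic operator $\partial_t+\partial_a-K\Delta+\mu$ on age-slabs where the coefficient $\beta(\cdot,\tilde M)$ is harmless, and (ii) a characteristics argument along the age-time lines that propagates observation from the strips $(a_1,a_2)$ and $(b_1,b_2)$ to the whole of $(0,A)$. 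The condition $T>a_1+A-a_2$ is exactly what is needed so that characteristics issuing from the male observation window sweep the whole age range in time $T$; the inclusion $(a_1,a_2)\subset(b_1,b_2)$ allows the $q$-observation to absorb the cross term produced by the trace $p(\cdot,0,\cdot)$. A standard HUM-type duality then delivers controls $(v_m,v_f)\in L^2(\Xi)\times L^2(\Xi')$ steering $(m,f)$ to zero at time $T$, with norms bounded uniformly in $\tilde M$ thanks to $(H_3)(iii)$.

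\textbf{Fixed point.} Define $\Lambda:\tilde m\mapsto m$, where $(m,f,v_m,v_f)$ is the solution of the controlled linearized problem (selecting, e.g., the control of minimal $L^2$-norm). Uniform observability constants give $\Lambda(K)\subset K$ for $K$ a sufficiently large closed ball of $L^2(Q)$; continuity of $\Lambda$ follows from $(H_3)(ii)$, and compactness from parabolic smoothing in $x$ together with the transport structure in $a$. Schauder's theorem yields a fixed point $m^*$; since then $M^*=\int_0^A\lambda(a)m^*\,da$ recovers the original nonlinear $\beta(a,M^*)$, the associated $(m^*,f^*,v_m^*,v_f^*)$ solves the nonlinear null control problem $(\ref{rev})$.

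\textbf{Main obstacle.} The principal technical difficulty is the observability estimate above for the coupled adjoint. Because the male boundary condition at $a=0$ depends on $f$, the $q$-adjoint inherits a source term carrying the trace $p(\cdot,0,\cdot)$, which is incompatible with a naive application of Carleman weights. Hypothesis $(H_3)(i)$, forcing $\beta\equiv 0$ on $(0,b)$, is the structural feature that makes absorption of this source tractable: one first treats the adjoints on $(b,A)\times\Omega$ (where the coupling vanishes and a classical Carleman estimate applies) and then propagates the bounds downward in age through the transport part, the geometric assumption $a_1<b$ ensuring that the male observation window sits in the "decoupled" age region.
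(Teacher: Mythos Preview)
Your overall architecture---linearize by freezing the coupling, prove observability for the adjoint, deduce linear null controllability by duality, then close with Schauder---is exactly the paper's strategy, and your reading of the role of $(H_3)(i)$ and of the time condition $T>a_1+A-a_2$ is essentially correct. The paper's observability argument does not invoke a global Carleman estimate directly but rather the classical final-state parabolic observability applied along characteristics; since that estimate is itself a consequence of Carleman, the difference is cosmetic.

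There is, however, a genuine gap in your fixed-point step. You set $\Lambda:\tilde m\mapsto m$ as a map on (a ball of) $L^2(Q)=L^2(\Omega\times(0,A)\times(0,T))$ and claim compactness ``from parabolic smoothing in $x$ together with the transport structure in $a$.'' But the operator $\partial_t+\partial_a-K\Delta$ diffuses only in $x$; the $(a,t)$-directions see pure transport, which yields no compactness gain. Uniform bounds in $L^2_{a,t}(H^1_0(\Omega))$ together with $(\partial_t+\partial_a)m\in L^2_{a,t}(H^{-1}(\Omega))$ do \emph{not} give compactness in $L^2(Q)$ by any standard Aubin--Lions argument, because the time-like derivative is along a single characteristic direction in the two-dimensional $(a,t)$-slab. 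As written, $\Lambda$ is not shown to be compact, and Schauder cannot be applied.

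The paper avoids this obstacle by changing the fixed-point variable: it works with $p(x,t)=\int_0^A\lambda(a)m(x,a,t)\,da\in L^2(Q_T)$ and defines the map $p\mapsto\int_0^A\lambda(a)m_\epsilon(p)\,da$ on $L^2(\Omega\times(0,T))$. Integrating the $m$-equation in $a$ against $\lambda$ shows that $Y(x,t)=\int_0^A\lambda m\,da$ solves a genuine heat equation $Y_t-K_m\Delta Y=R(x,t)-\int_0^A\mu_m\lambda m\,da$ in $Q_T$, to which the standard Aubin--Lions lemma applies and yields compactness in $L^2(Q_T)$. This is precisely where hypothesis $(H_5)$, namely $\lambda\mu_m\in L^1(0,A)$, is used (to bound the absorption term), and where $(H_4)$ enters (to control $\int_0^A\lambda' m\,da$). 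Your proposal never invokes $(H_5)$, which is a symptom of the gap. A secondary difference is that the paper first establishes \emph{approximate} controllability via a penalized functional $J_{\epsilon,\theta}$, applies Schauder at fixed $(\epsilon,\theta)$, and only then lets $(\epsilon,\theta)\to(0,0)$; this keeps all constants uniform and sidesteps measurable-selection issues for the minimal-norm control.
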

This result shows that one can act on the males in a locality $D_1$ and on the females in the locality $D_2$ ($D_1\cap D_2$ can be empty) to get the extinction of the total population at the final time $T>a_1+A-a_2.$
\begin{theorem} 
Let us assume the assumptions $(H_1)-(H_2)-(H_3)-(H_4)-(H_5).$\\ We  have:
\begin{itemize}
	\item[-(1)] Let $v_f=0$. For any $\varrho>0,$ for every time $T>A-a_2$ and for every $(m_0,f_0)\in \left(L^2(Q_A)\right)^2,$there exists a control $v_m\in L^2(\Theta)$ such that the associated solution $(m,f)$ of the system $(\ref{1})$ verifies:
	\begin{equation}m(x,a,T)=0 \quad a.e\quad x\in \Omega\quad a\in (\varrho,A)
	\end{equation} where $ \Theta =\omega\times(0,a_2)\times (0,T).$
	\item[-(2)] Let $v_m=0$. For every time $T>a_1+A-a_2$ and for every $(m_0,f_0)\in \left(L^2(Q_A)\right)^2,$ there exists a control $v_f\in L^2(\Xi)$ such that the associated solution $(m,f)$ of the system $(\ref{1})$ verifies:
	\begin{equation}f(x,a,T)=0 \quad a.e\quad x\in \Omega\quad a\in (0,A).	
	\label{rev}
	\end{equation}
	\end{itemize}
	\end{theorem}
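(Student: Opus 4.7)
The plan is to handle the two parts of Theorem~1.2 by the three-step scheme announced in the abstract: observability of an auxiliary linear adjoint, null controllability of the linear auxiliary system, then Schauder. The two parts require different linearizations, reflecting the fact that exactly one of the two controls is switched off.

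For part (2), since $v_m=0$ I would freeze the nonlinearity in the renewal term. Given $\tilde m$ in a suitable closed, bounded, convex subset of $L^2(Q)$, set $\tilde M(x,t)=\int_0^A\lambda(a)\tilde m(x,a,t)\,da$ and replace $\beta(a,M)$ by $\beta(a,\tilde M)$. The linearized $f$-equation
\[
 f_t+f_a-K_f\Delta f+\mu_f f=\chi_{\Xi'}v_f,\qquad f(x,0,t)=\gamma\int_0^A\beta(a,\tilde M)f(x,a,t)\,da
\]
then decouples from $m$, and the key step is an observability inequality for its adjoint, valid for $T>a_1+A-a_2$ and uniform in $\tilde M$: combine a global Carleman estimate in $x$ for the parabolic part with the characteristic argument used for Theorem~1.1 to handle the transport-plus-renewal structure; assumption $(H_3)(iii)$ keeps the nonlocal boundary term bounded independently of $\tilde M$. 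This yields a linear control $v_f=v_f(\tilde m)$ driving $f$ to zero at time~$T$. Once $f$ is known, the uncontrolled $m$-equation with boundary datum $(1-\gamma)\int_0^A\beta(a,\tilde M)f\,da$ is solved by the method of characteristics combined with the parabolic semigroup, producing a map $\Lambda:\tilde m\mapsto m$. A fixed point of $\Lambda$, obtained via Schauder, provides the desired nonlinear control.

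For part (1), the female equation is uncontrolled, so $f$ and the male-birth term $N(x,t)$ are coupled to $m$ through $\beta(a,M)$. I would again linearize by freezing: given $(\tilde m,\tilde f)$ in a bounded convex set, set $\tilde N(x,t)=\int_0^A\beta(a,\tilde M)\tilde f(x,a,t)\,da$ and look for $v_m\in L^2(\Theta)$ steering the linear $m$-equation
\[
 m_t+m_a-K_m\Delta m+\mu_m m=\chi_\Theta v_m,\qquad m(x,0,t)=(1-\gamma)\tilde N(x,t),
\]
to zero on $\Omega\times(\varrho,A)$ at time~$T$. The condition $T>A-a_2$ is precisely what guarantees that every characteristic of $m$ reaching an age $a\in(\varrho,A)$ at time $T$ spends a strictly positive amount of time inside the age slab $(0,a_2)$ while staying in $(0,T)$, so that $\Theta=\omega\times(0,a_2)\times(0,T)$ is a geometrically reasonable control zone; the exclusion of ages below $\varrho$ prevents the duration of this visit from collapsing. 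The observability inequality for the corresponding adjoint is obtained by the same Carleman-plus-characteristics scheme. The female component is then updated by solving the nonlinear renewal equation for $f$ driven by the updated $M$, and a Schauder fixed point closes the loop.

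The main obstacle in both parts is the observability inequality for the frozen adjoint with the nonlocal boundary term $\int_0^A\beta(a,\tilde M)\,\cdot\,da$, and in particular the need for an observability constant that is uniform in $\tilde M$; without such uniformity the ball invariant under $\Lambda$ cannot be fixed a priori and Schauder does not apply. Assumption $(H_3)$ is exactly what makes the absorption of the nonlocal term and the Lipschitz continuity of $\Lambda$ possible, while $(H_5)$ is what keeps $\tilde M$ under control through the iteration despite the singularity of $\mu_m$ near $a=A$. Compactness of $\Lambda$, which relies on parabolic smoothing in $x$ and on the averaging in $a$ in the renewal term, is a second delicate point but is handled by standard Aubin--Lions-type arguments.
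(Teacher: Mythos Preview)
Your overall three-step scheme (observability $\Rightarrow$ linear null controllability $\Rightarrow$ Schauder) matches the paper, and part~(2) is essentially the same as what the authors do. Two substantive differences deserve comment.

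\textbf{Linearization in part~(1).} You freeze both $\tilde m$ and $\tilde f$, so that the boundary datum $m(x,0,t)=(1-\gamma)\tilde N(x,t)$ is a fixed source and the $m$-equation decouples completely; the adjoint is then a single equation for $h$. The paper freezes only the scalar $p=\int_0^A\lambda m\,da$ and keeps the cascade: the uncontrolled $f$-equation (with $\beta(a,p)$) is solved first and feeds the boundary of $m$. Consequently the paper's adjoint is the \emph{coupled} system for $(h,g)$ with $g_T=0$, and the observability inequality (their Proposition~4.1) bounds $\int h^2(\cdot,\cdot,0)+\int g^2(\cdot,\cdot,0)$ by $\int_\Theta h^2$; the term $\int g^2(\cdot,\cdot,0)$ is needed because $f_0$ appears in the duality pairing. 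Your decoupled route is a legitimate alternative, but you then have to absorb the extra term $\int_0^T\!\int_\Omega(1-\gamma)\tilde N\,h(x,0,t)\,dx\,dt$ in the duality, using the estimate of $h(x,0,t)$ by the observation; you do not mention this, and it is exactly where the restriction $h_T=0$ on $(0,\varrho)$ is used.

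\textbf{Choice of fixed-point variable and compactness.} Here there is a genuine gap. You set up $\Lambda$ on the full density, $\tilde m\mapsto m$ in $L^2(Q)$ (or on the pair $(\tilde m,\tilde f)$). But the $m$-equation is transport in $a$ coupled with diffusion in $x$; only the directional derivative $m_t+m_a$ is controlled in $H^{-1}$, not $m_t$ and $m_a$ separately, so a straight Aubin--Lions argument in $L^2(Q)$ does not apply, and ``parabolic smoothing in $x$ plus averaging in $a$'' is not enough as stated. The paper avoids this by taking as fixed-point variable the scalar moment $p=\int_0^A\lambda(a)m\,da\in L^2(Q_T)$: integrating the $m$-equation against $\lambda(a)$ yields a genuine parabolic equation $Y_t-K_m\Delta Y=R(x,t)-\int_0^A\mu_m\lambda m\,da$ for $Y=\Lambda(p)$, to which Aubin--Lions applies directly (this is where $(H_4)$ and $(H_5)$ enter, to make $R$ and the $\mu_m$-term bounded in $L^2(Q_T)$). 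Without this reduction your compactness claim is unjustified; you should either pass to the moment as the paper does or supply a separate compactness argument for the full density.
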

	\begin{remark}
		In fact, the assumption $H_3-(i),$ is not a mandatory condition for the proof of Theorem 1.2-(1).
			\end{remark}
	In practice this study takes place for example in the fight against malaria.
Malaria is a serious disease (In 2017 there were 219 million cases in the World $\cite{report}$)and our work takes its importance in the strategy to fight against it.\\
In fact Malaria is a vector-borne disease transmitted by an infective female anopheles mosquito.
A malaria control strategy in Brazil or Burkina Faso consists of releasing genetically modified male mosquitoes (precisely sterile males)  in the nature in order to reduce reproduction since females mate only once in their life cycle.\\
In the theoretical framework, very few authors have studied control problems of two-sex structured population dynamics model.\\
The control problems of coupled systems of population dynamics models take an intense interest and are widely investigated in many papers. Among them we have $\cite{1}$, $\cite{2}$, $\cite{fine}$ and the references therein. In fact, in $\cite{1}$ the authors studied a coupled reaction-diffusion equations describing interaction between a prey population and a predator one. The goal of this work is to look for a suitable control supported on a small spatial subdomain which guarantees the stabilization of the predator population to zero. In $\cite{2}$, the objective was different. More precisely, the authors consider an age-dependent prey-predator system and they prove the existence and uniqueness of an optimal control (called also "optimal effort") which gives the maximal harvest via the study of the optimal harvesting problem associated to their coupled model.\\
In $\cite{yu}$ He and Ainseba study the null controllability of a butterfly population by acting on eggs, larvas and female moths in a small age interval.\\
In $\cite{fine},$ the authors analyze the growth of a two-sex population with a fixed age-specific sex ratio without diffusion. The model is intended to give an insight into the dynamics of a population where the mating process takes place at random choice and the proportion between females and males is not influenced by environmental or social factors, but only depends on a differential mortality or on a possible transition from one sex to the other (e.g. in sequential hermaphrodite species).
In $\cite{fine}$ first, a basic model asymptotically linear, is considered and its ergodicity is studied. Survival thresholds and their dependence on the sex ratio are then analysed, in connection with the optimal sex ratio to guarantee survival.
A further model including logistic effect is also considered and discussed in connection with existence and stability of steady states.\\ 
In this work, we study the null controllability of a nonlinear two-sex population dynamics structured model with diffusion. Unlike the model treated in $\cite{fine},$ we consider a nonlinear cascade system with  two different fertility rates: the fertility rate of the male $\lambda$ and the fertility rate $\beta$ of the female that depend on the total population of the fertile males. It should be noted that the mortality and the diffusion coefficient also depend on the sex of the individual.\\ We use  the technique of $(\cite{dy})$ combining final-state observability estimates with the use of characteristics to establish the observability inequalities necessary for the null controllability property of the auxiliary systems. Roughly, in our method we first study an approximate null controllability result for an auxiliary cascade system. Afterwards, we prove the null controllability result for the system $(\ref{1})$ by means of Schauder's fixed point theorem.\\
The remainder of this paper is as follows:  In Section 2 we study an approximate null controllability result for some associated auxiliary model. Section 3 is devoted to the proof of the Theorem 1.1. Next, we prove the Theorem 1.2 in section 4.
 	  	  \section{Approximate null controllability of an auxiliary coupled system}
 	  	  In this section we study of an auxiliary system obtained from the system $(\ref{1}).$\\
 	  	  Let $p$ be a $L^{2}(Q_T)$ function, we define the auxiliary system given by:
\begin{equation}
\left\{ 
\begin{array}{ccc}
m_t+m_a-K_m\Delta m+\mu_m m&=\chi_{\Xi}v&\text{ in }Q ,\\ 
f_t+f_a-K_f\Delta f+\mu_f f&=\chi_{\Xi'}u&\text{ in }Q,\\ 
m(\sigma,a,t)=f(\sigma,a,t)&=0&\text{ on }\Sigma, \\
m(x,a,0)=m_0\quad f(x,a,0)=f_0&&\text{ in }Q_A,\\
m(x,0,t)=(1-\gamma)\int_{0}^{A}\beta(a,p)fda,\quad f(x,0,t)=\gamma \int_{0}^{A}\beta(a,p)fda&& \text{ in }Q_T.\\
\end{array}
\right.
\label{2}
\end{equation}
We have the following result.
\begin{proposition}
	Under the assumption $(H_1)-(H_2)-(H_3)-(H_4),$ for any fixed $(m_0,f_0)\in\left(L^{2}(Q_A)\right)^2\text{ and }(\chi_{\Xi}v,\chi_{\Xi'}u)\in \left(L^{2}(Q)\right)^2,$ and for any fixed $p\in L^2(Q_T)$  the system $(\ref{2})$ admits a unique solution \[(m,f)\in \left(L^2\left((0,A)\times(0,T);H^{1}_{0}(\Omega)\right)\right)^2\] 
 	and we have the following estimations:
 	\[\|m\|_{L^2\left((0,A)\times(0,T);H^{1}_{0}(\Omega)\right)}\leq K\left(\|f_0\|_{L^2(Q_A)}+\|m_0\|_{L^2(Q_A)}+\|\chi_{\Xi}v\|_{L^2(Q)}+\|v\|_{L^2(\Xi)}\right)\]
 	and 
 	\[\|f\|_{L^2\left((0,A)\times(0,T);H^{1}_{0}(\Omega)\right)}\leq C\left(\|f_0\|_{L^2(Q_A)}+\|u\|_{L^2(\Xi')}\right),\] where $K$ and $C$ are positive constants independent of $p.$
 \end{proposition}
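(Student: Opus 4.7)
The plan is to exploit the cascade structure of system $(\ref{2})$: since the birth terms for both $m$ and $f$ depend only on $f$ (through $\int_0^A \beta(a,p) f\, da$) and not on $m$, the second equation is decoupled and can be solved first. Once $f$ is in hand, the equation for $m$ becomes a linear age-structured heat equation with a known source at $a=0$, and the analysis is standard.

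\textbf{Step 1: Existence and uniqueness for $f$.} The $f$-equation is a linear age-structured parabolic problem with a nonlocal boundary condition at $a=0$. I would set it up as a fixed-point problem: given $g\in L^{2}(Q_T)$, let $\tilde f[g]$ be the solution of the system with $f_t+f_a-K_f\Delta f+\mu_f f=\chi_{\Xi'}u$, $f|_{\partial\Omega}=0$, $f(\cdot,a,0)=f_0$, and $\tilde f(x,0,t)=g(x,t)$. This is a classical problem solved via the method of characteristics combined with the semigroup generated by $K_f\Delta$ with Dirichlet data, giving a solution in $L^2((0,A)\times(0,T);H^1_0(\Omega))$. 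Then define the map $\Phi(g)(x,t)=\gamma\int_0^A \beta(a,p)\,\tilde f[g](x,a,t)\,da$. Using $(H_3)(iii)$ one has $|\Phi(g_1)-\Phi(g_2)|\le \gamma\|\beta\|_\infty \int_0^A |\tilde f[g_1]-\tilde f[g_2]|\,da$, and the Duhamel representation along characteristics shows that $\Phi$ is a contraction on $L^2(\Omega\times(0,\tau))$ for $\tau$ small, or unconditionally after a Gronwall-type argument in $t$; iterating on successive time slabs of length $\tau$ yields a unique fixed point on $(0,T)$. This fixed point defines $f$.

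\textbf{Step 2: Energy estimate for $f$.} Multiplying the $f$-equation by $f$ and integrating over $\Omega\times(0,A)$, the transport part contributes $\tfrac12\tfrac{d}{dt}\|f\|^2_{L^2(Q_A)}+\tfrac12\int_\Omega|f(x,A,t)|^2dx-\tfrac12\int_\Omega|f(x,0,t)|^2dx$, the diffusion part yields $K_f\|\nabla f\|^2$, and $\mu_f f^2\ge 0$ drops. The trace at $a=0$ is handled by Cauchy--Schwarz and $(H_3)(iii)$: $\int_\Omega|f(x,0,t)|^2dx\le \gamma^2\|\beta\|_\infty^2 A\,\|f(\cdot,\cdot,t)\|^2_{L^2(\Omega\times(0,A))}$. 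A Gronwall lemma and integration in $t$ then give the claimed bound with constant $C$ depending only on $\|\beta\|_\infty$, $\gamma$, $A$, $T$ — crucially \emph{not} on $p$, since only the uniform bound in $(H_3)(iii)$ is used.

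\textbf{Step 3: Existence, uniqueness and estimate for $m$.} With $f$ fixed, the boundary datum $m(x,0,t)=(1-\gamma)\int_0^A\beta(a,p)f\,da$ is a determined function lying in $L^2(Q_T)$ with norm bounded by $(1-\gamma)\|\beta\|_\infty A^{1/2}\|f\|_{L^2(Q_A\times(0,T))}$. The $m$-equation is then a standard linear age-space structured heat equation with Dirichlet boundary condition, known initial data $m_0$, source $\chi_\Xi v$, and known birth datum. Existence and uniqueness follow from the same characteristics-plus-semigroup construction as above (no fixed point needed this time), and the energy estimate obtained by the same multiplication procedure yields the bound on $\|m\|_{L^2((0,A)\times(0,T);H^1_0(\Omega))}$ in terms of $\|m_0\|$, $\|v\|_{L^2(\Xi)}$ and $\|f\|_{L^2(Q_A\times(0,T))}$, which after substituting the Step~2 bound gives the stated inequality with $K$ independent of $p$.

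\textbf{Main obstacle.} The only nontrivial point is Step~1: making the contraction argument work despite the nonlocal boundary condition at $a=0$. Once the uniform bound $\|\beta\|_\infty$ from $(H_3)(iii)$ is invoked to dominate the trace term, everything reduces to a well-posedness argument that is by now standard for age-structured diffusion models; the rest of the proof is a routine energy estimate whose constants, by construction, involve $\beta$ only through $\|\beta\|_\infty$ and therefore do not depend on $p$.
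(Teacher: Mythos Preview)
Your proposal is correct and follows essentially the same route as the paper: exploit the cascade structure to solve the decoupled $f$-equation first, derive an energy bound whose constants depend on $\beta$ only through $\|\beta\|_\infty$ (hence not on $p$), and then treat the $m$-equation as a linear problem with known birth datum. The only cosmetic differences are that the paper cites \cite{ains} for existence of $f$ rather than sketching a contraction argument, and it obtains the energy estimate via the substitution $y=e^{-\lambda_0 t}f$ with $\lambda_0$ chosen large instead of invoking Gronwall directly---these two devices are equivalent.
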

\begin{proof}
It is obvious that the following system admits a unique solution according to the already existing results.
Indeed, as $\beta\in L^{\infty}((0,A)\times \mathbb{R})$ , then for every $p\in L^{2}(Q_T),$ the system:
 \begin{equation}
\left\{ 
\begin{array}{ccc}
f_t+f_a-K_f\Delta f+\mu_f f&=\chi_{\Xi'} u&\text{ in }Q,\\ 
f(\sigma,a,t)&=0&\text{ on }\Sigma, \\
f(x,a,0)=f_0&&\text{ in }Q_A,\\
\quad f(x,0,t)=\gamma \int_{0}^{A}\beta(a,p)fda&& \text{ in }Q_T\\
\end{array}
\right.
\label{22}
\end{equation}
admits a unique solution $f$ for all $f_0\in L^2(Q_A)$ and $\chi_{\Xi'}u\in L^2(Q)$ (see $\cite{ains}$).\\
We denote by $y=e^{-\lambda_0 t}f.$ The function $y$ verifies the following system
 \begin{equation}
\left\{ 
\begin{array}{ccc}
y_t+y_a-K_f\Delta y+(\lambda_0+\mu_f) y&=e^{-\lambda_0 t}\chi_{\Xi'} u &\text{ in }Q,\\ 
y(\sigma,a,t)&=0&\text{ on }\Sigma, \\
y(x,a,0)=f_0&&\text{ in }Q_A,\\
\quad y(x,0,t)=\gamma \int_{0}^{A}\beta(a,p)yda&& \text{ in }Q_T\\
\end{array}
\right.
\label{2o2}
\end{equation}
Multiplying the first equation of $(\ref{2o2})$ by $y$ and integrating the result over $Q,$ we obtain
\[
\dfrac{1}{2}\int_{0}^{A}\int_{\Omega}y^{2}(x,a,T)dxda+\dfrac{1}{2}\int_{0}^{T}\int_{\Omega}y_{\epsilon}^{2}(x,A,t)dxdt+\]\[K\int_{0}^{T}\int_{0}^{A}\int_{\Omega}\| \nabla y\|^{2}_{L^2(\Omega)}dxdadt+\int_{0}^{T}\int_{0}^{A}\int_{\Omega}(\mu_m+\lambda_0)y^{2} dxdadt\]
\begin{equation}
=\dfrac{1}{2}\|f_0\|^{2}_{L^2(\Omega\times (0,A))}+\gamma\int_{0}^{T}\int_{\Omega}\left(\int_{0}^{A}\beta(a,p)yda\right)^2 dxdt+\int_{0}^{T}\int_{0}^{A}\int_{\Omega}e^{-\lambda_0 t}\chi_{\Xi}uydxdadt
\end{equation}
Using Young inequality, Cauchy Schwarz inequality and the fact that $\beta$ is $L^\infty,$ we get that:
\[\gamma\int_{0}^{T}\int_{\Omega}\left(\int_{0}^{A}\beta(a,p)yda\right)^2dxdt+\int_{0}^{T}\int_{0}^{A}\int_{\Omega}\chi_{\Xi'}uydxdadt\]\[\leq A\|\beta\|^{2}_{\infty}\|y\|^{2}_{L^2(Q)}+\dfrac{1}{2}\|y\|^{2}_{L^2(Q)}+\dfrac{ 1}{2}\|u\|^{2}_{L^2(\Xi')}\]
Therefore, choosing $\gamma_0=(A\|\beta\|^{2}_{\infty}+3/2),$ we get:
\[
	\int_{0}^{A}\int_{\Omega}y^2(x,a,T)dxda+\int_{0}^{T}\int_{0}^{A}\int_{\Omega}(1+\mu_f )y^2dxdadt+K_f\int_{0}^{T}\int_{0}^{A}\int_{\Omega}|\nabla y|^2dxdadt\]
	\begin{equation}
		\leq 	\dfrac{1}{2}\|f_0\|^{2}_{L^2(\Omega\times (0,A))}+\dfrac{ 1}{2}\|u\|^{2}_{L^2(\Xi')}	.
		\end{equation}	
Then 
\[e^{-T(A\|\beta\|^{2}_{\infty}+3/2)}\left(\int_{0}^{T}\int_{0}^{A}\int_{\Omega}f^2dxdadt+K_f\int_{0}^{T}\int_{0}^{A}\int_{\Omega}|\nabla f|^2dxdadt\right)\] 
\[\leq 	\dfrac{1}{2}\|f_0\|^{2}_{L^2(\Omega\times (0,A))}+\dfrac{ 1}{2}\|u\|^{2}_{L^2(\Xi')}	.\]
Finally
\[ \|f\|_{L^2(0,A)\times (0,T);H^{1}_{0}(\Omega))}\leq \dfrac{1}{2(\min\{1,K_f\})e^{-T(A\|\beta\|^{2}_{\infty}+3/2)}}\left(\|f_0\|_{L^2(Q_A)}+\|u\|_{L^2(\Xi')}\right).\]
Likewise, as the boundary condition in age of the state $m$ is $(1-\gamma)\int_{0}^{A}\beta(a,p)fda$ then we have the existence and the uniqueness of $m$ and there exists $K>0$ independent of $p$ such that $m$ verify: 
\[ \|m\|_{L^2\left((0,A)\times(0,A);H^{1}_{0}(\Omega)\right)}\leq K\left(\|f_0\|_{L^2(Q_A)}+\|m_0\|_{L^2(Q_A)}+\|v\|_{L^2(\Xi)}+\|u\|_{L^2(\Xi')}\right).\]
\end{proof}
Next, we will establish an observability inequality.
\subsection{Observability inequality}
The adjoint system of the auxiliary system $(\ref{2})$ is given by:
\begin{equation}
\left\{ 
\begin{array}{ccc}
-n_t-n_a-K_m\Delta n+\mu_m n&=0 &\text{ in }Q ,\\ 
-l_t-l_a-K_f\Delta l+\mu_f l&=(1-\gamma)\beta(a,p)n(x,0,t)+\gamma\beta(a,p)l(x,0,t)&\text{ in }Q,\\ 
n(\sigma,a,t)=l(\sigma,a,t)&=0&\text{ on }\Sigma, \\
n(x,a,T)=n_T\quad l(x,a,T)=l_T&&\text{ in }Q_A,\\
n(x,A,t)=0,\quad l(x,A,t)=0&& \text{ in }Q_T.\\
\end{array}
\right.
\label{3}
\end{equation}
The main idea in this part is to establish an observability inequality of the adjoint system that will allow us to prove the approximate null controllability of the system $(\ref{2})$.	\\The basic idea for establishing this inequality is the estimation of he non-local terms. To this end, let us start by formulating a representation of the solution of the adjoint cascade system by using semi-group theory and characteristic's method.\\For every $(n_T,l_T)\in (L^2(Q_A))^2, $ under the assumptions $(H_1)$ and $(H_2),$ the coupled system $(\ref{3})$ admits a unique solution $(n,l).$ Moreover integrating along the characteristic lines, the solution $(n,l)$ of $(\ref{3})$ is given by:
		\begin{equation}
		n(t) = 
		\begin{dcases}
			\dfrac{\pi_1(a+T-t)}{\pi_1(a)}e^{(T-t) K_m\Delta} n_T(x,a+T-t)\text{ if } T-t\leq A-a, \\
			0 \text{ if } A-a<T-t \label{alp}
		\end{dcases}
		\end{equation}
		and
		\begin{equation}
		l(t) = 
		\begin{dcases}
			\dfrac{\pi_2(a+T-t)}{\pi_2(a)}e^{(T-t) K_f\Delta}l_T(x,a+t-T)\\
			+\int_{t}^{T}\dfrac{\pi_2(a+s-t)}{\pi_2(a)}e^{(s-t) K_f\Delta}\beta(a+s-t,p(x,s))\left((1-\gamma)n(x,0,s)+\gamma l(x,0,s)\right)ds\text{ if } T-t\leq A-a, \\
			\int_{t}^{t+A-a}\dfrac{\pi_2(a+s-t)}{\pi_2(a)}e^{(s-t) K_f\Delta}\beta(a+s-t,p(x,s))\left((1-\gamma)n(x,0,s)+\gamma l(x,0,s)\right)ds \text{ if } A-a<T-t, \label{alp}
		\end{dcases}
				\end{equation}
				where $\pi_1(a)=e^{-\int_{0}^{a}\mu_m(r)dr}\text { , }\pi_2(a)=e^{-\int_{0}^{a}\mu_f(r)dr}\text{ and } e^{tK_m \Delta}$ is the semi-group of $-K_m\Delta$ with Dirichlet boundary condition. 
				Under the assumptions $(H_1)-(H_2)-(H_3)$ and $(H_4)$ the couple $(n,l)$ of the system $(\ref{3})$ verifies the following result.
\begin{theorem}
			Under the assumptions of the Theorem 1.1, there exists a constant $C_{T}>0$ independent of $p$ such that the solution $(n,l)$ of the system $(\ref{3})$ verifies:
					\begin{equation}	
					\int_{0}^{A}\int_{\Omega}l^2(x,a,0)dxda+\int_{0}^{A}\int_{\Omega}n^2(x,a,0)dxda\leq C_{T}\left(\int_{\Xi}n^2(x,a,t)dxdadt+\int_{\Xi'}l^2(x,a,t)dxdadt\right).\label{12}\end{equation}
		\end{theorem}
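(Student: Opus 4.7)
The plan is to exploit the triangular structure of (\ref{3}): since $n$ is uncoupled from $l$, I would first establish observability for $n$ alone, then bootstrap to $l$. The argument naturally passes through auxiliary estimates on the age-zero traces $n(\cdot,0,\cdot)$ and $l(\cdot,0,\cdot)$, which are exactly where the coupling enters the $l$-equation.

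For the $n$-component I would fix a characteristic starting at $(a_0,0)$ with $a_0\in(0,A-T)$ (elsewhere (\ref{alp}) forces $n(\cdot,a_0,0)=0$). Setting $w(x,s):=\pi_1(a_0)\,n(x,a_0+s,s)/\pi_1(a_0+s)$ and reversing time reduces $w$ to a forward heat equation on $\Omega$ with Dirichlet boundary and bounded potential. Because $a_0<A-T\le a_2$ (using $T>A-a_2$, which is implied by the standing hypothesis $T>a_1+A-a_2$), the observation window $\{s:a_0+s\in(a_1,a_2)\}\cap(0,\min(T,A-a_0))$ has positive length, and the Fursikov--Imanuvilov final-state observability inequality for the heat equation gives
\begin{equation*}
\int_{\Omega}n^2(x,a_0,0)\,dx\le C\int_{\max(0,a_1-a_0)}^{\min(T,a_2-a_0)}\int_{\omega}n^2(x,a_0+s,s)\,dx\,ds.
\end{equation*}
Integrating in $a_0$ and changing variables $(a_0,s)\mapsto(a_0+s,s)$ produces the $n$-part of (\ref{12}). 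A parallel argument applied to the characteristic emanating from $(0,t)$ yields the trace bound
\begin{equation*}
\int_{0}^{T}\int_{\Omega}n^2(x,0,t)\,dx\,dt\le C_T\int_{\Xi}n^2,
\end{equation*}
and this is where the sharper hypothesis $T>a_1+A-a_2$ enters, the residual range $t\in(T-a_1,T)$ being handled directly through the representation formula and heat-semigroup smoothing.

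For the $l$-component I would regard the $l$-equation of (\ref{3}) as a backward heat equation with source $S(x,a,t)=\beta(a,p)\bigl[(1-\gamma)n(x,0,t)+\gamma l(x,0,t)\bigr]$. Running the same characteristic machinery on the observation set $\Xi'$ -- whose age window $(b_1,b_2)$ contains $(a_1,a_2)$ by assumption -- and using $|\beta|\le\|\beta\|_\infty$ from $(H_3)(iii)$, one obtains
\begin{equation*}
\int_{0}^{A}\int_{\Omega}l^2(x,a,0)\,dx\,da\le C_T\Bigl(\int_{\Xi'}l^2+\int_{0}^{T}\int_{\Omega}n^2(x,0,t)\,dx\,dt+\int_{0}^{T}\int_{\Omega}l^2(x,0,t)\,dx\,dt\Bigr).
\end{equation*}
The $n$-trace is already under control; the remaining $l$-trace I would handle through the explicit Volterra representation of $l(x,0,t)$ in (\ref{alp}). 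Thanks to $(H_3)(i)$, the kernel vanishes for $s-t<b$, so the Volterra equation has a positive delay $b$. A backward-in-time bootstrap in slabs of length $b$ -- starting on $(T-b,T)$, where the source disappears and only the heat-observable $l_T$-part survives, then propagating successively to $(T-2b,T-b)$, etc. -- terminates after $\lceil T/b\rceil$ steps and gives a bound on $\|l(\cdot,0,\cdot)\|_{L^2(Q_T)}^2$ by $\int_{\Xi'}l^2+\int_{\Xi}n^2$. Chaining the three displayed bounds produces (\ref{12}).

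The main obstacle is the self-coupling through the $l$-trace: the $l$-source contains $\gamma\beta(a,p)l(x,0,t)$, so any naive estimate for $\|l(\cdot,0,\cdot)\|$ feeds into itself and a straightforward Gronwall argument does not close. The age-delay $b>0$ supplied by $(H_3)(i)$ is precisely what makes the backward-slab iteration finite. Uniformity of $C_T$ in $p$ is delivered by the $p$-free bound $\|\beta\|_\infty$ of $(H_3)(iii)$, while the hypotheses $a_1<b$ and $(a_1,a_2)\subset(b_1,b_2)$ ensure both that the $n$-observation reaches below the fertility threshold and that the source information propagates into a region where $l$ itself is observed, so the whole chain of inequalities closes with a constant independent of $p$.
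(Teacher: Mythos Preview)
Your overall architecture---decouple $n$ via characteristics and heat observability, then feed the $n$-trace into the $l$-estimate---matches the paper's. The gap is in how you close the trace estimates.

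You claim to bound the \emph{full} traces $\int_0^T\!\int_\Omega n^2(x,0,t)$ and $\int_0^T\!\int_\Omega l^2(x,0,t)$ by the observations, but this is not possible. For $t$ close to $T$ the backward characteristic from $(0,t)$ has no time to enter the age window $(a_1,a_2)$ (resp.\ $(b_1,b_2)$); concretely, the heat observability constant $c_1 e^{c_2/(t_1-t_0)}$ blows up as the available window shrinks. Your sentence ``the residual range $t\in(T-a_1,T)$ being handled directly through the representation formula and heat-semigroup smoothing'' does not work: on that range $n(x,0,t)=\pi_1(T-t)e^{(T-t)K_m\Delta}n_T(x,T-t)$ depends on the arbitrary data $n_T$ at small ages, and semigroup contractivity gives no bound in terms of $\int_\Xi n^2$. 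The same obstruction kills the first step of your Volterra bootstrap for $l$: on $(T-b,T)$ you say ``only the heat-observable $l_T$-part survives'', but for $t>T-b_1$ the characteristic from $(0,t)$ never reaches age $b_1$ before time $T$, so this part is not observable either. The delay $b$ from $(H_3)(i)$ does not rescue the final slab.

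The paper sidesteps this entirely. It never needs the full $(0,T)$ traces: for $a\in(a_0,A)$ the representation of $l(x,a,0)$ only integrates the traces over $s\in(0,A-a)\subset(0,A-a_0)$, and Lemma~2.2 chooses $a_0$ so that $A-a_0<T-(a_1+\kappa)$ for some $\kappa>0$. Then Proposition~2.2, which gives $\int_0^{T-\eta}\!\int_\Omega n^2(x,0,t)\le C\int_\Xi n^2$ and the analogous bound for $l$ only for $\eta>a_1$ (resp.\ $\eta>b_1$), suffices. The role of $(H_3)(i)$ in the paper is not to create a Volterra delay for $l(x,0,t)$ but simply to make the $l$-equation source-free on the age band $(0,b)$, so that the same characteristic argument used for $n(x,0,t)$ applies verbatim to $l(x,0,t)$ on that band. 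Once you replace your full-interval trace bounds by the truncated ones on $(0,T-(a_1+\kappa))$ and split $\int_0^A l^2(\cdot,a,0)\,da$ at $a_0$ (using the source-free estimate on $(0,a_0)$), the argument closes with a constant independent of $p$.
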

		For proof of the Theorem 2.1 we state the following approximation of the non-local terms.
		\subsubsection{Estimations of the non-local terms}
		\begin{proposition}
			Under the assumptions of the Theorem 1.1, for every $\eta$ such that $a_1<\eta<T,$ there exists a positive constant $C$ such that the following inequality:
			\begin{equation}
\int_{0}^{T-\eta}\int_{\Omega}n^2(x,0,t)dxdt\leq C\int_{0}^{T}\int_{a_1}^{a_2}\int_{\omega}n^2(x,a,t)dxdadt\label{d0d}
\end{equation}
holds.\\
In particular, for every $\varrho>0,$ if $a_1=0$ and $n_T=0\text{ a.e in } \Omega\times(0,\varrho),$ there is a constant $C_{\varrho,T}$ such that:
\begin{equation}
\int_{0}^{T}\int_{\Omega}n^2(x,0,t)dxdt\leq C_{\varrho,T}\int_{0}^{T}\int_{0}^{a_2}\int_{\omega}n^2(x,a,t)dxadt.\label{14}
\end{equation}
Moreover, if $b_1<b,$ for every $\eta$ such that $b_1<\eta<T,$ there exists a positive constant such that the following inequality
\begin{equation}
\int_{0}^{T-\eta}\int_{\Omega}l^2(x,0,t)dxdt\leq C\int_{\Xi'}l^2(x,a,t)dxdadt \label{dd}
\end{equation}
holds.
\end{proposition}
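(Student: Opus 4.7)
The plan is to combine the explicit characteristic representation (2.4)--(2.5) with the classical Fursikov--Imanuvilov final-state observability for the Dirichlet heat equation. The crucial observation is that evaluating (2.4) at the two points $(x,0,t)$ and $(x,a,t+a)$, which lie on the same characteristic and share the terminal datum $n_T(\cdot,T-t)$, yields after elimination the identity
\[
n(x,0,t)\;=\;\pi_1(a)\,e^{aK_m\Delta}\,n(x,a,t+a),\qquad 0\le a\le \min\bigl(a_2,\,T-t\bigr).
\]
Equivalently, for each fixed $t$ the auxiliary function $u_t(x,\tau):=\pi_1(T-t)\,e^{\tau K_m\Delta}n_T(x,T-t)$ solves the forward heat equation $u_\tau-K_m\Delta u=0$ in $\Omega$ with homogeneous Dirichlet boundary condition; its value at the terminal heat-time $\tau=T-t$ equals $n(x,0,t)$, and its value at $\tau=T-t-a$ equals $\pi_1(a)\,n(x,a,t+a)$. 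Observing $n$ on $\omega\times(a_1,a_2)$ therefore amounts to observing $u_t$ in $\omega$ on the heat-time window $\bigl(T-t-\bar a(t),\,T-t-a_1\bigr)$, with $\bar a(t):=\min(a_2,T-t)$.

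I would then apply Fursikov--Imanuvilov observability to $u_t$ on that window to obtain a constant $C$ depending only on $\bar a(t)-a_1$, $\omega$, $\Omega$ and $K_m$ such that $\|u_t(\cdot,T-t-a_1)\|_{L^2(\Omega)}^2\leq C\int\!\int_\omega u_t^2$. The contraction property of the heat semigroup then gives $\|u_t(\cdot,T-t)\|\leq\|u_t(\cdot,T-t-a_1)\|$, and after the substitution $a=T-t-\tau$ one arrives at
\[
\int_\Omega n^2(x,0,t)\,dx\;\leq\;C\int_{a_1}^{a_2}\int_\omega n^2(x,a,t+a)\,dx\,da.
\]
The hypothesis $a_1<\eta$ is precisely what forces $\bar a(t)-a_1\geq\min(a_2,\eta)-a_1>0$ uniformly in $t\in(0,T-\eta)$, so $C$ can be chosen independent of $t$. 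Integrating in $t\in(0,T-\eta)$ and applying Fubini with the substitution $\tau=t+a$ on the right transforms the double integral into one over a subset of $\Xi$, proving (2.11).

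The remaining inequalities are variants of the same argument. For (2.12), the assumption $n_T\equiv 0$ on $\Omega\times(0,\varrho)$ combined with the explicit formula $n(x,0,t)=\pi_1(T-t)\,e^{(T-t)K_m\Delta}n_T(x,T-t)$ forces $n(\cdot,0,t)\equiv 0$ for $t\in(T-\varrho,T)$; on $(0,T-\varrho)$ the previous argument applies with $a_1=0$ and $\bar a(t)\geq\min(a_2,\varrho)>0$, so the observability constant depends only on $\varrho$, $a_2$, $T$. For (2.13), the key point is that by $H_3(i)$ the source term in the adjoint equation for $l$ vanishes on the strip $\{a<b\}$; hence on every characteristic $(x,a,t+a)$ with $a\in(0,b)$ the function $l$ obeys exactly the same homogeneous transport--diffusion equation as $n$, and the analogous identity $l(x,0,t)=\pi_2(a)\,e^{aK_f\Delta}\,l(x,a,t+a)$ holds for $a\in(0,b)$. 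Running the argument with $(\pi_2,K_f,\omega',b_1,\min(b_2,b))$ in place of $(\pi_1,K_m,\omega,a_1,a_2)$, and using $b_1<\eta<T$ together with $b_1<b$ to keep the window non-degenerate, yields (2.13).

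The main obstacle is the transfer from the nonlocal quantity $n(\cdot,0,t)$ to local observations of $n$ encoded by the first display: once one recognises that $n(\cdot,0,t)$ is exactly the terminal value of a Dirichlet heat solution whose intermediate-time values coincide, up to the scalar factor $\pi_1(a)$, with the observed $n(\cdot,a,t+a)$, the statement reduces to the standard Carleman estimate for the heat equation. The remaining bookkeeping --- uniformity of the observability constant in $t$ via the truncation by $\bar a(t)$, and keeping $\beta\equiv 0$ on the relevant age window in the argument for $l$ --- is then routine.
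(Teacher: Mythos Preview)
Your proof is correct and follows essentially the same strategy as the paper: reduce to the heat-equation observability of Proposition~2.3 along characteristics, use the semigroup contraction to pass to the terminal age $a=0$, and invoke $\beta\equiv 0$ on $(0,b)$ to make the $l$-equation homogeneous there. The only cosmetic difference is that the paper first performs the substitution $\tilde n=n\,e^{-\int_0^a\mu_m}$ and then splits $(0,T-\eta)$ into the two sub-intervals $(0,T-a_2)$ and $(T-a_2,T-\eta)$, whereas you work directly with the semigroup representation~(2.4) and handle both regimes uniformly via $\bar a(t)=\min(a_2,T-t)$; the underlying logic and constants coincide.
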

\begin{remark}
In fact, as one can see the assumption $(H_3)-(i)$ is not useful for the proof of the inequality $(\ref{d0d})$.
	\end{remark}
In order to prove the Proposition 2.2, we first recall the following observability inequality for parabolic equation (see, for instance $\cite{b10}$):
\begin{proposition}
	Let $T>0,$ $t_0$ and $t_1$ be such that $0<t_0<t_1<T.$ Then for every $w_0\in L^2(\Omega),$ the solution $w$ of the following system \
	\begin{equation}
		\left\{
		\begin{array}{ccc}
		\dfrac{\partial w(x,\lambda)}{\partial \lambda}-\Delta w(x,\lambda)=0\text{ in } (t_0,T)\times \Omega,\\
		 w=0 \text{ on } (t_0,T)\times \partial\Omega,	\\
		w(x,t_0)=w_0(x)\text{ in }\Omega,
		\end{array}
		\right.
	\end{equation}
	satisfies the estimate
	\[\int_{\Omega}w^2(T,x)dx\leq\int_{\Omega}w^2(x,t_1)dx\leq c_1e^{\dfrac{c_2}{t_1-t_0}}\int_{t_0}^{t_1}\int_{\omega}w^2(x,\lambda)dxd\lambda,\]
	where the constant $c_1$ and $c_2$ depend on $T$ and $\Omega.$
\end{proposition}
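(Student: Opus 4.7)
The plan is to treat the two inequalities separately and invoke classical parabolic machinery for each. The first bound $\int_\Omega w^2(x,T)\,dx \leq \int_\Omega w^2(x,t_1)\,dx$ is purely an energy-dissipation statement: multiplying the heat equation by $w$ and integrating over $\Omega$ (using the Dirichlet trace to kill the boundary term) shows that $\tfrac{d}{d\lambda}\|w(\cdot,\lambda)\|_{L^2(\Omega)}^2 = -2\|\nabla w(\cdot,\lambda)\|_{L^2(\Omega)}^2 \leq 0$, so the $L^2$ norm is non-increasing on $[t_1,T]$ and the inequality follows immediately.

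The non-trivial content is the final-state observability estimate $\int_\Omega w^2(x,t_1)\,dx \leq c_1 e^{c_2/(t_1-t_0)} \int_{t_0}^{t_1}\!\int_{\omega} w^2\,dx\,d\lambda$. I would derive it from the global Carleman inequality of Fursikov--Imanuvilov for the heat operator on $(t_0,t_1)\times\Omega$. Concretely, pick an open set $\omega_0$ with $\overline{\omega_0}\subset \omega$ and a function $\psi\in C^2(\overline{\Omega})$ satisfying $\psi>0$ in $\Omega$, $\psi=0$ on $\partial\Omega$, and $|\nabla\psi|>0$ on $\overline{\Omega}\setminus\omega_0$; then set the standard time/space weights
\[
\alpha(x,\lambda)=\frac{e^{2\mu\|\psi\|_\infty}-e^{\mu\psi(x)}}{(\lambda-t_0)(t_1-\lambda)},\qquad \xi(x,\lambda)=\frac{e^{\mu\psi(x)}}{(\lambda-t_0)(t_1-\lambda)},
\]
with $\mu$ fixed large enough. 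The Carleman inequality then yields, for $s\geq s_0(\Omega,\omega,t_1-t_0)$, a bound of the form
\[
\iint (s\xi)|w|^2\, e^{-2s\alpha}\,dx\,d\lambda \leq C \iint_{(t_0,t_1)\times\omega}(s\xi)^3|w|^2\, e^{-2s\alpha}\,dx\,d\lambda.
\]

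The next step is to translate this weighted global estimate into the pointwise-in-time bound at $\lambda=t_1$. For this I would combine the Carleman inequality with a standard energy argument on the sub-interval $(\tfrac{t_0+t_1}{2},t_1)$: on this interval the weight $e^{-2s\alpha}$ is bounded below by a constant depending on $(t_1-t_0)^{-1}$, so the left side controls $\int_{(t_0+t_1)/2}^{t_1}\!\int_\Omega w^2\,dx\,d\lambda$ up to a factor $e^{C/(t_1-t_0)}$; the dissipation inequality then upgrades this time-integrated $L^2$ bound into a bound at $\lambda=t_1$. On the right side, the weights $(s\xi)^3 e^{-2s\alpha}$ are uniformly bounded on $\overline{\omega}\times(t_0,t_1)$ by a constant of the same exponential form, producing the prefactor $c_1 e^{c_2/(t_1-t_0)}$ in front of $\int_{t_0}^{t_1}\!\int_\omega w^2$.

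The main obstacle is therefore not the mere existence of such an estimate (which is standard and even stated as ``recalled from \cite{b10}'') but the explicit tracking of the $(t_1-t_0)^{-1}$ dependence in the exponent. That dependence comes directly from the singularity of the weights $\alpha,\xi$ at the endpoints $t_0$ and $t_1$, and one needs to choose $s$ proportional to $(t_1-t_0)^{-2}$ to absorb lower-order terms; all other constants depend only on $\Omega$, $\omega$, and (mildly) on $T$. Since the authors are only recalling the result from \cite{b10}, I would simply cite the Carleman-based derivation above without reproducing the computations.
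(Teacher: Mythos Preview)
Your proposal is correct and matches the paper's treatment: the paper does not prove this proposition at all but merely recalls it from \cite{b10} (Fursikov--Imanuvilov), and the Carleman-estimate derivation you sketch is precisely the argument underlying that reference. Your outline of the energy-dissipation step and the extraction of the $e^{c_2/(t_1-t_0)}$ constant from the singular Carleman weights is accurate and goes somewhat beyond what the paper itself provides.
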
	
\begin{proof}{of the Proposition 2.2}\\
The state $n$ of the system $(\ref{3})$ verifies:
\begin{equation}
\left\{ 
\begin{array}{ccc}-\dfrac{\partial n}{\partial t}-\dfrac{\partial n}{\partial a}-\Delta n+\mu_m n&=0&\text{ in }\Omega\times(0,a_2)\times (0,T),\\
n&=0& \text{ on }\partial\Omega\times (0,A)\times (0,T),\\
n(a,T)&=n_T& \text{ in }\Omega\times(0,a_2).
\end{array}
\right.
\label{ad}
\end{equation}	
Let $\tilde{n}(x,a,t)=n(x,a,t)e^{-\int_{0}^{a}\mu(\alpha)d\alpha}.$ Then $\tilde{n}$ satisfies
\begin{equation}
\left\{
\begin{array}{ccc}
	\dfrac{\partial \tilde{n}}{\partial t}+\dfrac{\partial \tilde{n}}{\partial a} +\Delta\tilde{n}&=0&\text{ in }\Omega\times (a_1,a_2)\times (0,T), \\
\hat{n}&=0&\text{ on } \partial \Omega\times (0,A)\times (0,T),\\
\hat{n}(.,.,T)&=n_Te^{-\int_{0}^{a}\mu_m(\alpha)d\alpha}&\text{ in }\Omega\times (0,A).
\end{array}
\right.\label{ad11}
\end{equation}
Proving the inequality $(\ref{d0d})$ leads also to show that,
there exits a constant $C>0$ such that the solution $\tilde{n}$ of $(\ref{ad11})$ satisfies 
\begin{equation}
\int_{0}^{T-\eta}\int_{\Omega}\tilde{n}^2(x,0,t)dxdt\leq C\int_{0}^{T}\int_{a_1}^{a_2}\int_{\omega}\tilde{n}(x,a,t)dxdadt. \label{cci}
\end{equation}
Indeed, we have 
\[
\int_{0}^{T-\eta}\int_{\Omega}n^2(x,0,t)dxdt=\int_{0}^{T-\eta}\int_{\Omega}\tilde{n}^2(x,0,t)dxdt\]\[
\leq C\int_{0}^{T}\int_{a_1}^{a_2}\int_{\omega}\tilde{n}^2(x,a,t)dxdadt=C\int_{0}^{T}\int_{a_1}^{a_2}\int_{\omega}e^{-2\int_{0}^{a}\mu_m(r)dr}n^2(x,a,t)dxdadt\]\[\leq C\int_{0}^{T}\int_{a_1}^{a_2}\int_{\omega}n^2(x,a,t)dxdadt.
\]
We consider the following characteristics trajectory $\gamma(\lambda)=(T-\lambda,T+t-\lambda).$ If $T-\lambda=0$ the backward characteristics starts from $(0,t).$ If $T<a_1$ the trajectory $\gamma(\lambda)$ never reaches the observation region $(a_1,a_2)$ (see Fig 1). So we choose $T>a_1.$\\
	Without loss of the generality, let us assume here $\eta< a_2<T.$\\
	The proof is now done in two steps:
	\[\textbf{step 1: Estimation for } t\in (0,T-a_2)\]
We put:\\
\[w(\lambda)=\tilde{n}(x,T-\lambda,T+t-\lambda) \text{ ; }(\lambda\in (T-a_2,T)\quad and \quad x\in \Omega).\]
	Then, $w$ satisfies:
	\begin{equation}
		\left\{
		\begin{array}{ccc}
		\dfrac{\partial w(\lambda)}{\partial \lambda}-\Delta w(\lambda)=0\text{ in } \Omega\times (T-a_2,T)'\\
		w=0\text{ on } \partial \Omega\times (T-a_2,T),\\
		w(0)	=\tilde{n}(x,T,T+t)	\text{ in }\Omega.	
		\end{array}
		\right.
	\end{equation}
	Using the Proposition 2.3  with $T-a_2<t_0<t_1<T$ we obtain:\\
\[\int_{\Omega}w^2(T)dx\leq\int_{\Omega}w^2(t_1)dx\leq c_1e^{\dfrac{c_2}{t_1-t_0}}\int_{t_0}^{t_1}\int_{\Omega}w^2(\lambda)dxd\lambda.\]
That is equivalent to
\[\int_{\Omega}\tilde{n}^2(x,0,t)dx\leq c_1e^{\dfrac{c_2}{t_1-t_0}}\int_{t_0}^{t_1}\int_{\Omega}\tilde{n}^2(x,T-\lambda,t+T-\lambda)dxd\lambda\]
\[C\int_{T-t_1}^{T-t_0}\int_{\Omega}\tilde{n}^2(x,a,t+a)dxda.\]	
Then, for $t_0=T-a_2$ and $t_1=T-a_1,$ we obtain\\
\[\int_{\Omega}\tilde{n}^2(x,0,t)dxdt\leq C\int_{a_1}^{a_2}\int_{\omega}\tilde{n}^2(x,a,t+a)dxda.\]
Integrating with respect to $t$ over $(0,T-a_2)$ we get
\[\int_{0}^{T-a_2}\int_{\Omega}\tilde{n}^2(x,0,t)dxdt\leq C
\int_{a_1}^{a_2}\int_{a}^{T-a_2+a}\int_{\omega}\tilde{n}^2(x,a,t)dxdt da.\]
Finally, we have
\begin{equation}
	\int_{0}^{T-a_2}\int_{\Omega}\tilde{n}^2(x,0,t)dxdt\leq C
\int_{0}^{T}\int_{a_1}^{a_2}\int_{\omega}\tilde{n}^2(x,a,t)dxdadt.\label{z}
\end{equation}
\[\textbf{Step 2: Estimation for } t\in (T-a_2,T-\eta)\quad where \quad \eta\in (a_1,a_2)\]
Setting:\\
\[w(\lambda)=\tilde{n}(x,T-\lambda,T+t-\lambda) \text{ ; }(\lambda\in (T-a_1,T)),\] $w$ satisfies:
	\begin{equation}
		\left\{
		\begin{array}{ccc}
		\dfrac{\partial w(\lambda)}{\partial \lambda}-\Delta w(\lambda,x)=0\text{ in }  \Omega\times (T-a_2,T),\\
		w=0\text{ on } \partial \Omega\times (T-a_2,T),\\
				w(T-a_2)	=\tilde{n}(x,a_2,t+a_2)\text{ in }\Omega.
		\end{array}
		\right.
	\end{equation}
	Using again the Proposition 2.3  with $T-a_2<t_0<t_1<T$ we obtain:\\
\[\int_{\Omega}w^2(T)dx\leq\int_{\Omega}w^2(t_1)dx\leq c_1e^{\dfrac{c_2}{t_1-t_0}}\int_{t_0}^{t_1}\int_{\omega}w^2(\lambda)dxd\lambda.\]
Therefore,
\[\int_{\Omega}\tilde{n}^2(x,0,t)dx\leq c_1e^{\dfrac{c_2}{t_1-t_0}}\int_{t_0}^{t_1}\int_{\omega}\tilde{n}(x,T-\lambda,T+t-\lambda)dxd\lambda.\]	
Then, for $t_0=T-\eta$ and $t_1=T-a_1,$ we obtain\\
\[\int_{\Omega}\tilde{n}^2(x,0,t)dx\leq c_1e^{\dfrac{c_2}{\eta-a_1}}\int_{a_1}^{\eta}\int_{\omega}\tilde{n}(x,a,t+a)dxda.\]
Integrating with respect to $t$ over $(T-a_2,T-\eta)$ we get,
\[\int_{T-a_2}^{T-\eta}\int_{\Omega}\tilde{n}^2(x,0,t)dxdt\leq c_1e^{\dfrac{c_2}{\eta-a_1}}\int_{T-a_2}^{T-\eta}\int_{a_1}^{\eta}\int_{\omega}\tilde{n}^2(x,\alpha,t+\alpha)dxd\alpha dt\]\[\leq c_1e^{\dfrac{c_2}{\eta-a_1}}\int_{a_1}^{\eta}\int_{T-a_2}^{T-\eta}\int_{\omega}\tilde{n}^2(x,\alpha,t+\alpha)dxdtd\alpha .\]
Finally, one gets
\begin{equation}
	\int_{T-a_2}^{T-\eta}\int_{\Omega}\tilde{n}^2(x,0,t)dxdt\leq c_1e^{\dfrac{c_2}{\eta-a_1}}
\int_{0}^{T}\int_{a_1}^{a_2}\int_{\omega}\tilde{n}^2(x,a,t)dxdadt.\label{zz}
\end{equation}
Combining $(\ref{z})$ and $(\ref{zz}),$ we obtain:
\begin{equation}
\int_{0}^{T-\eta}\int_{\Omega}n^2(x,0,t)dxdt\leq C(\eta,a_1,a_2,\Omega)\int_{0}^{T}\int_{a_1}^{a_2}\int_{\omega}n^2(x,a,t)dxdadt.\label{ou1}
\end{equation}
Notice that, as $\lim_{\eta\rightarrow a_1}c_1e^{\dfrac{c_2}{\eta-a_1}}=+\infty$ then $\lim_{\eta\rightarrow +a_1} C(\eta,a_1,a_2,\Omega)=+\infty.$\\
Suppose now that $a_1=0$. From the above, we have for all $ \varrho>0,$ the existence of a constant depending on $ \varrho $ such that:
\begin{equation}
\int_{0}^{T-\varrho}\int_{\Omega}n^2(x,0,t)dxdt\leq C(T,\varrho,a_2,\Omega)\int_{0}^{T}\int_{0}^{a_2}\int_{\omega}n^2dxdadt.\label{ou2}
\end{equation}
Moreover $n_T=0\text{ in } \Omega\times (0,\varrho),$ then by using the characteristics method, we obtain $n(x,0,t)=0 \text{ in } \Omega\times (T-\varrho,T)$ (see Figure 1 (b)).\\
Finally, if $n_T=0\text{ in } \Omega\times (0,\varrho),$ we have the following estimates:
\begin{equation}
\int_{0}^{T}\int_{\Omega}n^2(x,0,t)dxdt\leq C(T,\varrho,a_2,\Omega)\int_{0}^{T}\int_{0}^{a_2}\int_{\omega}n^2dxdadt.\label{ou3}
\end{equation}
Taking $X=\min\{b,b_2\},$ the adjoint system relating to the state $l$ can be rewritten as:
\begin{equation}
\left\{ 
\begin{array}{ccc}-\dfrac{\partial l}{\partial t}-\dfrac{\partial l}{\partial a}-\Delta l+\mu_f l&=0&\text{ in }\Omega\times(0,X)\times (0,T),\\ 
l&=0& \text{ on }\partial\Omega \times (0,X)\times (0,T),\\
l(a,T)&=l_T& \text{ in }\Omega\times(0,X).
\end{array}
\right.
\label{mnk}
\end{equation}	
Proceeding as above, we get the inequality $(\ref{dd})$
\end{proof}
\begin{figure}[H]
\begin{subfigure}{0.50\textwidth}	
			\begin{overpic}[scale=0.22]{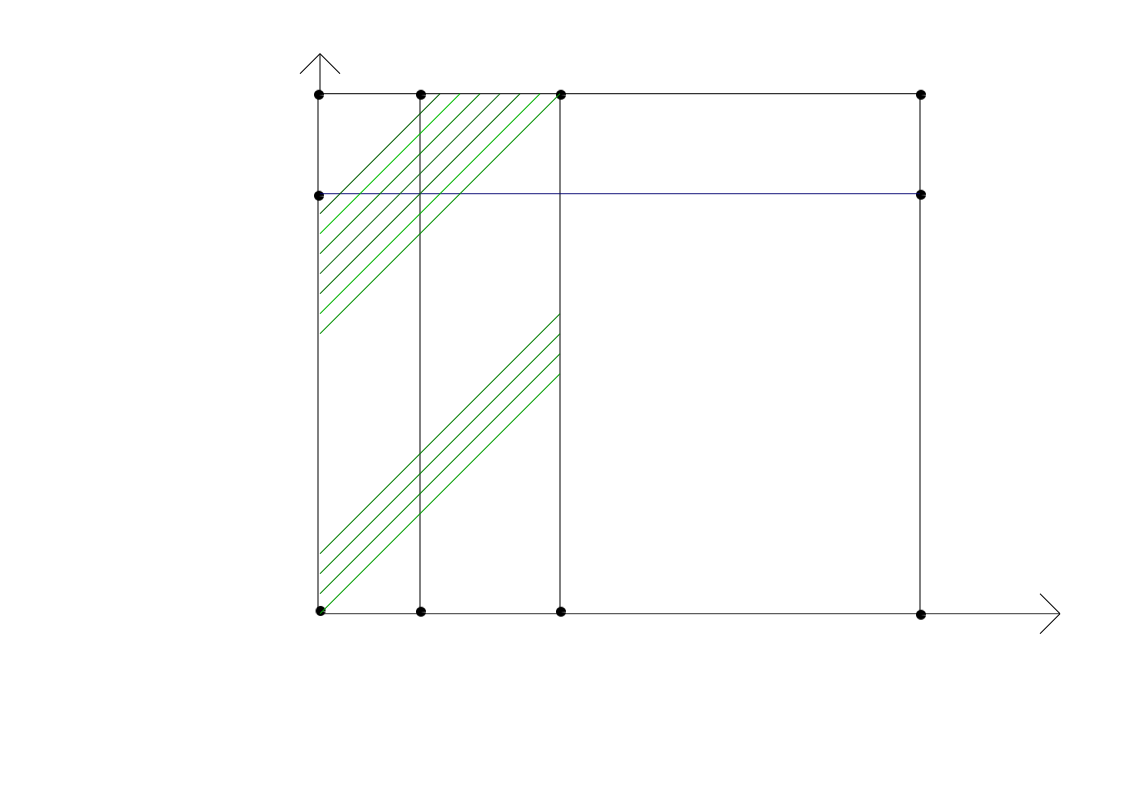}
			\put (22,62) {$T$}
			\put (14,52.5) {$T-a_1$}
			\put (35,12) {$a_1$}
			\put (49,12) {$a_2$}
			\put (80,10.5) {$A$}
              \end{overpic}
		\caption{An illustration of the estimate of $n(x,0,t)\text{ and } l(x,0,t).$ Here we have chosen $a_1=b_1$ and $a_2=b_2=b.$ Since $t\in (0,T-a_1)$ all the backward characteristics starting from $(0,t)$ enters the observation domain.}
		\end{subfigure}\quad	
		\begin{subfigure}{0.50\textwidth}
			\begin{overpic}[scale=0.25]{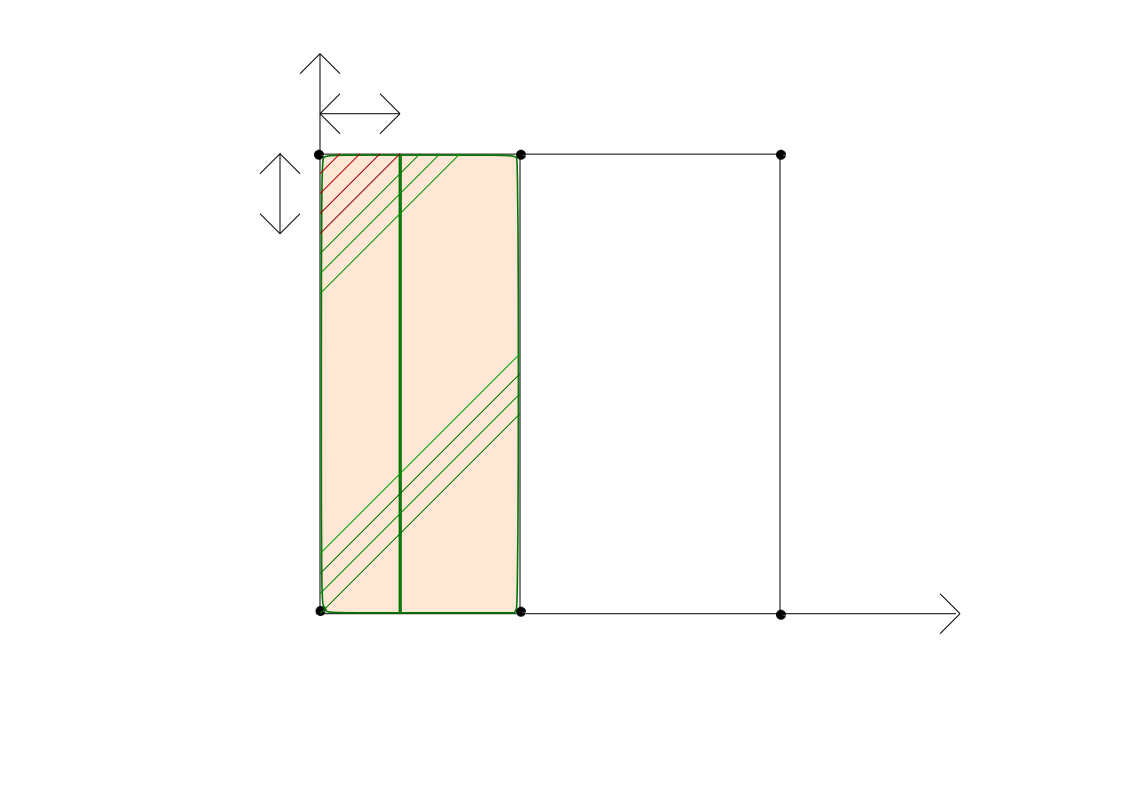}
			\put (29,62) {$n_T=0$}
			\put (4,52.5) {$n(x,0,t)=0$}
			\put (35,13) {$\varrho$}
			\put (45.5,13) {$a_2$}
			\put (68,11.5) {$A$}
			 \end{overpic}
		\caption{An illustration of the estimate of
$\int_{0}^{T}\int_{\Omega}n^2(x,0,t)dxdt. \text{ Here }n_T(x,a)=0\text{ in } \Omega\times(0,\varrho)$ and then $n(x,0,t)=0\text{ for } t\in (T-\varrho,T)$.}
		\end{subfigure}
			\caption{Illustration of the estimation of non-local terms}		\end{figure}
			\subsubsection{Estimation of $l(x,a,0)$ and $n(x,a,0)$}
We also need the following estimates
\begin{proposition}
Under assumptions $(H_1)-(H3),$ for every $T>\sup\{a_1,A-a_2\}$, there exists $C_T>0$ such that the solution $(n,l)$ of the system $(\ref{3})$ verifies the following inequality:
\begin{equation}
	\int_{0}^{A}\int_{\Omega}n^{2}(x,a,0)dxda\leq C_T\int_{\Xi}n^{2}(x,a,t)dxdadt. \label{10}
\end{equation}
\end{proposition}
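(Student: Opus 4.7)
The plan is to combine the representation formula $(\ref{alp})$ for $n$ with the parabolic observability inequality of Proposition 2.3, in the same spirit as Proposition 2.2, but now propagating information along the forward characteristic starting from $(a,0)$ into the observation cylinder $\Xi=\omega\times(a_1,a_2)\times(0,T)$, rather than from $(0,t)$.

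First, $(\ref{alp})$ immediately gives $n(x,a,0)=0$ as soon as $a>A-T$, so the quantity to estimate reduces to $\int_0^{A-T}\int_\Omega n^2(x,a,0)\,dx\,da$ (if $T\geq A$ the claim is trivial). For each $a\in(0,A-T)$, set $y:=a+T\in(T,A)$ and introduce the forward heat solution $w(\tau,x):=\pi_1(y)\,e^{\tau K_m\Delta}n_T(\cdot,y)(x)$ on $(0,T)\times\Omega$ with homogeneous Dirichlet boundary. A direct consequence of $(\ref{alp})$ is the key identity
\[w(\tau,x)=\pi_1(a+T-\tau)\,n(x,a+T-\tau,T-\tau),\qquad \tau\in(0,T),\]
which evaluates at $\tau=T$ to $\pi_1(a)\,n(x,a,0)$, and for $\tau\in(a+T-a_2,\,a+T-a_1)$ tracks $n$ along the characteristic while it sits inside the observation age window $(a_1,a_2)$.

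The second step is to apply Proposition 2.3 to $w$ on an interval $(T_0(a),T_1(a))\subset(0,T)$ contained in that window. Using $\pi_1\le 1$ to bound the right-hand side from above and the lower bound $\pi_1(a)\ge \pi_1(A-T)>0$ on the left (which is strictly positive by $(H_1)$ since $a<A-T<A$), one obtains
\[\int_\Omega n^2(x,a,0)\,dx\leq \frac{c_1\,e^{c_2/(T_1-T_0)}}{\pi_1^{\,2}(A-T)} \int_{T_0(a)}^{T_1(a)}\int_\omega n^2(x,a+T-\tau,T-\tau)\,dx\,d\tau.\]
I would then integrate over $a\in(0,A-T)$ and apply the linear change of variables $(a,\tau)\mapsto(\alpha,t):=(a+T-\tau,\,T-\tau)$, whose Jacobian equals $1$ and whose image is contained in $\omega\times(a_1,a_2)\times(0,T)=\Xi$. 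This turns the right-hand side into $\int_\Xi n^2\,dx\,d\alpha\,dt$, yielding $(\ref{10})$.

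The main obstacle is keeping the observability constant $c_1 e^{c_2/(T_1-T_0)}$ uniform in $a$. One must select $T_0(a)$ and $T_1(a)$ inside $(0,T)\cap(a+T-a_2,\,a+T-a_1)$ so that $T_1-T_0$ is bounded below independently of $a\in(0,A-T)$. A short case analysis, according to the sign of $a-a_1$ and of $a+T-a_2$, shows that the two critical lower bounds that can occur for $T_1-T_0$ are $T-a_1$ and $a_2+T-A$; these are strictly positive precisely because of the standing hypothesis $T>\sup\{a_1,A-a_2\}$, and this is where that hypothesis is used essentially. Once a uniform lower bound on $T_1-T_0$ is secured, the resulting constant $C_T$ depends only on $T$, $\Omega$, $\omega$, $a_1$, $a_2$ and the profile of $\mu_m$ on $(0,A-T)$, which gives the claimed independence of $(n_T,l_T)$ and $p$.
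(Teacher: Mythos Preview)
Your proposal is correct and follows essentially the same route as the paper's own proof. Both arguments reduce to $a$ in a subinterval where $n(\cdot,a,0)$ is not already zero by the formula $(\ref{alp})$, follow the forward characteristic $\lambda\mapsto(a+\lambda,\lambda)$, and apply the parabolic observability inequality of Proposition~2.3 on a $\lambda$-window during which the age variable lies in $(a_1,a_2)$; the paper implements this by introducing $\tilde n=\pi_1 n$ (so that $\tilde n$ solves a pure heat equation along characteristics) and splitting $(0,a_0)$ into two or three explicit subintervals according to the relative position of $T$, $a_1$, $a_2$, whereas you keep the factor $\pi_1$ explicit in the definition of $w$ and compress the case analysis into the single observation that the available observation window has length $\min(T,a+T-a_1)-\max(0,a+T-a_2)$, whose infimum over $a\in(0,A-T)$ is $\min\{T-a_1,\,a_2+T-A,\,a_2-a_1\}>0$ under the hypothesis $T>\sup\{a_1,A-a_2\}$. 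Your function $w(\tau,\cdot)=\pi_1(a+T-\tau)\,n(\cdot,a+T-\tau,T-\tau)$ coincides with the paper's $\tilde n$ evaluated along the characteristic, and your lower bound $\pi_1(a)\ge\pi_1(A-T)$ plays the same role as the paper's factor $e^{2\|\mu_m\|_{L^1(0,a_0)}}$, so the two proofs are really the same argument in slightly different packaging.
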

For every $T>\sup\{a_1,A-a_2\},$ there exists $a_0\in (a_1,a_2)$ such that $n(x,a,0)=0$ for all $(x,a)\in \Omega\times (a_0,A).$
The result follows from the stated below lemma.
\begin{lemma}
		Let us suppose that $T>\sup\{a_1,A-a_2\}$ . Then, there exists $a_0 \in (a_1,a_2)$ such that
		\[T>A-a_0>A-a\text{ for all } a\in (a_0,A),\] therefore, \[n(x,a,0)=0\text{ for all }x\in \Omega\text{ and } a\in (a_0,A).\]
		\end{lemma}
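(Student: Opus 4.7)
The plan is to exploit the characteristic representation of $n$ recalled in \eqref{alp}, which shows that $n(x,a,0)$ vanishes as soon as the backward characteristic issued from $(a,0)$ exits the age interval through $a=A$ before reaching the terminal time $T$; equivalently, $n(x,a,0)=0$ whenever $T>A-a$.

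First, I would establish the existence of the pivotal age $a_0$. From the hypothesis $T>\sup\{a_1,A-a_2\}$ one has simultaneously $a_1<a_2$ and $A-T<a_2$, so $\max\{a_1,A-T\}<a_2$. Picking any
\[
a_0\in\bigl(\max\{a_1,A-T\},\,a_2\bigr)
\]
yields $a_0\in(a_1,a_2)$ together with $a_0>A-T$, i.e.\ $T>A-a_0$. For every $a\in(a_0,A)$ one then has $A-a<A-a_0<T$, which gives the chain $T>A-a_0>A-a$ stated in the lemma.

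Second, the conclusion $n(x,a,0)=0$ for $(x,a)\in\Omega\times(a_0,A)$ follows immediately from the second branch of the representation formula \eqref{alp}: evaluated at $t=0$, the condition $A-a<T$ places us in the regime where the formula gives $n(x,a,0)=0$. Geometrically, the backward characteristic starting at $(a,0)$ with $a>a_0>A-T$ crosses the edge $a=A$ before reaching time $T$, and therefore inherits the homogeneous boundary datum $n(x,A,\cdot)=0$ prescribed in the adjoint system \eqref{3}.

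The statement is essentially geometric bookkeeping of characteristics, and there is no serious obstruction; the only point to watch is that $a_0$ be chosen strictly inside $(a_1,a_2)$, which is precisely why both conditions $T>a_1$ and $T>A-a_2$ are required.
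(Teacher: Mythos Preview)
Your argument is correct and follows the same line as the paper's: choose $a_0$ close enough to $a_2$ so that $a_0>A-T$ while keeping $a_0>a_1$, and then read off $n(x,a,0)=0$ for $a\in(a_0,A)$ from the second branch of the representation~\eqref{alp}; the paper does this by writing $a_0=a_2-\kappa$ with $\kappa$ small, which amounts to the same selection as your $a_0\in(\max\{a_1,A-T\},a_2)$. One small remark: your closing comment that ``both conditions $T>a_1$ and $T>A-a_2$ are required'' overstates things for this particular lemma---only $T>A-a_2$ (together with the standing assumption $a_1<a_2$) is actually used to place $a_0$ inside $(a_1,a_2)$ with $T>A-a_0$; the condition $T>a_1$ plays its role elsewhere in the observability analysis.
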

		\begin{proof}{of the Lemma 2.1}\\
			Suppose that $T>A-a_2,$ then, there exists $\kappa>0$ (we choose $\kappa$ such that $\kappa<a_2-a_1$ it reassures that $a_1<a_2-\kappa$) such that $T>A-a_2+\kappa\Leftrightarrow T>A-(a_2-\kappa).$\\
			We denote by $a_0=a_2-\kappa.$ And as $A-a_0>A-a\quad for \quad all\quad a\in (a_0,A),$ then, $T-0>A-a_0\quad for\quad all\quad a\in (a_0,A).$
			Finally, as $n(x,a,t)=0$ for almost all $(a,t)$ such that $T-t> A-a,$ we get
			$n(x,a,0)=0\text{ in }(x,a)\in \Omega\times (a_0,A)$.
 			\end{proof}
			From the Lemma 2.1, we therefore have to prove the following inequality:	
			\begin{equation}
	\int_{0}^{a_0}\int_{\Omega}n^{2}(x,a,0)dxda\leq C_T\int_{0}^{T}\int_{a_1}^{a_2}\int_{\omega}n^{2}(x,a,t)dxdadt \label{12v2}
\end{equation}
\begin{proof}{of the Proposition 2.4}\\
We consider the state $n$ of the cascade system $(\ref{3})$ that verifies the following system:
\begin{equation}
\left\{ 
\begin{array}{ccc}-\dfrac{\partial n}{\partial t}-\dfrac{\partial n}{\partial a}-\Delta n+\mu_m n&=0&\text{ in }\Omega\times(0,A)\times (0,T),\\ 
n&=0& \text{ on }\partial\Omega \times (0,A)\times (0,T),\\
n(x,a,T)&=n_T& \text{ in }\Omega\times (0,A).
\end{array}
\right.\label{rrv}
\end{equation}	
We denote by $\tilde{n}(x,a,t)=n(x,a,t)e^{-\int_{0}^{a}\mu_m(\alpha)d\alpha}.$ Then, $\tilde{n}$ satisfies
\begin{equation}
\left\{ 
\begin{array}{ccc}
\dfrac{\partial \tilde{n}}{\partial t}+\dfrac{\partial \tilde{n}}{\partial a} +\Delta\tilde{n}&=0&\text{ in }\Omega\times (0,A)\times (0,T),\\ 
\tilde{n}&=0& \text{ on }\partial\Omega \times (0,A)\times (0,T),\\
\tilde{n}(x,a,T)&=n_Te^{-\int_{0}^{a}\mu_m(r)dr}& \text{ in }\Omega\times (0,A).
\end{array}
\right.
\label{ad}
\end{equation}
Proving the inequality $(\ref{12v2})$ leads also to show that,
there exits a constant $C>0$ such that the solution $\tilde{n}$ of $(\ref{rrv})$ satisfies 
\begin{equation}
\int_{0}^{a_0}\int_{\Omega}\tilde{n}^2(x,a,0)dxda\leq C\int_{0}^{T}\int_{a_1}^{a_2}\int_{\omega}\tilde{n}(x,a,t)dxdadt. \label{cci}
\end{equation}
Indeed, we have 
\[
\int_{0}^{a_0}\int_{\Omega}n^2(x,a,0)dxda=\int_{0}^{a_0}\int_{\Omega}e^{2\int_{0}^{a}\mu_m(r)dr}\tilde{n}^2(x,a,0)dxda\leq Ce^{2\int_{0}^{a_0}\mu_m(\alpha)d\alpha}\int_{0}^{T}\int_{a_1}^{a_2}\int_{\omega}\tilde{n}^2(x,a,t)dxdadt.\]
Then
\[\int_{0}^{a_0}\int_{\Omega}n^2(x,a,0)dxda\leq Ce^{2\|\mu_m\|_{L^1(0,a_0)}}\int_{0}^{a_0}\int_{\Omega}\tilde{n}^2(x,a,0)dxda\leq Ke^{2\|\mu_m\|_{L^1(0,a_0)}}\int_{0}^{T}\int_{a_1}^{a_2}\int_{\omega}n^2(x,a,t)dxdadt.
\]
			We consider in this proof the characteristics $\gamma (\lambda)=(a+\lambda,\lambda).$
		For $\lambda=0$ the characteristics starts from $(a,0).$\\
		We have two cases.\\
		\textbf{Case 1}: $T<a_2$ \\
	Two situations can arise: \\
	$\bullet$ $b_0=a_2-T<a_1<a_0$ in this situation we split the interval $(0,a_0)$ as \begin{equation}(0,a_0)=(0,b_0)\cup(b_0,a_1)\cup (a_1,a_0)\label{iu}.
	\end{equation}\\
	$\bullet$ $a_1<b_0=a_2-T<a_0$, here we split the interval $(0,a_0)$ as
	\[(0,a_0)=(0,a_1)\cup (a_1,a_0).\]	
			\textbf{Case 2}: $T\geq a_2$ \\ In this case we split the interval $(0,a_0)$ as $(0,a_0)=(0,a_1)\cup (a_1,a_0).$\\
	In the remaining part of the proof we give upper bounds for $\int_{I}\int_{\Omega}\tilde{n}^2(x,a,0)dxda$
 where $I$ is successively each one of the intervals appearing in the decomposition $(\ref{iu})$.\\
	\textbf{Upper bound on $(0,b_0)$}:\\
	For $a\in(0,b_0)$ we first set 
	$w(x,\lambda)=\tilde{n}(x,T+a-\lambda,T-\lambda)\text{    } (\lambda\in (0,T)\text{ and } x\in\Omega)$ where $\tilde{n}=e^{-\int_{0}^{a}\mu_m(\alpha)d\alpha}n.$\\
	Then, $w$ verifies 
	\begin{equation}
		\left\{
		\begin{array}{ccc}
		\dfrac{\partial w(x,\lambda)}{\partial \lambda}-\Delta w(x,\lambda)=0\text{ in } \Omega\times(0,T),\\
		w=0 \text{ on } \partial \Omega \times (0,T),\\
		w(T)=\tilde{n}(x,a+T,T)\text{ in }\Omega.
		\end{array}
		\right.\label{ooo}
	\end{equation}
	By applying the Proposition 2.3 with $t_0=0$ and $t_1=a+T-a_1,$ we obtain:\\
\[\int_{\Omega}w^2(x,T)dx\leq c_1e^{\dfrac{c_2}{a+T-a_1}}\int_{0}^{a+T-a_1}\int_{\omega}w^2(x,\lambda)dxd\lambda.\]
	
	Then, we have 
\[\int_{\Omega}\tilde{n}^2(x,a,0)dx\leq c_1e^{\dfrac{c_2}{a+T-a_1}}\int_{a_1}^{a+T}\int_{\omega}\tilde{n}(x,\alpha,\alpha-a)dxd\lambda\]\[=C\int_{a_1}^{a+T}\int_{\omega}\tilde{n}^2(x,\alpha,\alpha-a)dxd\alpha\leq C\int_{a_1}^{a_2}\int_{\omega}\tilde{n}(x,\alpha,\alpha-a)dxd\alpha.\]	
Integrating with respect to $a$ over $(0,b_0)$ we get
\[\int_{0}^{b_0}\int_{\Omega}\tilde{n}^2(x,a,0)dxda\leq C\int_{0}^{b_0}\int_{a_1}^{a_2}\int_{\omega}\tilde{n}^2(x,\alpha,\alpha-a)dxd\alpha da.\]
As
\[\int_{0}^{b_0}\int_{a_1}^{a_2}\int_{\omega}\tilde{n}^2(x,\alpha,\alpha-a)dxd\alpha da=\int_{a_1}^{a_2}\int_{0}^{b_0}\int_{\omega}\tilde{n}^2(x,\alpha,\alpha-a)dxdad\alpha ,\]
then,
\[\int_{0}^{b_0}\int_{\Omega}\tilde{n}^2(x,a,0)dxda\leq C\int_{a_1}^{a_2}\int_{\alpha-b_0}^{\alpha}\int_{\omega}\tilde{n}^2(x,\alpha,t)dxdtd\alpha .\]
Finally
\begin{equation}
	\int_{0}^{b_0}\int_{\Omega}\tilde{n}^2(x,a,0)dxda\leq C\int_{0}^{T} \int_{a_1}^{a_2}\int_{\omega}\tilde{n}^2(x,a,t)dxdadt. \label{7}
\end{equation}
\textbf{Upper bound on $(b_0,a_1)$}:\\
For $a\in (b_0,a_1)$	we consider always the system $(\ref{ooo})$ but $\lambda\in (T+a-a_2,T).$\\
Applying the Proposition 2.3 with $t_0=a+T-a_2$ and $t_1=a+T-a_1$, we obtain
\[\int_{\Omega}\tilde{n}^2(x,a,0)dx\leq C\int_{a_1}^{a_2}\int_{\omega}\tilde{n}^2(x,\alpha,\alpha-a)dxd\alpha.\]	
And as before, we get 
\begin{equation}
	\int_{b_0}^{a_1}\int_{\Omega}\tilde{n}^2(x,a,0)dxda\leq C\int_{0}^{T}\int_{a_1}^{a_2}\int_{\omega}\tilde{n}^2(x,a,t)dxdadt. \label{9}
	\end{equation}
	$\textbf{Upper bound on } (a_1,a_0)$\\
	Similarly, we obtain for $t_0=T+a-a_2$ and $t_1=T$:
		\begin{equation}
	\int_{a_1}^{a_0}\int_{\Omega}\tilde{n}(x,a,0)dxda\leq C\int_{0}^{T}\int_{a_1}^{a_1}\int_{\omega}\tilde{n}^2(x,a,t)dxdadt. \label{1555}
	\end{equation}
	Consequently, combining $(\ref{7})$, $(\ref{9})$ and $(\ref{1555})$ we obtain:
	\[\int_{0}^{a_0}\int_{\Omega}\tilde{n}^2(x,a,0)dxda\leq C\int_{0}^{T}\int_{a_1}^{a_2}\int_{\omega}\tilde{n}(x,a,t)dxdadt.\]	
		\end{proof} 
We also need the following estimate.
\begin{proposition}
Let us assume  true the assumption $(H_1)-(H_2)$ and let $b_1<a_0<b$ and $T>b_1.$ There exists $C_T>0$ such that the solution $l$ of the system $(\ref{3})$ verifies the following inequality
\begin{equation}
	\int_{0}^{a_0}\int_{\Omega}l^{2}(x,a,0)dxda\leq C_T\int_{\Xi'}l^{2}(x,a,t)dxdadt. \label{C}
\end{equation}
\end{proposition}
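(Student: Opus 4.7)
My plan is to mirror the proof of Proposition 2.4 with only cosmetic modifications. The crucial observation is that hypothesis $(H_3)(i)$ forces $\beta(a,p)=0$ on $(0,b)$, so on the age slab $\Omega\times(0,b)\times(0,T)$ the right-hand side of the $l$-equation in the adjoint system \eqref{3} vanishes identically and $l$ satisfies there the purely homogeneous backward heat--transport--reaction equation
\[-l_t-l_a-K_f\Delta l+\mu_f l=0,\quad l=0\text{ on }\partial\Omega,\quad l(\cdot,\cdot,T)=l_T,\]
decoupled from $n(\cdot,0,\cdot)$ and $l(\cdot,0,\cdot)$. Since $a_0<b$, the whole target integral lives inside this slab, and the problem reduces to an observability estimate of exactly the same type as the one proved for $n$ in Proposition 2.4.

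Concretely, I would first absorb the mortality via $\tilde{l}(x,a,t):=l(x,a,t)\,e^{-\int_0^a\mu_f(\alpha)\,d\alpha}$, reducing the statement \eqref{C} to the analogous bound on $\tilde{l}$; the two norms are equivalent on $(0,a_0)\times\Omega$ because $\mu_f\in L^1_{\mathrm{loc}}(0,A)$ and $a_0<b<A$. Along the characteristic $\lambda\mapsto(T+a-\lambda,\,T-\lambda)$ the function $w(x,\lambda):=\tilde{l}(x,T+a-\lambda,T-\lambda)$ satisfies the forward heat equation $w_\lambda-K_f\Delta w=0$ on $\Omega$ with Dirichlet boundary conditions, and I would then invoke Proposition 2.3 on a subinterval $(t_0,t_1)$ chosen so that the corresponding age range $\{T+a-\lambda:\lambda\in(t_0,t_1)\}$ sits inside the intersection of the observation window $(b_1,b_2)$ and the homogeneous slab $(0,b)$.

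The hypotheses $b_1<a_0<b$ and $T>b_1$ enter at exactly this point: they guarantee that for every $a\in(0,a_0)$ the characteristic through $(a,0)$ intersects $\omega'\times(b_1,\min(b_2,b))\times(0,T)$, so the admissible $\lambda$-window is nonempty. Following the proof of Proposition 2.4 I would then split $(0,a_0)$ according to whether $T<b_2$ or $T\ge b_2$ (and, in the former case, whether $b_2-T$ falls below or above $b_1$), apply Proposition 2.3 on each sub-interval, integrate in $a$ using Fubini as in \eqref{7}, \eqref{9} and \eqref{1555}, and finally undo the exponential weight to recover \eqref{C}.

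The main subtlety --- and the only place where care is needed beyond copying the Proposition 2.4 argument --- is that the observability constant $c_1 e^{c_2/(t_1-t_0)}$ of Proposition 2.3 degenerates when $t_1-t_0\to 0$. To control it one must choose the splitting parameter (the analogue of $\eta$ in the proof of Proposition 2.2) strictly interior to $(b_1,a_0)$; the assumption $b_1<a_0<b$ leaves positive room on both sides, so such a choice is always possible and yields a finite constant $C_T$ depending on $T$, $a_0$, $b_1$, $b_2$, $b$ and $\Omega$.
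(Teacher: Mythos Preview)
Your proposal is correct and follows essentially the same approach as the paper: observe that $\beta(a,p)=0$ on $(0,b)$ so that $l$ satisfies a homogeneous backward equation on the slab $\Omega\times(0,b)\times(0,T)$, and then repeat the characteristic-plus-Proposition~2.3 argument of Proposition~2.4. The paper's proof is in fact just the two-line sketch ``$\beta=0$ on $(0,b)$, proceed as in Proposition~2.4''; your write-up simply makes the details explicit (and correctly attributes the vanishing of $\beta$ to $(H_3)(i)$ rather than $(H_2)$).
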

\begin{proof}
From the assumption $(H_2),$ we have $\beta=0\text{ in }(0,b).$ 
Then the function $l$ verifies:
	\begin{equation}
\left\{ 
\begin{array}{ccc}-\dfrac{\partial l}{\partial t}-\dfrac{\partial l}{\partial a}-\Delta l+\mu_{l} l&=0&\text{ in }\Omega\times(0,b)\times (0,T),\\ 
l&=0& \text{ on }\partial\Omega \times (0,b)\times (0,T),\\
l(x,a,T)&=l_T& \text{ in }\Omega\times(0,b).
\end{array}
\right.
\label{a11}
\end{equation} 
Proceeding as in the Proof of Proposition 2.4, we get the desired result.
\end{proof}
\subsubsection{Proof of the observability inequality}
For the proof of the Theorem 2.1, we start with the following Lemma:
\begin{lemma}
		Let us suppose that $T>A-a_2+a_1$ and $a_1<b$. Then there exists $a_0 \in (a_1,b)$ and $\kappa>0$ such that
		\[T>T-(a_1+\kappa)>A-a_0>A-a\text{ for }\text{ all } a\in (a_0,A).\]
		Therefore \[l(x,a,0)=\int_{0}^{A-a}\dfrac{\pi(a+s)}{\pi(a)}\left(e^{s\Delta}\beta(a+s,p(x,s))l(x,0,s)+e^{s\Delta}\beta(a+s,p(x,s))n(x,0,s)\right)ds\quad for \text{ all } (x,a)\in\Omega\times(a_0,A).\]
		\end{lemma}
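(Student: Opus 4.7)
The plan is two-fold: first, make a careful choice of $a_0$ and $\kappa$ imitating the bookkeeping of Lemma 2.1 but on the stricter window $(a_1,b)$; second, simply read off the integral expression for $l(x,a,0)$ from the characteristic representation $(\ref{alp})$ evaluated at $t=0$.

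For the first step I would set $\delta := T - (A - a_2 + a_1) > 0$, which is positive by hypothesis, and then pick two positive parameters $\kappa$ and $\rho$ with $\kappa + \rho < \delta$. Setting $a_0 := a_2 - \rho$, I get
\[
T - (a_1 + \kappa) - (A - a_0) \;=\; \delta - \kappa - \rho \;>\; 0,
\]
so the central inequality $T-(a_1+\kappa) > A-a_0$ is built in; the chain $T > T-(a_1+\kappa)$ is free since $a_1+\kappa>0$, and $A-a_0 > A-a$ on $(a_0,A)$ is automatic. The only delicate point is $a_0 \in (a_1,b)$: $a_0 > a_1$ as soon as $\rho < a_2 - a_1$ (which one secures by shrinking $\delta$ if necessary), while $a_0 < b$ is immediate in the intended regime $a_2 \leq b$. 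If instead $a_2 > b$, one reconfigures by placing $a_0$ just below $b$ and using the strengthened inequality $T > A - b + a_1$ inherited from the hypothesis in that case.

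Once $a_0$ and $\kappa$ are fixed, the second step is essentially cosmetic. For each $a \in (a_0,A)$ one has $A - a < A - a_0 < T$, so at time $t=0$ the backward characteristic issued from $(a,0)$ exits the age strip before the terminal time $T$: we fall into the branch $A - a < T - t$ of the representation $(\ref{alp})$. Substituting $t=0$, using linearity of the semigroup $e^{sK_f\Delta}$ to split the source into its $n$- and $l$-parts, and contracting $\pi_2$ to the $\pi$ used in the statement gives exactly
\[
l(x,a,0) \;=\; \int_{0}^{A-a} \frac{\pi(a+s)}{\pi(a)}\Bigl[(1-\gamma)\,e^{sK_f\Delta}\beta(a+s,p(x,s))\,n(x,0,s) + \gamma\,e^{sK_f\Delta}\beta(a+s,p(x,s))\,l(x,0,s)\Bigr]\,ds,
\]
as claimed.

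The heart of the lemma is not this representation, which is just $(\ref{alp})$ rewritten, but the strict slack $\kappa>0$. That slack is precisely what will subsequently permit controlling $n(x,0,s)$ and $l(x,0,s)$ on the entire integration range $s \in (0,A-a) \subset (0,T-(a_1+\kappa))$ by invoking the non-local inequalities $(\ref{d0d})$ and $(\ref{dd})$, which are valid only on intervals $(0,T-\eta)$ with $\eta > a_1$. The main obstacle, therefore, is the first step: securing the three-way compatibility of $a_0 > a_1$, $a_0 < b$, and $a_0 > A - T + a_1 + \kappa$ from the single hypothesis $T > A - a_2 + a_1$ together with $a_1 < b$; everything thereafter is just unpacking the already-derived formula $(\ref{alp})$.
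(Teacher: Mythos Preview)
Your approach is essentially the paper's: the paper takes a single parameter $\kappa$ with $2\kappa < a_2 - a_1$, sets $a_0 = a_2 - \kappa$ (your construction specialized to $\rho = \kappa$), checks the chain of inequalities, and then reads off the formula for $l(x,a,0)$ from the second branch of the representation at $t=0$ exactly as you do.

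One caveat: your handling of the side case $a_2 > b$ is incorrect. When $a_2 > b$ one has $A - b + a_1 > A - a_2 + a_1$, so the inequality $T > A - b + a_1$ is \emph{not} ``inherited'' from the hypothesis $T > A - a_2 + a_1$; it is strictly stronger, and there is no way to manufacture $a_0 \in (a_1,b)$ with $T-(a_1+\kappa) > A-a_0$ from the stated hypothesis alone in that regime. The paper sidesteps this by declaring ``without loss of generality $a_2 = b$'' at the outset --- effectively working in the regime $a_2 \le b$, where your argument is correct and complete. This reduction is itself a hand-wave (shrinking the observation window to $(a_1,b)$ really does require the stronger time bound), so you are not doing worse than the paper; but you should not claim the stronger inequality follows.
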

		\begin{proof}
		Notice that here the solution of the adjoint system $(\ref{3})$ is
		\begin{equation}
		n(t) = 
		\begin{dcases}
			 \dfrac{\pi_1(a+T-t)}{\pi_1(a)}e^{(T-t) K_m\Delta}n_T(x,a+T-t)\text{ if } T-t\leq A-a, \\
			0 \text{ if } A-a<T-t .\label{alp}
		\end{dcases}
		\end{equation}
		and
		\begin{equation}
		l(t) = 
		\begin{dcases}
			 \dfrac{\pi_2(a+T-t)}{\pi_2(a)}e^{(T-t) K_f\Delta}l_T(x,a+t-T)\\
			 +\int_{t}^{T}\dfrac{\pi_2(a+s-t)}{\pi_2(a)}\left(e^{(s-t)K_f\Delta}\beta(a+s-t,p(x,s))((1-\gamma)n(x,0,s)+\gamma l(x,0,s)\right)ds\text { if } T-t\leq A-a, \\
			\int_{t}^{t+A-a}\dfrac{\pi_2(a+s-t)}{\pi_2(a)}\left(e^{(s-t)k_f\Delta}\beta(a+s-t,p(x,s))\left((1-\gamma)n(x,0,s)+\gamma l(x,0,s)\right)\right)ds \text{ if }A-a<T-t .\label{alp}
		\end{dcases}
		\end{equation}		
		Without loss of the generality, we suppose that $a_2=b.$\\
			Suppose that $T>a_1+A-a_2$ then $T-a_1>A-a_2.$ So, there exists $\kappa>0$ (we choose $\kappa$ such that $2\kappa<a_2-a_1$ it reassures that $a_1+\kappa<a_2-\kappa$) such that $T-a_1-2\kappa>A-a_2$ that is $T-(a_1+\kappa)>A-(a_2-\kappa).$\\
			We denote by $a_0=a_2-\kappa$ and as $A-a_0>A-a\quad for \quad all\quad a\in (a_0,A),$ then $T>T-(a_1+\kappa)>A-a_0\quad for\quad all\quad a\in (a_0,A).$\\
			Moreover, for $(a,t)$ such that $T-t>A-a,$ we have \[l(x,a,t)=\int_{t}^{t+A-a}\dfrac{\pi_2(a+s-t)}{\pi_2(a)}\left(e^{K_f(s-t)\Delta}\beta(a+s-t,p(x,s))n(x,0,s)+e^{K_f(s-t)\Delta}\beta(a+s-t,p(x,s))l(x,0,s)\right)ds\] and as for $t=0$ and $a\in (a_0,A),\quad T-0>A-a_0>A-a,$ then \[l(x,a,0)=\int_{0}^{A-a}\dfrac{\pi_2(a+s)}{\pi_2(a)}\left(e^{(s K_f\Delta}\beta(a+s,p(x,s))n(x,0,s)+e^{sK_f\Delta}\beta(a+s,p(x,s))l(x,0,s)\right)ds.\]
			Notice that, as $(a_1,a_2)\subset (b_1,b_2),$ then, if $a_0\in (a_1,a_2),$ we have $a_0\in (b_1,b_2).$		
			\end{proof}
			Now, we can prove the result of Theorem 2.1
			\begin{proof}{of the Theorem 2.1}\\
				Let $a_0$ as in the Lemma 2.2. We have:
				\[\int_{0}^{A}\int_{\Omega}l^2(x,a,0)dxda=\int_{0}^{a_0}\int_{\Omega}l^2(x,a,0)dxda+\int_{a_0}^{A}\int_{\Omega}l^2(x,a,0)dxda.\]
				Using the results of the Lemma 2.2, the fact that $\beta\in L^{\infty}((0,A)\times \mathbb{R})$ and the fact that  
				$$\left\|\dfrac{\pi_2(a+t)}{\pi_2(a)}e^{t K_f\Delta}\right\|\leq 1,$$ we can prove the existence of a constant $K_T(A,T,\|\beta\|_{\infty})$ independent of $p$ such that:
				\[\int_{a_0}^{A}\int_{\Omega}l^2(x,a,0)dxda\leq K_T\left(\int_{0}^{T-(a_1+\kappa)}\int_{\Omega}n^2(x,0,t)dxdt+\int_{0}^{T-(a_1+\kappa)}\int_{\Omega}l^2(x,0,t)dxdt\right).\]
				Moreover, we have $b_1\leq a_1\leq a_1+\kappa,$ then, we have also, from the Proposition 2.1 that
				\[\int_{0}^{T-(a_1+\kappa)}\int_{\Omega}n^2(x,0,t)dxdt\leq C_T\int_{\Xi}n^{2}(x,a,t)dxdadt\text{ and } \int_{0}^{T-(a_1+\kappa)}\int_{\Omega}l^2(x,0,t)dxdt\leq C'_{T}\int_{\Xi'}l^{2}(x,a,t)dxdadt.\]
				Finally, adding the above inequality to the results of Proposition 2.3 and Proposition 2.2, we get:
			$$\int_{0}^{A}\int_{\Omega}n^2(x,a,0)dxda+\int_{0}^{A}\int_{\Omega}l^2(x,a,0)dxda\leq C_{T}\left(\int_{\Xi}n^2(x,a,t)dxdadt+\int_{\Xi'}l^2(x,a,t)dxdadt\right)$$	 (see Figure 2).
				\end{proof}
				\begin{figure}[H]
		\begin{overpic}[scale=0.8]{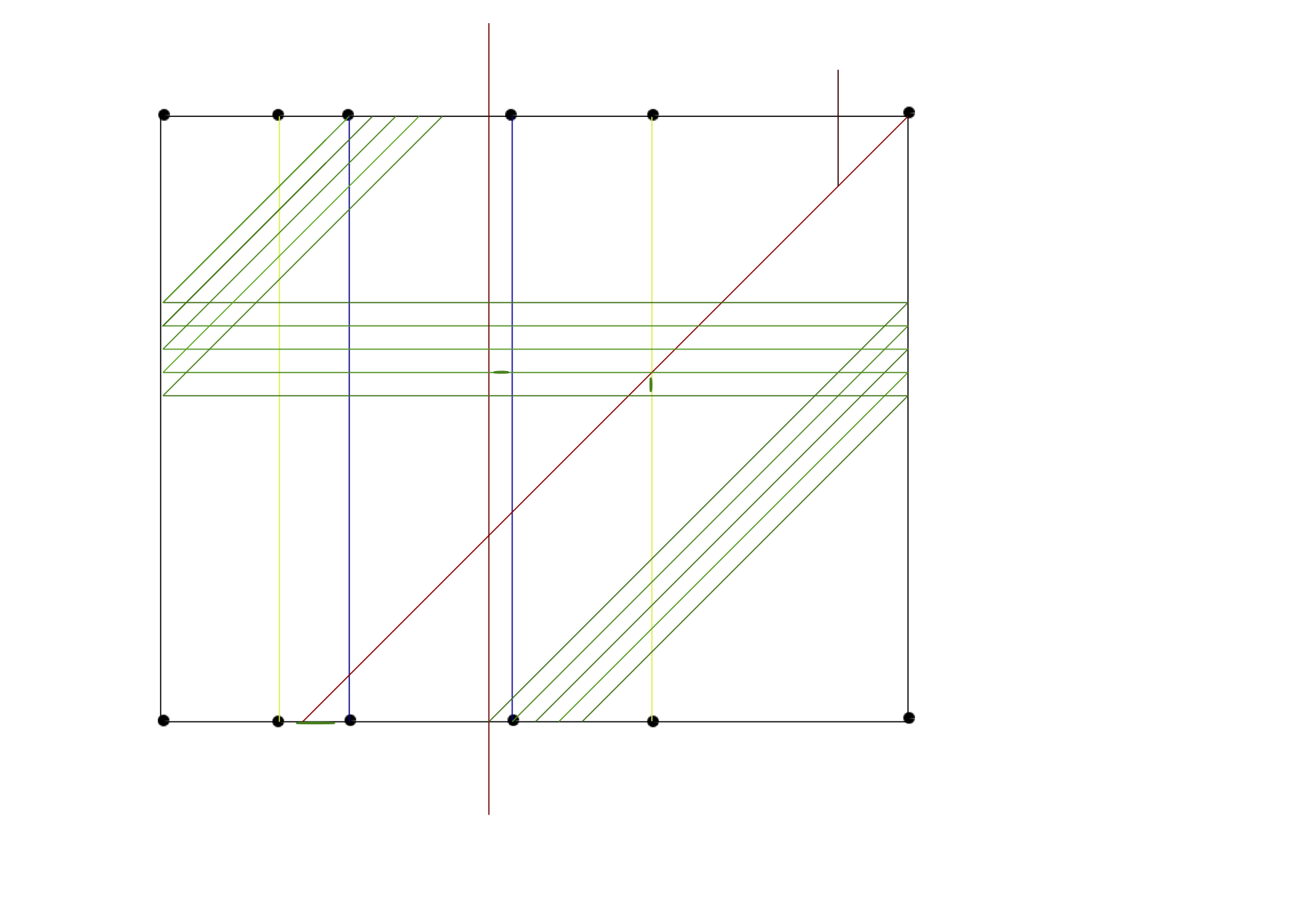}
		\put (10.5,62) {$T$}
          \put (21,13) {$b_1$}
           \put (26,13) {$a_1$}
           			\put (39,13) {$a_2=b$}	
			\put (49,13) {$b_2$}	
			\put (49,5.5) {$$}	
			\put (36.75,65) {$a_0$}	
			\put (56,66) {$\{\text{ the line }A-a=T-t\}$}		
			\put (70,13) {$A$}			
				\end{overpic}
				\caption{ Illustration of Observability inequality:\\ 
The backward characteristics starting from $(a,0)$ with
$a\in (a_2,A)$ (green lines) hits the line $(a=A)$, gets renewed
by the renewal condition $(1-\gamma)\beta(a,p)n(x,0,t)+\gamma\beta(a,p)l(x,0,t)$ and then 
enters the observation domain.\\
More precisely, the backward characteristics need at most $A-a_2$ time to hits the line $a=A$, get renewed by the renewal condition $(1-\gamma)\beta(a,p)n(x,0,t)+\gamma\beta(a,p)l(x,0,t)$ and  takes maximum  $a_1$ time to enter the observation domain. Thus, at least $T=A-a_2+a_1$ time is needed to obtain the observability inequality.\\
So with the conditions $T>A-a_2+a_1$ and $a_1<\eta<T,$ all the characteristics starting at $(a,0)$ with $a\in (a_2,A)$ get renewed by the renewal condition $(1-\gamma)\beta(a,p)n(x,0,t)+\gamma\beta(a,p)l(x,0,t)$ in $t\in (0,T-\eta)$ and enter the observation domain.}
	\end{figure}
	\begin{remark}
		The constant $C_T(T,\|\beta\|_{\infty},a_1,a_2,b_1,b_2,A,\hat{a})$ does not depend on $p$ because $\beta\in L^{\infty}((0,A)\times \mathbb{R}).$
			\end{remark}
				\subsection{Approximate null controllability result}
				We have the following result:
				\begin{theorem}
	Under assumptions $(H1)-(H2).$ For every time $T>a_1+A-a_2,$ and for every $\kappa>0\text{ and } \vartheta>0$ there exists a control $(v_{\kappa},v_{\vartheta})$ such that the solution $(m,f)$ of the system $(\ref{2})$ verifies 
	\[\|m(.,.,T)\|_{L^2(Q_A)}\leq \kappa \]
	and 
	\[\|f(.,.,T)\|_{L^2(Q_A)}\leq \vartheta. \]
	\end{theorem}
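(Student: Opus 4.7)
The plan is to deduce Theorem 2.3 from the observability inequality of Theorem 2.1 via a classical HUM-type penalization argument (as in Fabre--Puel--Zuazua), since the system (2) is linear for each fixed $p\in L^2(Q_T)$.

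First I would establish the duality identity between (2) and (3). Multiplying the $m$-equation by $n$ and the $f$-equation by $l$, integrating over $Q$ and integrating by parts, the boundary contribution at $a=0$ coming from the $m$-trace, namely $-(1-\gamma)\int_{Q_T} n(x,0,t)\int_0^A\beta(a,p)f\,da\,dxdt$, exactly cancels with the coupling source $(1-\gamma)\beta(a,p)n(x,0,t)$ in the adjoint equation for $l$; the $\gamma$-terms cancel similarly. This yields the clean identity
\begin{equation*}
\int_{Q_A}m(x,a,T)n_T\,dxda+\int_{Q_A}f(x,a,T)l_T\,dxda
=\int_{Q_A}m_0\,n(0)\,dxda+\int_{Q_A}f_0\,l(0)\,dxda+\int_\Xi v\,n+\int_{\Xi'}u\,l.
\end{equation*}

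Next I introduce, on $(L^2(Q_A))^2$, the penalized functional
\begin{equation*}
J_{\kappa,\vartheta}(n_T,l_T)=\tfrac12\int_\Xi n^2+\tfrac12\int_{\Xi'}l^2+\kappa\|n_T\|_{L^2(Q_A)}+\vartheta\|l_T\|_{L^2(Q_A)}+\int_{Q_A}m_0\,n(0)+\int_{Q_A}f_0\,l(0),
\end{equation*}
where $(n,l)$ is the adjoint solution associated with $(n_T,l_T)$. The functional is continuous and convex. For coercivity I argue by contradiction: given a sequence $(n_T^k,l_T^k)$ with $\rho_k:=\|(n_T^k,l_T^k)\|\to\infty$, I normalize and split into two cases. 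If $\|n^k\|_{L^2(\Xi)}^2+\|l^k\|_{L^2(\Xi')}^2$ stays bounded below by a positive multiple of $\rho_k^2$ along a subsequence, the quadratic term forces $J_{\kappa,\vartheta}^k/\rho_k\to\infty$. Otherwise this observation quantity divided by $\rho_k^2$ tends to zero, and Theorem 2.1 forces $\|n^k(0)\|^2+\|l^k(0)\|^2$ to be negligible compared to $\rho_k^2$ as well, so the Cauchy--Schwarz bounds on the initial-data pairings (divided by $\rho_k$) tend to zero; since one of $\|n_T^k\|/\rho_k,\|l_T^k\|/\rho_k$ is at least $1/\sqrt2$, the terms $\kappa\|\hat n_T^k\|+\vartheta\|\hat l_T^k\|$ stay bounded below by $\min(\kappa,\vartheta)/\sqrt2>0$, giving $\liminf J_{\kappa,\vartheta}^k/\rho_k>0$ and hence $J_{\kappa,\vartheta}^k\to\infty$.

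Coercivity plus continuity and convexity yield a minimizer $(\bar n_T,\bar l_T)$. Writing $0\in\partial J_{\kappa,\vartheta}(\bar n_T,\bar l_T)$ using the subdifferential of the $L^2$-norm produces elements $\xi_n,\xi_l\in L^2(Q_A)$ with $\|\xi_n\|,\|\xi_l\|\le1$ (equal to the normalizations of $\bar n_T,\bar l_T$ when these are nonzero) such that, for every direction $(\psi_T,\phi_T)$ with corresponding adjoint $(\psi_n,\psi_l)$,
\begin{equation*}
\int_\Xi\bar n\,\psi_n+\int_{\Xi'}\bar l\,\psi_l+\kappa\int_{Q_A}\xi_n\psi_T+\vartheta\int_{Q_A}\xi_l\phi_T+\int_{Q_A}m_0\psi_n(0)+\int_{Q_A}f_0\psi_l(0)=0.
\end{equation*}
I then choose the controls $v=\bar n\,\chi_\Xi$, $u=\bar l\,\chi_{\Xi'}$ and substitute into the duality identity; the Euler equation rewrites as $\int_{Q_A}m(T)\psi_T+\int_{Q_A}f(T)\phi_T=-\kappa\int\xi_n\psi_T-\vartheta\int\xi_l\phi_T$. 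Taking $\phi_T\equiv 0$ and then $\psi_T\equiv 0$ and using arbitrariness of the test data gives $m(\cdot,\cdot,T)=-\kappa\xi_n$, $f(\cdot,\cdot,T)=-\vartheta\xi_l$, so that $\|m(\cdot,\cdot,T)\|_{L^2(Q_A)}\le\kappa$ and $\|f(\cdot,\cdot,T)\|_{L^2(Q_A)}\le\vartheta$.

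The main obstacle is the coercivity step, because the observability inequality (2.3) only controls the traces $n(\cdot,\cdot,0),l(\cdot,\cdot,0)$ by the observations, not $(n_T,l_T)$ themselves; that is precisely why the penalization by $\kappa\|n_T\|+\vartheta\|l_T\|$ (rather than $\|n(0)\|+\|l(0)\|$) is needed, and why the dichotomy argument above must combine both cases carefully. A secondary point is to verify that the constant $C_T$ in Theorem 2.1, and in particular the constant in the duality identity, do not depend on $p$, which is already guaranteed by Remark 2.2 and Proposition 2.1.
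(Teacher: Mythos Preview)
Your argument is correct, but it follows a different (though closely related) route from the paper's. You work on the adjoint side with the Fabre--Puel--Zuazua functional
\[
J_{\kappa,\vartheta}(n_T,l_T)=\tfrac12\!\int_\Xi n^2+\tfrac12\!\int_{\Xi'} l^2+\kappa\|n_T\|+\vartheta\|l_T\|+\int_{Q_A}m_0 n(0)+\int_{Q_A}f_0 l(0),
\]
whose minimizer, through the subdifferential of the norm, delivers the controls $v=\bar n\chi_\Xi$, $u=\bar l\chi_{\Xi'}$ with $\|m(T)\|\le\kappa$, $\|f(T)\|\le\vartheta$ directly. The paper instead minimizes the primal functional
\[
J_{\epsilon,\theta}(v_m,v_f)=\tfrac12\!\int_\Xi v_m^2+\tfrac12\!\int_{\Xi'} v_f^2+\tfrac{1}{2\epsilon}\!\int_{Q_A}m^2(T)+\tfrac{1}{2\theta}\!\int_{Q_A}f^2(T)
\]
over the control space, identifies the optimal control with the restriction of the adjoint, and uses the observability inequality together with Young's inequality to obtain the quantitative bounds of Lemma~2.3; Theorem~2.2 then follows by setting $\kappa=\epsilon C_2(\|m_0\|^2+\|f_0\|^2)$ and $\vartheta=\theta C_4(\|m_0\|^2+\|f_0\|^2)$. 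Both proofs hinge on the same observability inequality (Theorem~2.1) and the same duality identity, which you derived correctly. What the paper's route buys, and what is actually exploited in Section~3, is the explicit cost estimate $\int_\Xi n_\epsilon^2+\int_{\Xi'} l_\theta^2\le C(\|m_0\|^2+\|f_0\|^2)$ uniformly in $\epsilon,\theta,p$; your approach yields this too (compare $J_{\kappa,\vartheta}(\bar n_T,\bar l_T)\le J_{\kappa,\vartheta}(0,0)=0$ combined with the observability inequality), but you do not state it. A minor remark on presentation: your coercivity dichotomy is a bit loose (``along a subsequence'' in the first case does not by itself give $J^k\to\infty$ for the full sequence); the clean version is to bound $J^k/\rho_k\ge \tfrac{\rho_k}{2}\hat O_k - C'\sqrt{\hat O_k}+\min(\kappa,\vartheta)$ using the observability inequality and then minimize in $\hat O_k\ge 0$ to get $\liminf J^k/\rho_k\ge\min(\kappa,\vartheta)$.
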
			For $p\in L^2(Q_T)$, $\epsilon>0$ and $\theta>0,$ we consider the functional $J_{\epsilon,\theta}$ defined by:
	\begin{equation}
		J_{\epsilon,\theta}(v_m,v_f)=\dfrac{1}{2}\int_{\Xi}v_{m}^{2}dxdadt+\dfrac{1}{2}\int_{\Xi'}v_{f}^{2}dxdadt+\dfrac{1}{2\epsilon}\int_{0}^{A}\int_{\Omega}m^2(x,a,T)dxda+\dfrac{1}{2\theta}\int_{0}^{A}\int_{\Omega}f^2(x,a,T)dxda
	\end{equation} 
	where $(m,f)$ is the solution of the following system
	\begin{equation}
\left\{ 
\begin{array}{ccc}
m_t+m_a-K_m\Delta m+\mu_m m&=\chi_{\Xi} v_{m}&\text{ in }Q ,\\ 
f_t+f_a-K_f\Delta f+\mu_f f&=\chi_{\Xi'}v_{f}&\text{ in }Q,\\ 
m(\sigma,a,t)=f(\sigma,a,t)&=0&\text{ on }\Sigma, \\
m(x,a,0)=m_0\quad f(x,a,0)=f_0&&\text{ in }Q_A,\\
m(x,0,t)=(1-\gamma)\int_{0}^{A}\beta(a,p)fda,\quad f(x,0,t)=\gamma \int_{0}^{A}\beta(a,p)fda&& \text{ in }Q_T.\\
\end{array}
\right.
\label{21}
\end{equation}
The result of the Theorem 2.2 is obtained from the following Lemma
\begin{lemma}
The functional $J_{\epsilon,\theta}$	 are continuous, strictly convex and coercive. Consequently $J_{\epsilon,\theta}$ reaches its minimum at a point $(v_{m\epsilon},v_{f\kappa})\in L^2(\Xi)\times L^2(\Xi').$\\ 
Moreover, setting $(m_{\epsilon},f_{\theta})$ the associated solution of $(\ref{21})$ and $(n_{\epsilon},l_{\theta})$ the solution $(\ref{3})$ with $n_{\epsilon}(x,a,T)=-\dfrac{1}{\epsilon}m_{\epsilon}(x,a,T)$ and $l_{\theta}(x,a,T)=-\dfrac{1}{\theta}f_{\theta}(x,a,T)$ one has $\chi_{\Xi}v_{m,\epsilon}=\chi_{\Xi}n_{\epsilon},\quad \chi_{\Xi'}v_{f\theta}=\chi_{\Xi'}l_{\theta},$ and there exist $C_{i}>0\quad i\in\{ 1,2,3,4\}$  independent of $p$, $\epsilon$ and $\theta$, such that
\[\int_{\Xi}n^{2}_{\epsilon}dxdadt\leq C_1\left(\int_{0}^{A}\int_{\Omega}m^{2}_{0}dxda+\int_{0}^{A}\int_{\Omega}f^{2}_{0}dxda\right),\]
\[\int_{0}^{A}\int_{\Omega}m^{2}_{\epsilon}(x,a,T)dxda\leq \epsilon C_2\left(\int_{0}^{A}\int_{\Omega}m^{2}_{0}dxda+\int_{0}^{A}\int_{\Omega}f^{2}_{0}dxda\right),\]
\[\int_{\Xi'}l^{2}_{\theta}dxdadt\leq C_3\left(\int_{0}^{A}\int_{\Omega}m^{2}_{0}dxda+\int_{0}^{A}\int_{\Omega}f^{2}_{0}dxda\right)\]
and 
\[\int_{0}^{A}\int_{\Omega}f^{2}_{\theta}(x,a,T)dxda\leq \theta C_4\left(\int_{0}^{A}\int_{\Omega}m^{2}_{0}dxda+\int_{0}^{A}\int_{\Omega}f^{2}_{0}dxda\right).\]
\end{lemma}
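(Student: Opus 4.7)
For the existence of a minimizer, the control-to-state map $(v_m,v_f)\mapsto(m,f)$ of \eqref{21} is affine and Lipschitz continuous from $L^2(\Xi)\times L^2(\Xi')$ into the state space, with constants uniform in $p$ by Proposition 2.1, so $J_{\epsilon,\theta}$ is continuous. Strict convexity is immediate from the two control-penalty terms, while the bound $J_{\epsilon,\theta}(v_m,v_f)\ge\tfrac12\|v_m\|_{L^2(\Xi)}^2+\tfrac12\|v_f\|_{L^2(\Xi')}^2$ yields coercivity; the two remaining terms are convex as squares of affine maps. The direct method of the calculus of variations then produces the unique minimizer $(v_{m\epsilon},v_{f\theta})$.

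The heart of the argument is a duality identity obtained by multiplying the two equations of \eqref{21} by $n$ and $l$ respectively, where $(n,l)$ solves the adjoint system \eqref{3}, and integrating by parts in $(x,a,t)$. The terminal conditions $n(A)=l(A)=0$ and the homogeneous Dirichlet data dispose of the $a=A$ trace and of the lateral boundary. The crucial cancellation is that the $a=0$ boundary contributions $\int_{Q_T} m(x,0,t)n(x,0,t)\,dxdt+\int_{Q_T} f(x,0,t)l(x,0,t)\,dxdt$ are exactly compensated by the source $(1-\gamma)\beta n(x,0,t)+\gamma\beta l(x,0,t)$ of the adjoint $l$-equation: inserting the renewal laws $m(x,0,t)=(1-\gamma)\int_0^A\beta f\,da$ and $f(x,0,t)=\gamma\int_0^A\beta f\,da$ and applying Fubini produces the same integral. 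What remains is
$$
\int_\Xi v_m\,n+\int_{\Xi'} v_f\,l+\int_{Q_A}\bigl[m_0\,n(\cdot,\cdot,0)+f_0\,l(\cdot,\cdot,0)\bigr]dxda=\int_{Q_A}\bigl[m(\cdot,\cdot,T)n(\cdot,\cdot,T)+f(\cdot,\cdot,T)l(\cdot,\cdot,T)\bigr]dxda.
$$
Choosing $(n_\epsilon,l_\theta)$ with terminal data $n_\epsilon(T)=-m_\epsilon(T)/\epsilon$ and $l_\theta(T)=-f_\theta(T)/\theta$, applying this identity to the linearised state $(\tilde m,\tilde f)$ driven by perturbation controls $(\phi,\psi)$ with zero initial data, and equating with the Euler--Lagrange condition $DJ_{\epsilon,\theta}(v_{m\epsilon},v_{f\theta})[\phi,\psi]=0$ forces $\chi_\Xi v_{m\epsilon}=\chi_\Xi n_\epsilon$ and $\chi_{\Xi'} v_{f\theta}=\chi_{\Xi'} l_\theta$.

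Reapplying the same duality identity to $(m_\epsilon,f_\theta)$ with the original initial data, and to $(n_\epsilon,l_\theta)$, and substituting the terminal and control characterizations just derived gives
$$
\int_\Xi n_\epsilon^2\,dxdadt+\int_{\Xi'} l_\theta^2\,dxdadt+\tfrac{1}{\epsilon}\|m_\epsilon(\cdot,\cdot,T)\|_{L^2(Q_A)}^2+\tfrac{1}{\theta}\|f_\theta(\cdot,\cdot,T)\|_{L^2(Q_A)}^2=-\int_{Q_A}\bigl[m_0\,n_\epsilon(\cdot,\cdot,0)+f_0\,l_\theta(\cdot,\cdot,0)\bigr]dxda.
$$
Cauchy--Schwarz followed by Young's inequality with parameter $\eta>0$ bounds the right-hand side by $\tfrac{\eta}{2}(\|n_\epsilon(0)\|_{L^2(Q_A)}^2+\|l_\theta(0)\|_{L^2(Q_A)}^2)+\tfrac{1}{2\eta}(\|m_0\|_{L^2(Q_A)}^2+\|f_0\|_{L^2(Q_A)}^2)$; Theorem 2.1, whose constant $C_T$ is independent of $p$ by Remark 2.2, then controls $\|n_\epsilon(0)\|^2+\|l_\theta(0)\|^2\le C_T\bigl(\int_\Xi n_\epsilon^2+\int_{\Xi'} l_\theta^2\bigr)$. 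Choosing $\eta=1/C_T$ absorbs the observation integrals into the left-hand side and delivers all four inequalities at once, with constants depending only on $C_T$. The main obstacle in this plan is precisely the non-local $a=0$ cancellation of the middle step: the simultaneous matching of both renewal integrals by the specific adjoint source $(1-\gamma)\beta n(x,0,t)+\gamma\beta l(x,0,t)$ is what makes the duality identity close and, in turn, what guarantees that $C_1,\dots,C_4$ remain independent of $p$, $\epsilon$ and $\theta$.
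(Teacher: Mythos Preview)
Your proof is correct and follows essentially the same route as the paper: derive the duality identity between \eqref{21} and \eqref{3}, use it first (via the Euler--Lagrange condition) to identify the optimal controls with $(\chi_\Xi n_\epsilon,\chi_{\Xi'}l_\theta)$, then again to obtain the key energy identity, and finally apply Young's inequality together with the observability estimate of Theorem~2.1 with the choice of parameter that absorbs the observation terms. Your treatment is in fact more explicit than the paper's, which simply invokes ``the maximum principle'' for the optimality system and writes the two half-identities \eqref{resa}--\eqref{resa1} separately before summing; your discussion of the $a=0$ cancellation is exactly the mechanism behind that summation.
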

\begin{proof}{Lemma 2.3}\\
	 It is easy to check that $J_{\epsilon,\theta}$ is coercive, continuous, and strictly convex. Then, it admits a unique minimiser $(v_{m,\epsilon},v_{f,\theta})$. The maximum principle gives that $$(\chi_{\Xi}v_{m,\epsilon},\chi_{\Xi'}v_{f,\theta})=(\chi_{\Xi}n_{\epsilon},\chi_{\Xi'} l_{\theta})$$
 where the pair $(n_{\epsilon},l_{\theta})$ is the solution of the following system:
 \begin{equation}
\left\{ 
\begin{array}{ccc}
-\dfrac{n_{\epsilon}}{\partial t}-\dfrac{n_{\epsilon}}{\partial a}-K_m\Delta n_{\epsilon}+\mu_m n_{\epsilon}&=0 &\text{ in }Q ,\\ 
-\dfrac{l_{\theta}}{\partial t}-\dfrac{l_{\theta}}{\partial a}-K_f\Delta l_{\theta}+\mu_f l_{\theta}&=(1-\gamma)\beta(a,p)n_{\epsilon}(x,0,t)+\gamma\beta(a,p)l_{\theta}(x,0,t)&\text{ in }Q,\\ 
n_{\epsilon}(\sigma,a,t)=l_{\theta}(\sigma,a,t)&=0&\text{ on }\Sigma, \\
n_{\epsilon}(x,a,T)=-\dfrac{1}{\epsilon}m_{\epsilon}(x,a,T)\quad l_{\theta}(x,a,T)=-\dfrac{1}{\theta}f_{\theta}(x,a,T)&&\text{ in }Q_A,\\
n_{\epsilon}(x,A,t)=0,\quad l_{\theta}(x,A,t)=0&& \text{ in }Q_T.\\
\end{array}
\right.
\label{3a1}
\end{equation} 
 Multiplying the first equation of $(\ref{21})$ by $n_{\epsilon}$ the second equation by $l_{\theta}$ and integrating the results over $Q,$  with $$(\chi_{\Xi}v_{m,\epsilon},\chi_{\Xi'}v_{f,\theta})=(\chi_{\Xi}n_{\epsilon},\chi_{\Xi'}l_{\Theta}),$$   we get:
 \[\int_{\Xi}n^{2}_{\epsilon}dxdadt+\dfrac{1}{\epsilon}\int_{0}^{A}\int_{\Omega}m_{\epsilon}^2(x,a,T)dxda\]\begin{equation}=\int_{0}^{A}\int_{\Omega} m_0n_{\epsilon}(x,a,0)dxda+(1-\gamma)\int_{0}^{T}\int_{0}^{A}\int_{\Omega}\beta(a,p)f n(x,0,t)dxdadt,\label{resa}\end{equation}
 and
  \[
 \dfrac{1}{\theta}\int_{0}^{A}\int_{\Omega}f_{\theta}^2(x,a,T)dxda +(1-\gamma)\int_{0}^{T}\int_{0}^{A}\int_{\Omega}\beta(a,p)f n(x,0,t)dxdadt+\int_{\Xi'}l^{2}_{\epsilon}dxdadt\]\begin{equation}
 =\int_{0}^{A}\int_{\Omega} f_0l_{\theta}(x,a,0)dxda.\label{resa1}
  \end{equation}
  Combining $(\ref{resa})$ and $(\ref{resa1}),$ we obtain:
   \[\int_{\Xi}n^{2}_{\epsilon}dxdadt+\dfrac{1}{\epsilon}\int_{0}^{A}\int_{\Omega}m_{\epsilon}^2(x,a,T)dxda+\dfrac{1}{\theta}\int_{0}^{A}\int_{\Omega}f_{\theta}^2(x,a,T)dxda+\int_{\Xi'}l^{2}_{\theta}dxdadt\]\[=\int_{0}^{A}\int_{\Omega} m_0n_{\epsilon}(x,a,0)dxd+\int_{0}^{A}\int_{\Omega} f_0l_{\epsilon}(x,a,0)dxda.\]  
     Using the inequality of Young, we obtain for any $\delta>0 $ 
 \[\int_{\Xi}n^{2}_{\epsilon}dxdadt+\dfrac{1}{\epsilon}\int_{0}^{A}\int_{\Omega}m_{\epsilon}^2(x,a,T)dxda+\dfrac{1}{\theta}\int_{0}^{A}\int_{\Omega}f_{\theta}^2(x,a,T)dxda+\int_{\Xi'}l^{2}_{\theta}dxdadt\]\[\leq \dfrac{\delta}{2}\int_{0}^{A}\int_{\Omega}m^{2}_0dxda+ \dfrac{1}{2\delta}\int_{0}^{A}\int_{\Omega}n^{2}_{\epsilon}(x,a,0)dxda+\dfrac{\delta}{2}\int_{0}^{A}\int_{\Omega}f^{2}_0dxda+ \dfrac{1}{2\delta}\int_{0}^{A}\int_{\Omega}l^{2}_{\theta}(x,a,0)dxda.\]
 Using the observability inequality $(\ref{12})$ and choosing $\delta=C_{T}$ where $C_{T}$ is given in $(\ref{12}),$ we obtain
  \[\dfrac{1}{2}\int_{\Xi}n^{2}_{\epsilon}dxdadt+\dfrac{1}{\epsilon}\int_{0}^{A}\int_{\Omega}m_{\epsilon}^2(x,a,T)dxda+\dfrac{1}{\theta}\int_{0}^{A}\int_{\Omega}f_{\theta}^2(x,a,T)dxda+\dfrac{1}{2}\int_{\Xi'}l^{2}_{\theta}dxdadt\]\[\leq\dfrac{C_{T}}{2}\left(\int_{0}^{A}\int_{\Omega}m^{2}_0dxda+\int_{0}^{A}\int_{\Omega}f^{2}_0dxda\right).\]  
  From these inequalities, we deduce the results of the Lemma 2.3 necessary to the proof of the main result. \\
  Moreover, by asking $$\kappa=\epsilon C_2\left(\int_{0}^{A}\int_{\Omega}m^{2}_{0}dxda+\int_{0}^{A}\int_{\Omega}f^{2}_{0}dxda\right),$$ $$\vartheta=\theta C_4\left(\int_{0}^{A}\int_{\Omega}m^{2}_{0}dxda+\int_{0}^{A}\int_{\Omega}f^{2}_{0}dxda\right)$$
and $$(v_{\kappa},v_{\vartheta})=(v_{m,\epsilon},v_{f,\theta}),$$ we have the desired result of the Theorem 2.2.
  \end{proof}
  Now, we consider the following system:
  	\begin{equation}
\left\{ 
\begin{array}{ccc}
\dfrac{\partial m_{\epsilon}(p)}{\partial t}+\dfrac{\partial m_{\epsilon}(p)}{\partial a}-K_m\Delta m_{\epsilon}(p)+\mu_m m_{\epsilon}(p)&=\chi_{\Xi}n_{\epsilon}(p)&\text{ in }Q ,\\ 
\dfrac{\partial f_{\theta}(p)}{\partial t}+\dfrac{\partial f_{\theta}(p)}{\partial a}-K_f\Delta f_{\theta}(p)+\mu_f f_{\theta}(p)&=\chi_{\Xi'}l_{\theta}(p)&\text{ in }Q,\\ 
m_{\theta}(p)(\sigma,a,t)=f_{\theta}(p)(\sigma,a,t)&=0&\text{ on }\Sigma, \\
m_{\epsilon}(p)(x,a,0)=m_0(x,a)\quad f_{\theta}(p)(x,a,0)=f_0&&\text{ in }Q_A,\\
m_{\epsilon}(x,0,t)=(1-\gamma)\int_{0}^{A}\beta(a,p)f_{\theta}(p)da,\quad f_{\theta}(x,0,t)=\gamma \int_{0}^{A}\beta(a,p)f_{\theta}(p)da&& \text{ in }Q_T,
\end{array}
\right.
\label{4a1}
\end{equation}
where $(n_{\epsilon}(p),l_{\theta}(p))$ is the solution of the cascade system $(\ref{3a1})$ that minimizes the functional $J_{\epsilon,\theta}.$ We have the following result:
\begin{lemma}
	Under the assumptions of the Theorem 1.1, there exists $C>0$ independent of $p$, $\epsilon$ and $\theta$ such that the solution $(m_{\epsilon}(p),f_{\theta}(p))$ verifies the following inequalities:
	\[
	\int_{0}^{A}\int_{\Omega}m_{\epsilon}^2(p)(x,a,T)dxda+\int_{0}^{T}\int_{0}^{A}\int_{\Omega}(1+\mu_m )m_{\epsilon}^2(p)dxdadt+k_m\int_{0}^{T}\int_{0}^{A}\int_{\Omega}|\nabla m_{\epsilon}(p)|^2dxdadt\]
	\begin{equation}
		\leq C\left(\int_{0}^{A}\int_{\Omega}m_{0}^{2}dxda+\int_{0}^{A}\int_{\Omega}f_{0}^{2}dxda\right)\label{xx}
		\end{equation}	
		and 
		\[
	\int_{0}^{A}\int_{\Omega}f_{\theta}^2(p)(x,a,T)dxda+\int_{0}^{T}\int_{0}^{A}\int_{\Omega}(1+\mu_f )f_{\theta}^2(p)dxdadt+K_f\int_{0}^{T}\int_{0}^{A}\int_{\Omega}|\nabla f_{\theta}(p)|^2dxdadt\]
	\begin{equation}
		\leq C\left(\int_{0}^{A}\int_{\Omega}m_{0}^{2}dxda+\int_{0}^{A}\int_{\Omega}f_{0}^{2}dxda\right).\label{ww}
		\end{equation}			
	\end{lemma}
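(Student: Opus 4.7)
The plan is to derive energy estimates on the system (\ref{4a1}) that mirror those established in the proof of Proposition 2.1, and then absorb the right-hand sides $\chi_{\Xi}n_{\epsilon}(p)$ and $\chi_{\Xi'}l_{\theta}(p)$ into the initial data by invoking the bounds from Lemma 2.3. A crucial structural observation is that the equation for $f_\theta(p)$ decouples from $m_\epsilon(p)$, since its renewal term $f_\theta(x,0,t)=\gamma\int_0^A\beta(a,p)f_\theta da$ involves $f_\theta$ alone; this dictates the order in which the estimates must be carried out.

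First I would treat $f_\theta(p)$ exactly as in Proposition 2.1. Setting $y=e^{-\lambda_0 t}f_\theta(p)$, multiplying the resulting equation by $y$ and integrating over $Q$, the integration by parts in $t$ and $a$ produces the trace at $t=T$, the trace at $a=A$ (which is nonnegative and can be dropped), the term $K_f|\nabla y|^2$, the mass term $(\mu_f+\lambda_0)y^2$, and on the right the contributions $\tfrac{1}{2}\|f_0\|^2_{L^2(Q_A)}$, the nonlocal boundary term at $a=0$ (bounded by $A\|\beta\|_\infty^2\|y\|^2_{L^2(Q)}$ via Cauchy--Schwarz together with $(H_3)(iii)$), and the forcing $\int e^{-\lambda_0 t}\chi_{\Xi'}l_\theta\,y$, treated by Young's inequality. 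Choosing $\lambda_0=A\|\beta\|_\infty^2+3/2$ absorbs the nonlocal term into the left-hand side, yielding
\[
\int_0^A\!\!\int_\Omega y^2(x,a,T)\,dx\,da+\int_Q(1+\mu_f)y^2+K_f\int_Q|\nabla y|^2\leq \tfrac{1}{2}\|f_0\|^2_{L^2(Q_A)}+\tfrac{1}{2}\|l_\theta(p)\|^2_{L^2(\Xi')}.
\]
Reverting to $f_\theta(p)$ by multiplying by $e^{\lambda_0 T}$ and then invoking the third inequality of Lemma 2.3 to control $\|l_\theta(p)\|^2_{L^2(\Xi')}$ by $C_3(\|m_0\|^2+\|f_0\|^2)$ yields the announced estimate (\ref{ww}), with a constant that depends only on $K_f$, $T$, $A$, $\|\beta\|_\infty$ and $C_3$, all independent of $p$, $\epsilon$ and $\theta$.

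Next I would perform the same procedure on the equation for $m_\epsilon(p)$. Testing $e^{-\lambda_0 t}m_\epsilon(p)$ against its equation and integrating by parts, the renewal term at $a=0$ now equals $(1-\gamma)\int_0^A\beta(a,p)f_\theta(p)da$, which by Cauchy--Schwarz and $(H_3)(iii)$ is bounded by $CA\|\beta\|_\infty^2\|f_\theta(p)\|^2_{L^2(Q)}$; this is no longer a nonlocal term in $m_\epsilon$ but a datum, already controlled by the previous step. The source $\chi_\Xi n_\epsilon(p)$ is handled by Young's inequality, after which the first estimate of Lemma 2.3 gives $\|n_\epsilon(p)\|^2_{L^2(\Xi)}\leq C_1(\|m_0\|^2+\|f_0\|^2)$. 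Choosing $\lambda_0$ sufficiently large to absorb the quadratic term in $m_\epsilon(p)$ produced by Young, one obtains (\ref{xx}).

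The only point requiring care is the uniformity of the constants with respect to $p$. This is guaranteed because $\beta$ appears exclusively through its $L^\infty$ bound (assumption $(H_3)(iii)$), and because the constants $C_1$ and $C_3$ of Lemma 2.3 have been shown to be independent of $p$ (ultimately because the observability constant $C_T$ of Theorem 2.1 depends on $\|\beta\|_\infty$ but not on $p$). No new obstacle arises beyond those already resolved in Proposition 2.1 and Lemma 2.3; the proof is essentially a bookkeeping exercise combining these two ingredients.
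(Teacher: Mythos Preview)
Your proposal is correct and follows essentially the same approach as the paper's own proof: the exponential weight $e^{-\lambda_0 t}$, the energy identity obtained by testing against the solution, Cauchy--Schwarz and Young to handle the nonlocal renewal term and the source, the choice $\lambda_0=A\|\beta\|_\infty^2+3/2$ for the $f_\theta$ equation, and finally the bounds of Lemma 2.3 to absorb $\|l_\theta\|_{L^2(\Xi')}^2$ and $\|n_\epsilon\|_{L^2(\Xi)}^2$ into the initial data. The paper likewise treats $f_\theta$ first (exploiting the decoupling you noted) and then feeds the resulting $L^2(Q)$ bound on $f_\theta$ into the $m_\epsilon$ estimate, choosing there a smaller $\lambda_0=3/2$ since the renewal term for $m_\epsilon$ no longer involves $m_\epsilon$ itself.
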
 
	\begin{proof}{of the Lemma 2.4}\\
			 Let \[(y_{\epsilon}(p),z_{\theta}(p))=(e^{-\lambda_{0}t}m_{\epsilon}(p),e^{-\gamma_0 t}f_{\theta}(p)).\]
 The function $y_{\epsilon}(p)$ and $z_{\theta}(p)$ verify 
 \begin{equation}
 	 \dfrac{\partial y_{\epsilon}(p)}{\partial t}+\dfrac{\partial y_{\epsilon}(p)}{\partial a}-K_{m}\Delta y_{\epsilon}(p)+(\lambda_0 +\mu_{m}(a))y_{\epsilon}(p)=\chi_{\Xi}e^{-\lambda_0 t}n_{\epsilon}(p)\label{3433}
 	 \end{equation}
 	 and
\begin{equation}
 	 \dfrac{\partial z_{\theta}(p)}{\partial t}+\dfrac{\partial z_{\theta}(p)}{\partial a}-k_{f}\Delta z_{\theta}(p)+(\gamma_0 +\mu_{f}(a))z_{\theta}(p)=\chi_{\Xi'}e^{-\gamma_{0}t}l_{\theta}(p).\label{344}
 	 \end{equation}

Multiplying the equality $(\ref{3433})$ and the equality $(\ref{344})$ respectively  by $y_{\epsilon}(p)$ and $z_{\theta}(p)$ and integrating on $Q$ we obtain:
\[
\dfrac{1}{2}\int_{0}^{A}\int_{\Omega}y_{\epsilon}^{2}(p)(x,a,T)dxda+\dfrac{1}{2}\int_{0}^{T}\int_{\Omega}y_{\epsilon}^{2}(p)(x,A,t)dxdt+\]\[K_m\int_{0}^{T}\int_{0}^{A}\int_{\Omega}\| \nabla y_{\epsilon}(p)\|^{2}_{L^2(\Omega)}dxdadt+\int_{0}^{T}\int_{0}^{A}\int_{\Omega}(\mu_m+\lambda_0)y_{\epsilon}^{2}(p) dxdadt\]
\begin{equation}
=\dfrac{1}{2}\|m_0\|^{2}_{L^2(\Omega\times (0,A))}+\gamma\int_{0}^{T}\int_{\Omega}\left(\int_{0}^{A}\beta(a,p)z_{\theta}(p)da\right)^2 dxdt+\int_{0}^{T}\int_{0}^{A}\int_{\Omega}\chi_{\Xi}e^{-\lambda_0 t}n_{\epsilon}(p)y_{\epsilon}(p)dxdadt\label{23c}
\end{equation}
and
\[
\dfrac{1}{2}\int_{0}^{A}\int_{\Omega}z_{\theta}^{2}(p)(x,a,T)dxda+\dfrac{1}{2}\int_{0}^{T}\int_{\Omega}z_{\theta}^{2}(p)(x,A,t)dxdt+\]\[K_f\int_{0}^{T}\int_{0}^{A}\int_{\Omega}\| \nabla z_{\theta}(p)\|^{2}_{L^2(\Omega)}dxdadt+\int_{0}^{T}\int_{0}^{A}\int_{\Omega}(\mu_f+\gamma_0)z_{\theta}^{2}(p) dxdadt
\]
\begin{equation}
=\dfrac{1}{2}\|f_0\|^{2}_{L^2(\Omega\times (0,A))}+(1-\gamma)\int_{0}^{T}\int_{\Omega}\left(\int_{0}^{A}\beta(a,p)z_{\theta}(p)da\right)^2dxdt+\int_{0}^{T}\int_{0}^{A}\int_{\Omega}\chi_{\Xi'}e^{-\gamma_0 t}l_{\theta}(p)z_{\theta}(p)dxdadt.\label{235}
\end{equation}
Using Young inequality, Cauchy Schwarz inequality and the fact that $\beta$ is $L^\infty,$ we get that:
\[\gamma\int_{0}^{T}\int_{\Omega}\left(\int_{0}^{A}\beta(a,p)z_{\theta}(p)da\right)^2dxdt+\int_{0}^{T}\int_{0}^{A}\int_{\Omega}\chi_{\Xi}n_{\epsilon}(p)y_{\epsilon}(p)dxdadt\]\[\leq A\|\beta\|^{2}_{\infty}\|z_{\theta}(p)\|^{2}_{L^2(Q)}+\dfrac{1}{2}\|y_{\epsilon}(p)\|^{2}_{L^2(Q)}+\dfrac{ 1}{2}\|n_{\epsilon}(p)\|^{2}_{L^2(\Xi)}\]
and
\[(1-\gamma)\int_{0}^{T}\int_{\Omega}\left(\int_{0}^{A}\beta(a,p)z_{\theta}(p)da\right)^2dxdt+\int_{0}^{T}\int_{0}^{A}\int_{\Omega}\chi_{\Xi}l_{\theta}(p)z_{\theta}(p)dxdadt\]\[\leq A\|\beta\|^{2}_{\infty}\|z_{\theta}(p)\|^{2}_{L^2(Q)}+\dfrac{1}{2}\|z_{\theta}(p)\|^{2}_{L^2(Q)}+\dfrac{ 1}{2}\|l_{\theta}(p)\|^{2}_{L^2(\Xi')}.\]
Therefore, choosing $\gamma_0=(A\|\beta\|^{2}_{\infty}+3/2),$ we get:
\[
	\int_{0}^{A}\int_{\Omega}z_{\theta}^2(p)(x,a,T)dxda+\int_{0}^{T}\int_{0}^{A}\int_{\Omega}(1+\mu_f )z_{\theta}^2(p)dxdadt+K_f\int_{0}^{T}\int_{0}^{A}\int_{\Omega}|\nabla z_{\theta}(p)|^2dxdadt\]
	\begin{equation}
		\leq 	\dfrac{1}{2}\|f_0\|^{2}_{L^2(\Omega\times (0,A))}+\dfrac{ 1}{2}\|l_{\theta}(p)\|^{2}_{L^2(\Xi')}	.
		\end{equation}	
		Finally, applying the result of the Lemma 2.3 in the above inequality, we get:
		\[
	\int_{0}^{A}\int_{\Omega}z_{\theta}^2(p)(x,a,T)dxda+\int_{0}^{T}\int_{0}^{A}\int_{\Omega}(1+\mu _f)z_{\theta}^2(p)dxdadt+K_f\int_{0}^{T}\int_{0}^{A}\int_{\Omega}|\nabla z_{\theta}|^2dxdadt\]
	\begin{equation}
		\leq C\left(\int_{0}^{A}\int_{\Omega}m_{0}^{2}dxda+\int_{0}^{A}\int_{\Omega}f_{0}^{2}dxda\right)	\label{qqq}	
		\end{equation}			and then the inequality $(\ref{ww})$ holds.\\
		Likewise, choosing $\lambda_0=3/2$, we get 
			\[
	\int_{0}^{A}\int_{\Omega}y_{\epsilon}^2(p)(x,a,T)dxda+\int_{0}^{T}\int_{0}^{A}\int_{\Omega}(1+\mu_m )y_{\epsilon}^2(p)dxdadt+K_m\int_{0}^{T}\int_{0}^{A}\int_{\Omega}|\nabla y_{\epsilon}(p)|^2dxdadt\]
	\begin{equation}
		\leq\dfrac{1}{2}\|m_0\|^{2}_{L^2(\Omega\times (0,A))}+\|\beta\|^{2}_{\infty}\|z\|^{2}_{L^2(Q)}+\dfrac{ 1}{2}\|n_{\epsilon}(p)\|^{2}_{L^2(\Xi)}.\label{xxx}
		\end{equation}		
			Using the Lemma 2.3 and the inequality $(\ref{qqq})$ we obtain
				\[
	\int_{0}^{A}\int_{\Omega}y_{\epsilon}^2(p)(x,a,T)dxda+\int_{0}^{T}\int_{0}^{A}\int_{\Omega}(1+\mu_m )y_{\epsilon}^2(p)dxdadt+K_m\int_{0}^{T}\int_{0}^{A}\int_{\Omega}|\nabla y_{\epsilon}(p)|^2dxdadt\]
	\begin{equation}
		\leq C\left(\int_{0}^{A}\int_{\Omega}m_{0}^{2}dxda+\int_{0}^{A}\int_{\Omega}f_{0}^{2}dxda\right).
		\label{xxxx}
		\end{equation} 
	As $(y_{\epsilon}(p),z_{\theta}(p))=(e^{-\lambda_{0}t}m_{\epsilon}(p),e^{-\gamma_0 t}f_{\theta}(p)),$ we deduce from $(\ref{qqq})$ and $(\ref{xxxx})$ the result $(\ref{xx})$ and $(\ref{ww}).$
			\end{proof}
				We have now the necessary ingredients for the proof of Theorem 1.1.
	\section{Proof of Theorem 1.1}
	In the following lines, we establish the existence of a fixed point for the previous auxiliary problem and we consider the limit when $(\epsilon,\theta)$ goes to $(0,0)$. Indeed, we consider that $(H_3)$ holds and we suppose for a sake of simplicity that $\lambda(0)=\lambda(A)=0.$
		Let's define now the operator
		\[\Lambda: L^{2}(Q_T)\longrightarrow L^{2}(Q_T)\quad p\longmapsto \int_{0}^{A}\lambda (a)m_{\epsilon}(p)da\]
		where the pair $(m_{\epsilon}(p),f_{\epsilon}(p))$ is the solution of the following cascade system:
		\begin{equation}
\left\{ 
\begin{array}{ccc}
\dfrac{\partial m_{\epsilon}(p)}{\partial t}+\dfrac{\partial m_{\epsilon}(p)}{\partial a}-K_m\Delta m_{\epsilon}(p)+\mu_m m_{\epsilon}(p)&=\chi_{\Xi}n_{\epsilon}(p)&\text{ in }Q ,\\ 
\dfrac{\partial f_{\theta}(p)}{\partial t}+\dfrac{\partial f_{\theta}(p)}{\partial a}-K_f\Delta f_{\theta}(p)+\mu_f f_{\theta}(p)&=\chi_{\Xi'}l_{\theta}(p)&\text{ in }Q,\\ 
m_{\theta}(p)(\sigma,a,t)=f_{\theta}(p)(\sigma,a,t)&=0&\text{ on }\Sigma, \\
m_{\epsilon}(p)(x,a,0)=m_0\quad f_{\theta}(p)(x,a,0)=f_0&&\text{ in }Q_A,\\
m_{\epsilon}(p)(x,0,t)=(1-\gamma)\int_{0}^{A}\beta(a,p)f_{\theta}(p)da,\quad f_{\theta}(p)(x,0,t)=\gamma \int_{0}^{A}\beta(a,p)f_{\theta}(p)da&& \text{ in }Q_T,
\end{array}
\right.
\label{8}
\end{equation}
and $(n_{\epsilon}(p),l_{\theta}(p))$ the corresponding  minimizer of $J_{\epsilon,\theta}.$\\
We have the following result:
\begin{proposition}
	Under the assumption of the Theorem 1.1, the operator $\Lambda$ is continuous, bounded, and compact on $L^{2}(Q_T)$. Then $\Lambda$ admits a fixed point.
\end{proposition}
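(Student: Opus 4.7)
The plan is to verify the hypotheses of Schauder's fixed point theorem for $\Lambda$ on a suitable closed, bounded, convex subset of $L^{2}(Q_T)$. The combination of Lemma~2.3 and Lemma~2.4 provides a uniform control on the solution $(m_{\epsilon}(p),f_{\theta}(p),n_{\epsilon}(p),l_{\theta}(p))$ which is independent of $p$ and which will drive every step.

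First, for boundedness, I would use Cauchy--Schwarz in the age variable together with $(H_{4})$ to write
\[
\|\Lambda(p)\|_{L^{2}(Q_T)}^{2}\leq A\,\|\lambda\|_{\infty}^{2}\,\|m_{\epsilon}(p)\|_{L^{2}(Q)}^{2},
\]
and then invoke Lemma~2.4 to bound the right-hand side by a constant $R^{2}$ depending only on $(m_{0},f_{0})$. Thus $\Lambda$ sends all of $L^{2}(Q_T)$ into a closed ball $B_{R}$ of $L^{2}(Q_T)$, so in particular $\Lambda(B_{R})\subset B_{R}$.

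Next, for compactness I would rely on the Aubin--Lions lemma. The estimate of Lemma~2.4 already yields a uniform bound on $\Lambda(p)$ in $L^{2}(0,T;H^{1}_{0}(\Omega))$. To bound the time derivative, I would multiply the first equation of $(\ref{8})$ by $\lambda(a)$ and integrate over $(0,A)$; using $\lambda(0)=\lambda(A)=0$ to kill the boundary terms in age, and $(H_{5})$ to control the mortality contribution, I obtain
\[
\partial_{t}\Lambda(p)=K_{m}\Delta\Lambda(p)+\int_{0}^{A}\lambda'(a)m_{\epsilon}(p)\,da-\int_{0}^{A}\lambda(a)\mu_{m}(a)m_{\epsilon}(p)\,da+\int_{0}^{A}\lambda(a)\chi_{\Xi}n_{\epsilon}(p)\,da,
\]
whose right-hand side is bounded in $L^{2}(0,T;H^{-1}(\Omega))$ uniformly in $p$ thanks to Lemma~2.3 and Lemma~2.4. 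Aubin--Lions then gives that $\Lambda$ sends bounded subsets of $L^{2}(Q_T)$ into relatively compact subsets of $L^{2}(Q_T)$.

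For continuity, I would take $p_{n}\to p$ strongly in $L^{2}(Q_T)$ and show $\Lambda(p_{n})\to \Lambda(p)$. The uniform estimates let me extract a subsequence along which $(m_{\epsilon}(p_{n}),f_{\theta}(p_{n}),n_{\epsilon}(p_{n}),l_{\theta}(p_{n}))$ converges weakly in the natural spaces. The Lipschitz property $(H_{3})(ii)$ combined with the $L^{\infty}$ bound $(H_{3})(iii)$ and dominated convergence allows the nonlinear coefficients $\beta(a,p_{n})$ to be passed to the limit in all the coupling terms. Identifying the weak limit as $(m_{\epsilon}(p),f_{\theta}(p),n_{\epsilon}(p),l_{\theta}(p))$ requires the strict convexity and hence uniqueness of the minimiser of $J_{\epsilon,\theta}$, which forces the whole sequence to converge. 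The compactness step above then upgrades the weak convergence of $m_{\epsilon}(p_{n})$ to strong convergence of $\Lambda(p_{n})$ in $L^{2}(Q_T)$. Once these three properties are in hand, Schauder's theorem applied to $\Lambda:B_{R}\to B_{R}$ yields a fixed point. The delicate point, and the one I expect to be the main obstacle, is the continuity: because $(n_{\epsilon}(p),l_{\theta}(p))$ is defined only implicitly as a minimiser, one cannot extract a direct $L^{2}$-stability estimate in $p$, and the argument must go through a compactness-uniqueness loop rather than a quantitative bound.
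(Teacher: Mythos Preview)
Your argument for boundedness and compactness is essentially identical to the paper's: both derive the parabolic equation satisfied by $Y=\Lambda(p)=\int_{0}^{A}\lambda(a)m_{\epsilon}(p)\,da$ (using $\lambda(0)=\lambda(A)=0$), bound the source terms $\int_{0}^{A}\lambda'(a)m_{\epsilon}(p)\,da$ and $\int_{0}^{A}\lambda(a)\chi_{\Xi}n_{\epsilon}(p)\,da$ via Lemma~2.3--2.4, use $(H_{5})$ together with the weighted estimate $\int \mu_{m}m_{\epsilon}^{2}$ from Lemma~2.4 to control $\int_{0}^{A}\lambda\mu_{m}m_{\epsilon}(p)\,da$ in $L^{2}(Q_T)$, and conclude by Aubin--Lions.

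For continuity the two arguments diverge. The paper splits $m_{\epsilon}(p)=m_{1,\epsilon}(p)+m_{2,\epsilon}(p)$ into a piece with zero initial data driven by the optimal control and a piece with zero control carrying the initial data, and then appeals to continuous dependence of the solution on the right-hand side and on the bounded coefficient $\beta(a,p)$; the continuity of $p\mapsto (v_{m,\epsilon}(p),v_{f,\theta}(p))$ is asserted rather than proved. Your route---extract weak limits along $p_{n}\to p$, pass to the limit in the coupled optimality system using $(H_{3})(ii)$--$(iii)$, and invoke uniqueness of the minimiser of $J_{\epsilon,\theta}$ to identify the limit---is genuinely different and more transparent about where the difficulty lies, namely that the control is defined only implicitly. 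What the paper's approach buys is brevity; what yours buys is that it actually confronts the dependence of the optimal control on $p$. Either way the conclusion (Schauder on $B_{R}$) is the same.
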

\begin{proof}{of the Proposition 3.1}\\
The proof will be done in two steps.\\
$$\textbf{Step1: Boundedness and compactness of $\Lambda$}.$$
Let $Y(x,t)=\int_{0}^{A}\lambda(a)m_{\epsilon}(p) da.$ It easy to prove that $Y$ is solution of the following system:
\begin{equation}
\left\{ 
\begin{array}{ccc}
Y_t-K_f\Delta Y+\int_{0}^{A}\mu_m\lambda m_{\epsilon}(p)da&=R(x,t)&\text{ in }Q_T,\\ 
Y(\sigma,t)&=0&\text{ on }\Sigma_T, \\
Y(x,0)=\int_{0}^{A}\lambda(a)m_0da&&\text{ in }\Omega,\\
\end{array}
\right.
\label{2222}
\end{equation}
where \[R(x,t)=\int_{0}^{A}\lambda'(a)m_{\epsilon}(p)da+\int_{0}^{A}\chi_{\Xi}\lambda(a)n_{\epsilon}(p)da.\]
Using the results of the Lemma 2.4, the results of the Theorem 2.1 and the assumption on $\lambda$, there exists, $K>0$ such that 
\[\|R\|_{L^2(Q_T)}\leq K\left(\|m_0\|_{L^2(Q_A)}+\|f_0\|_{L^2(Q_A)}\right)\]
and $Y(.,0)\in L^2(\Omega).$\\
$\textbf{Boundedness of } S(x,t)=\int_{0}^{A}\mu_m\lambda m_{\epsilon}(p)da \text{ in } L^2(Q_T).$\\ 

$$\int_{0}^{T}\int_{\Omega}\left(\int_{0}^{A}\mu_m\lambda m_{\epsilon}(p) da\right)^2dxdt=\int_{0}^{T}\int_{\Omega}\left(\int_{0}^{A}\left(\mu_m\lambda\right)^{\frac{1}{2}}\left((\mu_m\lambda)^{\frac{1}{2}}m_{\epsilon}(p)\right) da\right)^2dxdt.$$ Using the Cauchy Swartz inequality, we obtain 
$$\int_{0}^{T}\int_{\Omega}\left(\int_{0}^{A}\mu_m\lambda m_{\epsilon}(p) da\right)^2dxdt\leq \int_{0}^{A}\mu_m\lambda da \int_{0}^{T}\int_{\Omega}\int_{0}^{A}\mu_m\lambda m^{2}_{\epsilon}(p)dadxdt .$$ 
The Lemma 2.4 and the fact that $\lambda\in C([0,A])$ give that  $$\int_{0}^{T}\int_{\Omega}\int_{0}^{A}\mu_m\lambda m^{2}_{\epsilon}(p)dadxdt<R_1\left(\|m_0\|^{2}_{L^2(Q_A)}+\|f_0\|^{2}_{L^2(Q_A)}\right),$$ where $R_1>0$ is independent of $p,$ $\epsilon$ and $\theta.$
Moreover, as $\lambda\mu_m\in L^{1}(0,A),$ then
$\int_{0}^{A}\lambda\mu_m m_{\epsilon}(p)da$ is bounded in $L^2(Q_T)$ independently of $p,$ $\epsilon$ and $\theta.$\\
Therefore $Y$ is bounded in $L^2(0,T;H^{1}_{0}(\Omega))$ and $\dfrac{\partial Y}{\partial t}$ is bounded in $L^2(0,T;H^{-1}(\Omega))$ independently of $p,$ $\epsilon$ and $\theta.$\\
 	 Hence, using the Lions-Aubin Lemma we conclude that $\Lambda$ is bounded and compact in $L^2(Q_T).$\\
 	 $$\textbf{Step2: Continuity of the operator $\Lambda$}.$$
 	 Let $(m_{1,\epsilon}(p),f_{2,\theta})(p)$ and $(m_{2,\epsilon}(p),f_{1,\theta})(p)$ the solution of the following cascade system
 	  	  	\begin{equation}
\left\{ 
\begin{array}{ccc}
\dfrac{\partial m_{1,\epsilon}(p)}{\partial t}+\dfrac{\partial m_{1,\epsilon}(p)}{\partial a}-K_m\Delta m_{1,\epsilon}(p)+\mu_m m_{1,\epsilon}(p)&=\chi_{\Xi}v_{m,\epsilon}&\text{ in }Q ,\\ 
\dfrac{\partial f_{1,\theta}(p)}{\partial t}+\dfrac{\partial f_{1,\theta}(p)}{\partial a}-K_f\Delta f_{1,\theta}(p)+\mu_f f_{1,\theta}(p)&=\chi_{\Xi'}v_{f,\theta}&\text{ in }Q,\\ 
m_{1,\theta}(p)(\sigma,a,t)=f_{1,\theta}(p)(\sigma,a,t)&=0&\text{ on }\Sigma, \\
m_{1,\epsilon}(p)(x,a,0)=0\quad f_{1,\theta}(p)(x,a,0)=0&&\text{ in }Q_A,\\
m_{1,\epsilon}(p)(x,0,t)=(1-\gamma)\int_{0}^{A}\beta(a,p)f_{1,\theta}(p)da,\quad f_{1,\theta}(p)(x,0,t)=\gamma \int_{0}^{A}\beta(a,p)f_{1,\theta}(p)da&& \text{ in }Q_T,
\end{array}
\right.
\label{88}
\end{equation}
and
	\begin{equation}
\left\{ 
\begin{array}{ccc}
\dfrac{\partial m_{2,\epsilon}(p)}{\partial t}+\dfrac{\partial m_{2,\epsilon}(p)}{\partial a}-K_m\Delta m_{2,\epsilon}(p)+\mu_m m_{2,\epsilon}(p)&=0&\text{ in }Q ,\\ 
\dfrac{\partial f_{2,\theta}(p)}{\partial t}+\dfrac{\partial f_{2,\theta}(p)}{\partial a}-K_f\Delta f_{2,\theta}(p)+\mu_f f_{2,\theta}(p)&=0&\text{ in }Q,\\ 
m_{2,\theta}(p)(\sigma,a,t)=f_{2,\theta}(p)(\sigma,a,t)&=0&\text{ on }\Sigma, \\
m_{2,\epsilon}(p)(x,a,0)=m_0\quad f_{2,\theta}(p)(x,a,0)=f_0&&\text{ in }Q_A,\\
m_{2,\epsilon}(p)(x,0,t)=(1-\gamma)\int_{0}^{A}\beta(a,p)f_{2,\theta}(p)da,\quad f_{2,\theta}(p)(x,0,t)=\gamma \int_{0}^{A}\beta(a,p)f_{2,\theta}(p)da&& \text{ in }Q_T,
\end{array}
\right..
\label{89}
\end{equation}
Due of the Lemma 2.3, there exists $(v_{1,\epsilon},v_{2,\theta})\in L^{2}(\Xi)\times L^{2}(\Xi')$ such that
\begin{equation}\|m_{1,\epsilon}(p)(.,.,T)+m_{2,\epsilon}(p)(.,.,T)\|\leq \kappa\text{ and }\|f_{1,\theta}(p)(.,.,T)+f_{2,\theta}(p)(.,.,T)\|\leq \vartheta \label{eere}\end{equation}
 for any $\epsilon>0$ and $\theta>0.$\\
Any suitable control can be chosen, in particular the control of minimum norm in $L^{2}(\Xi)\times L^{2}(\Xi').$  
Let us then consider 
$$m_{\epsilon}(p)=m_{1,\epsilon}(p)+m_{2,\epsilon}(p)\text{ and }f_{\theta}(p)=f_{1,\theta}(p)+f_{2,\theta}(p)$$
where $(v_{1,\epsilon},v_{2,\theta})$ is the optimal control characterised by the Lemma 2.3 such that $(\ref{eere})$ hold and let us consider the operator \begin{equation}
	\Lambda':\left\{
	\begin{array}{ccc}
		L^{2}(Q_T)\rightarrow L^{2}(Q)\\
		p\longmapsto m_{\epsilon}(p)
	\end{array}
	\right.
\end{equation}
The function $\beta(a.p)$ is bounded in $L^{\infty}(Q)$ when $a\in [0,A]$ and $p$ spans $L^2(Q_T).$ Since the solution $(m_{1,\epsilon},f_{1,\theta})$ on the system $(\ref{88})$
 	 depends continuously on the data $(v_{1,\epsilon},v_{2,\theta})$ it follows that the operator $\Lambda'$ is continuous. Moreover, the operator $\Lambda''\left(l\right)=\int_{0}^{A}\lambda(a)l da$ is continuous on $L^2(Q)$ and therefore we conclude that the operator $\Lambda$ is continuous.\\
 	 Since the operator $\Lambda$ is continuous, bounded, and compact on $L^2(Q_T)$ onto $L^2(Q_T)$, Schauder's fixed-point theorem
implies that $\Lambda$ admits a fixed point.\\
\end{proof}
So we get:
\begin{equation}
\left\{ 
\begin{array}{ccc}
\dfrac{\partial m_{\epsilon}}{\partial t}+\dfrac{\partial m_{\epsilon}}{\partial a}-K_m\Delta m_{\epsilon}+\mu_m m_{\epsilon}&=\chi_{\Xi}n_{\epsilon}&\text{ in }Q ,\\ 
\dfrac{\partial f_{\theta}}{\partial t}+\dfrac{\partial f_{\theta}}{\partial a}-K_f\Delta f_{\theta}+\mu_f f_{\theta}&=\chi_{\Xi'}l_{\theta}&\text{ in }Q,\\ 
m_{\theta}((\sigma,a,t)=f_{\theta}(\sigma,a,t)&=0&\text{ on }\Sigma, \\
m_{\epsilon}(x,a,0)=m_0\quad f_{\theta}(x,a,0)=f_0&&\text{ in }Q_A,\\
m(x,0,t)=(1-\gamma)\int_{0}^{A}\beta\left(a,\int_{0}^{A}\lambda(a)m_{\epsilon}da\right)f_{\theta}da,\quad f_{\theta}(x,0,t)=\gamma \int_{0}^{A}\beta\left(a,\int_{0}^{A}\lambda(a)m_{\epsilon}da\right)f_{\theta}da&& \text{ in }Q_T
\end{array}
\right.
\label{8}
\end{equation}
with $$\|m(.,.T)\|_{L^2(Q_A)}<\kappa\text{ and }\|f(.,.T)\|_{L^2(Q_A)}<\vartheta.$$
Finally, if $(\kappa,\vartheta)\longrightarrow (0, 0)$ we get:
 $$(\chi_{\Xi}n_{\epsilon},\chi_{\Xi'}l_{\theta})\rightharpoonup (\chi_{\Xi}v_m,\chi_{\Xi'}v_f)\text{ and }(m_{\epsilon},f_{\theta})\rightharpoonup (m,f)$$ with $(m,f)$ solution of the system $(\ref{1})$
and then we obtain the null controllability of the system $(\ref{1}).$
This achieves the proof of the Theorem 1.1.
\section{Proof of the Theorem 1.2}
\subsection{Proof of the Theorem 1.2-(1)}
\subsubsection{Observability inequality}
In this section, we always consider the following system:
\begin{equation}
\left\{ 
\begin{array}{ccc}
m_t+m_a-K_m\Delta m+\mu_m m&=\chi_{\Theta}v_{m}&\text{ in }Q ,\\ 
f_t+f_a-K_f\Delta f+\mu_f f&=0&\text{ in }Q,\\ 
m(\sigma,a,t)=f(\sigma,a,t)&=0&\text{ on }\Sigma, \\
m(x,a,0)=m_0\quad f(x,a,0)=f_0&&\text{ in }Q_A,\\
m(x,0,t)=(1-\gamma)\int_{0}^{A}\beta(a,p)fda,\quad f(x,0,t)=\gamma \int_{0}^{A}\beta(a,p)fda&& \text{ in }Q_T.\\
\end{array}
\right.
\label{bbb}
\end{equation}	
For every $p$ in $L^2(Q_T),$ under the assumptions of the Theorem 1.2; the controllability problem that is to find $v_m\in L^2(\Theta)$ such that $(m,f)$ solution of the system $(\ref{bbb})$
verifies
$$m(.,.,T)=0\quad x\in \Omega \text{ and } a\in (\varrho,A)$$
is equivalent to the observability inequality stated below.
\begin{proposition}
	Let us assume true the assumption $(H_1)-(H_2)-(H_3),$ for every $T>A-a_2$ and for any $\varrho>0,$ if $h_T(x,a)=0\text{ a.e in } \Omega\times (0,\varrho),$ there exists $C_{\varrho,T}>0$ such that the following inequality 
\begin{equation}
	\int_{0}^{A}\int_{\Omega}h^{2}(x,a,0)dxda+\int_{0}^{A}\int_{\Omega}g^{2}(x,a,0)dxda\leq C_{\varrho,T}\int_{\Theta}h^{2}(x,a,t)dxdadt \label{1000}
\end{equation}
holds, where $(h,g)$ is the solution of the following system
\begin{equation}
\left\{ 
\begin{array}{ccc}
-h_t-h_a-K_m\Delta h+\mu_m h&=0&\text{ in }Q ,\\ 
-g_t-g_a-K_f\Delta g+\mu_f g&=(1-\gamma)\beta(a,p)g(x,0,t)+\gamma\beta(a,p)h(x,0,t)&\text{ in }Q ,\\
h(\sigma,a,t)=g(\sigma,a,t)&=0&\text{ on }\Sigma, \\
h(x,a,T)=h_T\text{, } g(x,a,T)=0&&\text{ in }Q_A,\\
h(x,A,t)=0\text{, }g(x,A,t)=0&& \text{ in }Q_T.\\
\end{array}
\right.
\label{ccc1}
\end{equation}	
\end{proposition}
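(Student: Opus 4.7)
The plan is to split the left-hand side into an $h$-piece and a $g$-piece and to control each by $\int_\Theta h^2\,dxdadt$ alone, using only the tools already developed in Section~2. The key observation is that $h$ satisfies the same homogeneous backward equation as $n$ in~(\ref{rrv}) with $a_1=0$, while the datum $g(\cdot,\cdot,T)\equiv 0$ makes the representation formula (\ref{alp}) for $g$ degenerate into an integral operator acting on the age-zero traces of $h$ and $g$ alone.

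First, since the observation region now extends down to $a_1=0$, I would obtain the $h$-part in two sub-estimates. The bound
\[
\int_0^T\!\!\int_\Omega h^2(x,0,t)\,dxdt\le C_{\varrho,T}\int_\Theta h^2\,dxdadt
\]
is a direct application of inequality~(\ref{14}) of Proposition~2.1, whose hypotheses ($a_1=0$ and $h_T\equiv 0$ on $\Omega\times(0,\varrho)$) are met. The complementary estimate for $\int_0^A\!\!\int_\Omega h^2(x,a,0)\,dxda$ follows by mimicking Proposition~2.4: by the characteristic representation $h(x,a,0)=0$ for $a>A-T$, while for $a\in(0,A-T)$ the function $w(x,\lambda)=\tilde h(x,T+a-\lambda,T-\lambda)$, with $\tilde h=e^{-\int_0^a\mu_m}h$, solves a forward heat equation. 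Applying Proposition~2.3 on a window $(t_0,t_1)\subset(0,T)$ whose image under $(\alpha,s)=(T+a-\lambda,T-\lambda)$ lies in $(0,a_2)\times(0,T)$ and integrating in $a$ yields the desired bound. The strict inequality $T>A-a_2$ keeps $a_2-a$ uniformly bounded below on $(0,A-T)$, hence the parabolic observability constant $c_1 e^{c_2/(a_2-a-\delta)}$ stays finite.

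For the $g$-piece, the degeneration of (\ref{alp}) arising from $g_T\equiv 0$ gives
\[
g(x,a,0)=\int_0^{\min\{T,A-a\}}\!\!\frac{\pi_2(a+s)}{\pi_2(a)}\,e^{sK_f\Delta}\beta(a+s,p)\bigl[(1-\gamma)g(x,0,s)+\gamma h(x,0,s)\bigr]\,ds,
\]
and the analogous identity at $a=0$ closes an integral equation for $t\mapsto\|g(\cdot,0,t)\|_{L^2(\Omega)}^2$. Using $\|\beta\|_\infty<\infty$, the contraction $\|(\pi_2(a+s)/\pi_2(a))e^{sK_f\Delta}\|\le 1$, and Cauchy--Schwarz, I obtain
\[
\|g(\cdot,0,t)\|_{L^2(\Omega)}^2\le C(T,\|\beta\|_\infty)\int_t^T\bigl[\|g(\cdot,0,s)\|_{L^2(\Omega)}^2+\|h(\cdot,0,s)\|_{L^2(\Omega)}^2\bigr]\,ds.
\]
A backward Gronwall argument absorbs the $g$-term and yields $\int_0^T\!\!\int_\Omega g^2(x,0,t)\,dxdt\le C\int_0^T\!\!\int_\Omega h^2(x,0,t)\,dxdt$. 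Re-injecting this into the representation for $g(x,a,0)$, integrating in $a$, and invoking the $h(x,0,t)$ bound delivers the required control of $\int_0^A\!\!\int_\Omega g^2(x,a,0)\,dxda$ by $\int_\Theta h^2\,dxdadt$.

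The step I expect to be most delicate is the Gronwall closure for $\|g(\cdot,0,t)\|_{L^2(\Omega)}^2$: because the $g$-equation is driven by its own age-zero trace, $g$ cannot be expressed explicitly in terms of $h$, and one must verify that the resulting constant depends only on $T$ and $\|\beta\|_\infty$, not on $p$---a property that is indispensable for the later fixed-point argument. A secondary technical point is that a naive transcription of Proposition~2.4 would produce a constant blowing up as $a_1\to 0^+$; the strict gap $T>A-a_2$ is precisely what allows one to push the parabolic observability window $(t_0,t_1)$ strictly inside $(0,T)$ rather than letting $t_1$ reach $a+T-a_1=a+T$, restoring a finite constant.
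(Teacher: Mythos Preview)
Your proof is correct and, for the $h$-piece, essentially coincides with the paper's: both rely on Proposition~2.4 (with $a_1=0$, using $T>A-a_2$) for $\int_0^A\!\int_\Omega h^2(x,a,0)\,dxda$, and on inequality~(\ref{14}) of Proposition~2.2 (using $h_T\equiv 0$ on $\Omega\times(0,\varrho)$) to control $\int_0^T\!\int_\Omega h^2(x,0,t)\,dxdt$.

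The genuine difference lies in the $g$-piece. The paper does \emph{not} use the representation formula for $g$; instead it proves a separate energy estimate (Proposition~4.2): one sets $y=e^{\lambda_0 t}g$, multiplies the $g$-equation by $y$, integrates over $Q$, and absorbs the nonlocal source via Young's inequality to obtain directly
\[
\int_0^A\!\int_\Omega g^2(x,a,0)\,dxda\;\le\;C\int_0^T\!\int_\Omega h^2(x,0,t)\,dxdt,
\]
after which (\ref{14}) closes the argument. Your route---exploit $g_T\equiv 0$ so that the Duhamel representation reduces $g(x,0,t)$ to a Volterra integral in the traces $g(\cdot,0,s),\,h(\cdot,0,s)$, then apply a backward Gronwall---is equally valid and has the advantage of making the $p$-independence of the constant completely transparent (it depends only on $T$, $A$, $\|\beta\|_\infty$). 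The paper's energy method is shorter and avoids the semigroup machinery, but as written it requires choosing $\lambda_0$ against a term that itself grows with $\lambda_0$, a step that needs a more careful bookkeeping than the paper provides; your Gronwall argument sidesteps this entirely. Your concern about a blow-up in the Proposition~2.4 constant as $a_1\to 0^+$ is slightly misplaced: the relevant parabolic window in that proof is $(T+a-a_2,T)$, whose length $a_2-a$ stays bounded below because Lemma~2.1 keeps $a\le a_0<a_2$; the role of $T>A-a_2$ is precisely to guarantee the existence of such an $a_0$.
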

For the Proof of the Proposition 4.1, we also need the following estimate:
		\begin{proposition}
Under 	the assumption $(H_1)$ and $(H_2),$ there exists a constant $C$ such that the pair $(h,g)$ solution of the system $(\ref{ccc1})$ verifies
\begin{equation}
	\int_{0}^{A}\int_{\Omega}g^2(x,a,0)dxda+\int_{0}^{T}\int_{0}^{A}\int_{\Omega}(1+\mu )g^2dxdadt+\int_{0}^{T}\int_{0}^{A}\int_{\Omega}|\nabla g|^2dxdadt\leq C\int_{0}^{T}\int_{\Omega}h^2(x,0,t)dxdt.
		\end{equation}	
		Moreover, we deduce that for $l_T=0\text{ a.e in }\Omega\times(0,\varrho),$ there exists a constant $C_{\varrho,T}$ such that
		$$\int_{0}^{A}\int_{\Omega}g^2(x,a,0)dxda\leq C_{\varrho,T}\int_{0}^{T}\int_{\Omega}h^2(x,0,t)dxdt\leq C_{\varrho,T}\int_{0}^{T}\int_{0}^{a_2}\int_{\omega}h^2(x,a,t)dxdadt$$		
		\end{proposition}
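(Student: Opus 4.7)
The plan is to derive the first inequality by a weighted-energy estimate applied directly to the backward parabolic system satisfied by $g$, and then to combine it with Proposition~2.2 to obtain the ``moreover'' conclusion. The weight is used to absorb the non-local term $(1-\gamma)\beta(a,p)g(x,0,t)$ appearing on the right-hand side.

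First, I would introduce the weighted unknown $y(x,a,t)=e^{\lambda_0 t}g(x,a,t)$, where $\lambda_0>0$ is a constant to be chosen, which satisfies
$$-y_t-y_a-K_f\Delta y+(\mu_f+\lambda_0)y=(1-\gamma)\beta(a,p)y(x,0,t)+\gamma\beta(a,p)e^{\lambda_0 t}h(x,0,t),$$
with terminal and boundary data $y(\cdot,\cdot,T)=0$, $y(x,A,t)=0$, $y(\sigma,\cdot,\cdot)=0$, and $y(\cdot,\cdot,0)=g(\cdot,\cdot,0)$. Multiplying by $y$ and integrating over $Q$, the transport terms produce, thanks to the vanishing of $y$ at $t=T$ and $a=A$,
$$\tfrac{1}{2}\int_0^A\!\!\int_\Omega y^2(x,a,0)\,dx\,da+\tfrac{1}{2}\int_0^T\!\!\int_\Omega y^2(x,0,t)\,dx\,dt+K_f\int_Q|\nabla y|^2+\int_Q(\mu_f+\lambda_0)y^2=\int_Q R\cdot y,$$
where $R$ denotes the right-hand side of the equation for $y$.

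Using $\beta\in L^\infty((0,A)\times\mathbb{R})$, Cauchy--Schwarz in $a$, and Young's inequality with parameters $\epsilon_1,\epsilon_2>0$, I would bound $\int_Q R\cdot y$ by
$$\tfrac{(1-\gamma)^2\|\beta\|_\infty^2 A}{2\epsilon_1}\int_0^T\!\!\int_\Omega y^2(x,0,t)\,dx\,dt+\tfrac{\gamma^2\|\beta\|_\infty^2 A\,e^{2\lambda_0 T}}{2\epsilon_2}\int_0^T\!\!\int_\Omega h^2(x,0,t)\,dx\,dt+\tfrac{\epsilon_1+\epsilon_2}{2}\int_Q y^2.$$
Choosing $\epsilon_1=2(1-\gamma)^2\|\beta\|_\infty^2 A$ so that the $y^2(x,0,t)$ contribution is absorbed into the boundary-age term on the left, and $\lambda_0\geq 1+(\epsilon_1+\epsilon_2)/2$ so that the bulk $y^2$ contribution is absorbed into $(\mu_f+\lambda_0)y^2\geq(1+\mu_f)y^2$, yields the inequality for $y$. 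Since $y(\cdot,\cdot,0)=g(\cdot,\cdot,0)$ and $|y|\geq|g|$, $|\nabla y|\geq|\nabla g|$ on $Q$, the resulting estimate transfers to $g$:
$$\int_0^A\!\!\int_\Omega g^2(x,a,0)+\int_Q(1+\mu_f)g^2+\int_Q|\nabla g|^2\leq C\int_0^T\!\!\int_\Omega h^2(x,0,t)\,dx\,dt,$$
with $C$ depending only on $\|\beta\|_\infty$, $A$, $K_f$, $T$, and in particular independent of $p$.

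For the ``moreover'' conclusion, I would use that $h_T=0$ in $\Omega\times(0,\varrho)$: the method of characteristics applied to the $h$-equation (exactly as in Figure~1(b) and the argument leading to (\ref{14})) gives $h(x,0,t)=0$ for all $t\in(T-\varrho,T)$, so $\int_0^T\!\int_\Omega h^2(x,0,t)=\int_0^{T-\varrho}\!\int_\Omega h^2(x,0,t)$. Applying estimate (\ref{14}) of Proposition~2.2, which is valid here because $a_1=0$ in the present setting of Theorem~1.2--(1), bounds the latter by $C_{\varrho,T}\int_\Theta h^2$; combining this with the first inequality produces the announced bound. The only subtle point is the simultaneous absorption carried out in the central step, where the boundary-age term $\tfrac12\int y^2(x,0,t)$ arising from the transport integration by parts is used to compensate the non-local contribution on the right, while $\lambda_0 y^2$ takes care of the bulk part; no genuinely new difficulty arises compared with the energy estimates of Proposition~2.1 and Lemma~2.4.
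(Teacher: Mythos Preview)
Your proposal is correct and follows essentially the same approach as the paper: both introduce the exponential weight $y=e^{\lambda_0 t}g$, multiply the resulting equation by $y$ and integrate over $Q$, use Young's inequality to split the two non-local source terms, absorb the $y^2(x,0,t)$ contribution into the boundary term $\tfrac12\int y^2(x,0,t)$ coming from the transport integration by parts, and then choose $\lambda_0$ large enough to absorb the bulk $\int_Q y^2$ term; the ``moreover'' part is likewise obtained in both cases by invoking inequality~(\ref{14}) of Proposition~2.2. Your parameter bookkeeping (separate $\epsilon_1,\epsilon_2$, explicit factor $A$ from Cauchy--Schwarz in $a$) is somewhat cleaner than the paper's, which in particular writes the circular-looking condition $\lambda_0>e^{2\lambda_0 T}\|\beta\|_\infty^2+1$, but the underlying argument is the same.
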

		\begin{proof}{of Proposition 4.2}\\
					 We put by \[y=e^{\lambda_{0}t}g.\]
 The function $y$ verifies
 \begin{equation}
 	 -\dfrac{\partial y}{\partial t}-\dfrac{\partial y}{\partial a}-\Delta y+(\lambda_0 +\mu(a))y=\gamma\beta(a,p)e^{\lambda_0 t}h(x,0,t)+(1-\gamma)\beta(a,p)y(x,0,t).\label{34}
 	 \end{equation}
Multiplying the equality $(\ref{34})$ by $y$ and integrating on $Q$, we obtain:
\[
\dfrac{1}{2}\int_{0}^{A}\int_{\Omega}y^{2}(x,a,0)dxda+\dfrac{1}{2}\int_{0}^{T}\int_{\Omega}y^{2}(x,0,t)dxdt+\int_{0}^{T}\int_{0}^{A}\int_{\Omega}\| \nabla y\|^{2}_{L^2(\Omega)}dadt+\int_{0}^{T}\int_{0}^{A}\int_{\Omega}(\mu(a)+\lambda_0)y^{2} dxdadt
\]
\begin{equation}
=\int_{0}^{T}\int_{0}^{A}\int_{\Omega} \left(\gamma\beta(a,p)e^{\lambda_0 t}h(x,0,t)+(1-\gamma)\beta(a,p)y(x,0,t)\right)y dxdadt.\label{23}
\end{equation}
Using the Young inequality and the condition on $\beta,$ we get that:
\[\int_{0}^{T}\int_{0}^{A}\int_{\Omega} \left(\gamma\beta(a,p)e^{\lambda_0 t}h(x,0,t)+(1-\gamma)\beta(a,p)y(x,0,t)\right)y dxdadt\]\[\leq\dfrac{ e^{2\lambda_0 T}\|\beta\|^{2}_{\infty}}{2\delta}\|h(.,0,.)\|^{2}_{L^2(Q_T)}+\dfrac{\delta}{2}\|y\|^{2}_{L^2(Q)}+\dfrac{ e^{2\lambda_0 T}\|\beta\|^{2}_{\infty}}{2\delta}\|g(.,0,.)\|^{2}_{L^2(Q_T)}+\dfrac{\delta}{2}\|y\|^{2}_{L^2(Q)}.\]
Therefore, choosing $\delta=e^{2\lambda_0 T}\|\beta\|^{2}_{\infty},$ we obtain:
\[
\dfrac{1}{2}\int_{0}^{A}\int_{\Omega}y^{2}(x,a,0)dxda+\int_{0}^{T}\int_{0}^{A}\int_{\Omega}\| \nabla y\|^{2}_{L^2(\Omega)}dadt+\int_{0}^{T}\int_{0}^{A}\int_{\Omega}(\mu(a)+\lambda_0)y^{2} dxdadt\]\[\leq \dfrac{1}{2}\|h(.,0,.)\|^{2}_{L^2(Q_T)}+e^{2\lambda_0 T}\|\beta\|^{2}_{\infty}\|y\|^{2}_{L^2(Q)}.\]
Choosing $\lambda_0>e^{2\lambda_0 T}\|\beta\|^{2}_{\infty}+1$, we get
\[
\dfrac{1}{2}\int_{0}^{A}\int_{\Omega}y^{2}(x,a,0)dxda+\int_{0}^{T}\int_{0}^{A}\int_{\Omega}\| \nabla y\|^{2}_{L^2(\Omega)}dadt+\int_{0}^{T}\int_{0}^{A}\int_{\Omega}(\mu(a)+\lambda_0)y^{2} dxdadt\]
\[\leq \dfrac{1}{2}\int_{0}^{T}\int_{\Omega}h^2(x,0,t)dxdt.\]
Finally, we have \[
\dfrac{1}{2}\int_{0}^{A}\int_{\Omega}g^{2}(x,a,0)dxda+\int_{0}^{T}\int_{0}^{A}\int_{\Omega}\| \nabla g\|^{2}_{L^2(\Omega)}dadt+\int_{0}^{T}\int_{0}^{A}\int_{\Omega}(\mu(a)+\lambda_0)g^{2} dxdadt\]
\[\leq C_T\int_{0}^{T}\int_{\Omega}h^2(x,0,t)dxdt.\]
From the Proposition 4.2, we have:
	\begin{equation}
	\int_{0}^{A}\int_{\Omega}g^2(x,a,0)dxda\leq C\int_{0}^{T}\int_{\Omega}h^2(x,0,t)dxdt.\label{13}
	\end{equation}
		Combining the inequality $(\ref{13})$ and the inequality $(\ref{14})$ of the Proposition 2.1, for $h_T(x,a)=0\text{ a.e in } \Omega\times (0,\varrho),$ we obtain,
	\begin{equation}
	\int_{0}^{A}\int_{\Omega}g^2(x,a,0)dxda\leq C_{\varrho,T}\int_{0}^{T}\int_{0}^{a_2}\int_{\omega}h^2(x,a,t)dxdadt.\label{11}
	\end{equation}
	\end{proof}
	We are ready now to prove Proposition 4.1 
		\begin{proof}{of the Proposition 4.1}\\
	We use now the results of Proposition 2.2 and Proposition 4.2.\\
			Indeed, combining $(\ref{10})$ and $(\ref{11}),$ we get 
		\begin{equation}
	\int_{0}^{A}\int_{\Omega}g^2(x,a,0)dxda+\int_{0}^{A}\int_{\Omega}h^2(x,a,0)dxda\leq C_{\varrho,T}\int_{0}^{T}\int_{0}^{a_2}\int_{\omega}h^2(x,a,t)dxdadt.\label{120}
	\end{equation}
		\end{proof}
		 \subsubsection{Null controllability of the auxiliary system and proof of Theorem 1 2-(1)}
Now, let $\epsilon>0$ and $\varrho>0.$\\ 
We consider the functional $J_{\epsilon}$ defined by:
	\begin{equation}
		J_{\epsilon}(v_m)=\dfrac{1}{2}\int_{0}^{T}\int_{a_1}^{a_2}\int_{\omega}v_{m}^{2}dxdadt+\dfrac{1}{2\epsilon}\int_{\varrho}^{A}\int_{\Omega}m^2(x,a,T)dxda
	\end{equation} 
	where $(m,f)$ is the solution of the following system
	\begin{equation}
\left\{ 
\begin{array}{ccc}
m_t+m_a-K_m\Delta m+\mu_m m&=\chi_{\Theta} v_m&\text{ in }Q ,\\ 
f_t+f_a-K_f\Delta f+\mu_f f&=0&\text{ in }Q,\\ 
m(\sigma,a,t)=f(\sigma,a,t)&=0&\text{ on }\Sigma, \\
m(x,a,0)=m_0\quad f(x,a,0)=f_0&&\text{ in }Q_A,\\
m(x,0,t)=(1-\gamma)\int_{0}^{A}\beta(a,p)fda,\quad f(x,0,t)=\gamma \int_{0}^{A}\beta(a,p)fda&& \text{ in }Q_T.\\
\end{array}
\right.
\label{210}
\end{equation}
\begin{lemma}
The functional $J_{\epsilon}$ is continuous, strictly convex and coercive. Consequently $J_{\epsilon}$ reaches its minimum at a point $v_{m,\epsilon}\in L^2(\Theta).$\\ 
Moreover, setting $m_{\epsilon}$ the associated solution of $(\ref{210})$ and $n_{\epsilon}$ the solution $(\ref{ccc1})$ with $h_{\epsilon}(x,a,T)=-\dfrac{1}{\epsilon}\chi_{\{(\varrho,A)\}}m_{\epsilon}(x,a,T)$ one has $v_{m,\epsilon}=\chi_{\Theta}h_{\epsilon}$ and there exist $C_1>0$  and $C_2>0$ independent of $\epsilon$, such that
\[\int_{0}^{T}\int_{0}^{a_2}\int_{\omega}h^{2}_{\epsilon}dxdadt\leq C_1\left(\int_{0}^{A}\int_{\Omega}m^{2}_{0}(x,a)dxda+\int_{0}^{A}\int_{\Omega}f^{2}_{0}(x,a)dxda\right)\]
and 
\[\int_{\varrho}^{A}\int_{\Omega}m^{2}_{\epsilon}(x,a,T)dxda\leq \epsilon C_2\left(\int_{0}^{A}\int_{\Omega}m^{2}_{0}dxda+\int_{0}^{A}\int_{\Omega}f^{2}_{0}dxda\right).\]
\end{lemma}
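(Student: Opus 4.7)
The plan is to follow the pattern of Lemma~2.3, adapted to this simpler setting where only $v_m$ is a control and $f$ carries no source. First I would verify that $J_\epsilon$ is continuous (from continuity of $v_m\mapsto m_\epsilon(\cdot,\cdot,T)$ at $L^2$-level, a consequence of Proposition~2.1), strictly convex (the quadratic penalty $\tfrac12\|v_m\|^2_{L^2(\Theta)}$ is strictly convex and the map $v_m\mapsto m_\epsilon(T)$ is affine), and coercive (from $J_\epsilon(v_m)\geq\tfrac12\|v_m\|^2_{L^2(\Theta)}$). Existence and uniqueness of a minimizer $v_{m,\epsilon}\in L^2(\Theta)$ then follow from the direct method.

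Next I would derive the Euler--Lagrange optimality condition. Because $f$ depends only on $f_0$ and $p$ (not on $v_m$), perturbing $v_{m,\epsilon}$ by $\alpha w$ produces a variation $\tilde m$ solving the $m$-equation with source $\chi_\Theta w$, zero initial datum, and homogeneous age boundary $\tilde m(\cdot,0,\cdot)=0$. Setting $\frac{d}{d\alpha}J_\epsilon(v_{m,\epsilon}+\alpha w)\big|_{\alpha=0}=0$ and using the duality pairing with the adjoint $h_\epsilon$ (satisfying the homogeneous adjoint equation with terminal datum $h_\epsilon(x,a,T)=-\frac{1}{\epsilon}\chi_{(\varrho,A)}(a)\,m_\epsilon(x,a,T)$) yields $\chi_\Theta v_{m,\epsilon}=\chi_\Theta h_\epsilon$. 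Multiplying the $m$-equation in $(\ref{210})$ by $h_\epsilon$, integrating over $Q$, and using $h_\epsilon(x,A,t)=0$ together with the adjoint equation produces the energy identity
\begin{equation*}
\int_\Theta h_\epsilon^2\,dxdadt+\frac{1}{\epsilon}\int_\varrho^A\!\int_\Omega m_\epsilon^2(x,a,T)\,dxda=-\int_0^A\!\int_\Omega m_0\,h_\epsilon(x,a,0)\,dxda-\int_0^T\!\int_\Omega m_\epsilon(x,0,t)\,h_\epsilon(x,0,t)\,dxdt.
\end{equation*}

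Finally I would bound the right-hand side via Young's inequality with a small parameter $\delta>0$, then absorb the $h_\epsilon$-contributions using the two observability estimates already at hand: Proposition~4.1 gives $\|h_\epsilon(\cdot,\cdot,0)\|^2_{L^2(Q_A)}\leq C_{\varrho,T}\int_\Theta h_\epsilon^2$, and inequality $(\ref{14})$ of Proposition~2.1 gives $\|h_\epsilon(\cdot,0,\cdot)\|^2_{L^2(Q_T)}\leq C_{\varrho,T}\int_\Theta h_\epsilon^2$ (both hypotheses $a_1=0$ and $h_\epsilon(T)=0$ on $\Omega\times(0,\varrho)$ hold by construction of $h_\epsilon$). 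The birth trace $m_\epsilon(x,0,t)=(1-\gamma)\int_0^A\beta(a,p)f\,da$ is independent of $v_m$ and, using $\|\beta\|_\infty<\infty$ together with the a priori $L^2$-estimate for $f$ in Proposition~2.1, satisfies $\|m_\epsilon(\cdot,0,\cdot)\|_{L^2(Q_T)}\leq C\|f_0\|_{L^2(Q_A)}$. Choosing $\delta$ of order $1/C_{\varrho,T}$ so that the $h_\epsilon$-terms from the right-hand side are absorbed into the left yields
\begin{equation*}
\tfrac12\!\int_\Theta h_\epsilon^2+\tfrac{1}{\epsilon}\!\int_\varrho^A\!\int_\Omega m_\epsilon^2(\cdot,\cdot,T)\leq C\left(\|m_0\|^2_{L^2(Q_A)}+\|f_0\|^2_{L^2(Q_A)}\right),
\end{equation*}
from which the two stated bounds with $C_1,C_2$ independent of $\epsilon$ follow.

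The main obstacle I anticipate is the non-homogeneous cross-term $\int m_\epsilon(x,0,t)h_\epsilon(x,0,t)\,dxdt$ coming from the birth boundary, which did not appear in Lemma~2.3 because there both $m$ and $f$ were controlled and the two duality identities combined cleanly into $\int m_0 n_\epsilon(0)+\int f_0 l_\theta(0)$ alone. Here $f$ is uncontrolled, so this boundary cross-term has to be handled by combining an a priori $L^2$-bound on $m_\epsilon(\cdot,0,\cdot)$ in terms of $f_0$ with the $(\ref{14})$-type observability estimate on $h_\epsilon(\cdot,0,\cdot)$; this latter estimate is precisely the reason for the cutoff $\chi_{(\varrho,A)}$ in the terminal condition for $h_\epsilon$, which guarantees $h_\epsilon(x,a,T)=0$ on $\Omega\times(0,\varrho)$ so that inequality $(\ref{14})$ applies.
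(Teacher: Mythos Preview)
Your argument is correct, but it takes a somewhat different route from the one the paper intends. The paper simply says ``similar to Lemma~2.3'', and if one follows that proof literally one multiplies the $m$-equation by $h_\epsilon$ \emph{and} the $f$-equation by the second adjoint variable $g_\epsilon$ (which has terminal datum $g_\epsilon(\cdot,\cdot,T)=0$), then adds. The birth cross-term $\int_{Q_T} m_\epsilon(x,0,t)\,h_\epsilon(x,0,t)\,dxdt$ is then cancelled algebraically by the corresponding terms coming from the $f\cdot g_\epsilon$ identity, exactly as $(\ref{resa})+(\ref{resa1})$ did in Lemma~2.3, leaving only $\int m_0 h_\epsilon(0)+\int f_0 g_\epsilon(0)$ on the right; one then invokes the full observability inequality of Proposition~4.1 (which bounds both $h_\epsilon(0)$ and $g_\epsilon(0)$) and finishes with Young's inequality. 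Your belief that the cancellation in Lemma~2.3 relied on both components being controlled is a misconception: it is purely a feature of the adjoint structure, and works here as well because $g_\epsilon(T)=0$ removes any terminal contribution from the $f$-identity. Your alternative---estimating the cross-term directly via $\|m_\epsilon(\cdot,0,\cdot)\|_{L^2(Q_T)}\leq C\|f_0\|$ and the trace observability $(\ref{14})$ for $h_\epsilon(\cdot,0,\cdot)$---is perfectly valid and has the minor advantage of never introducing $g_\epsilon$, at the cost of appealing to the extra estimate $(\ref{14})$; the paper's route is slightly cleaner and explains why Proposition~4.1 was stated with both $h$ and $g$ on the left-hand side.
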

\begin{proof}{of Lemma 4.1}\\
	The proof of the Lemma 4.1 is similar to that of Lemma 2.3.
\end{proof} 
By making $\epsilon$ tending towards zero, we thus obtain that.
\\
$$\chi_{\Theta} h_{\epsilon}\rightharpoonup \chi_{\Theta} v_{m}\text{ and }(m_{\epsilon},f_{\epsilon})\rightharpoonup (m,f)$$ where $(m,f)$ is the solution of the system $(\ref{210})$ that verifies $$m(x,a,T)=0\text{ a.e in }(x,a)\in \Omega\times (\varrho, A).$$
 Let us define now the operator
		\[\Lambda: L^{2}(\Omega\times (0,T))\rightarrow L^{2}(\Omega\times (0,T))\quad p\longmapsto \int_{0}^{A}\lambda (a)m(p)da\]
		where the pair $(m(p),f(p))$ is the solution of the following cascade system:
		\begin{equation}
\left\{ 
\begin{array}{ccc}
\dfrac{\partial m(p)}{\partial t}+\dfrac{\partial m(p)}{\partial a}-K_m\Delta m(p)+\mu_m m(p)&=\chi_{\Theta}v_{m}(p)&\text{ in }Q ,\\ 
\dfrac{\partial f(p)}{\partial t}+\dfrac{\partial f(p)}{\partial a}-K_f\Delta f(p)+\mu_f f(p)&=0&\text{ in }Q,\\ 
m(p)(\sigma,a,t)=f_{\epsilon}(p)(\sigma,a,t)&=0&\text{ on }\Sigma, \\
m(p)(x,a,0)=m_0(x,a)\quad f(p)(x,a,0)=f_0&&\text{ in }Q_A,\\
m(x,0,t)=(1-\gamma)\int_{0}^{A}\beta(a,p)f(p)da,\quad f(x,0,t)=\gamma \int_{0}^{A}\beta(a,p)f(p)da&& \text{ in }Q_T,\\
\end{array}
\right.
\label{8777}
\end{equation}
with $v_m(p)$ such that the solution $(m(p),f(p))$ verifies $$m(p)(x,a,T)=0\text{ a.e in }(x,a)\in \Omega\times (\varrho, A).$$
Let $Y(x,t)=\int_{0}^{A}\lambda(a)m(p) da.$ It easy to prove that $Y$ is solution of the following system:
\begin{equation}
\left\{ 
\begin{array}{ccc}
Y_t-K_f\Delta Y+\int_{0}^{A}\mu_m(a)\lambda(a) m(p)da&=R(x,t)&\text{ in }Q_T,\\ 
Y(\sigma,t)&=0&\text{ on }\Sigma_T, \\
Y(x,0)=\int_{0}^{A}\lambda(a)m_0da&&\text{ in }\Omega,\\
\end{array}
\right.
\label{222}
\end{equation}
where \[R(x,t)=\int_{0}^{A}\lambda'(a)m(p)da+\int_{0}^{a_2}\lambda(a)v_{m}(p)da.\]
 By the Schauder's fixed point theorem it easy to prove that $\Lambda$ admits a fixed point which gives the result of the Theorem 1.2-1.\\

\subsection{Proof of the Theorem 1.2-(2)}
Let $p\in L^2(Q_T),$ under the assumptions of the Theorem 1.2, the following controllability problem find $v_f\in L^2(\Theta)$ such that the solution of the system:
\begin{equation}
\left\{ 
\begin{array}{ccc} 
f_t+f_a-K_f\Delta f+\mu_f f&=\chi_{\Theta'}v_f&\text{ in }Q,\\ 
f(\sigma,a,t)&=0&\text{ on }\Sigma, \\
f(x,a,0)=f_0&&\text{ in }Q_A,\\
 f(x,0,t)=\gamma \int_{0}^{A}\beta(a,p)fda&& \text{ in }Q_T.\\
\end{array}
\right.
\label{bba}
\end{equation}
verifies $$f(x,a,T)=0\text{ a.e } x\in \Omega\quad a\in (0,A)$$
is equivalent to the following observability inequality:
\begin{proposition}
	Let us assume true the assumption $(H_1)-(H_2)-(H_3).$ For any $T>a_1+A-a_2$ there exists $C_T>0$ such that
\begin{equation}
	\int_{0}^{A}\int_{\Omega}g^{2}(x,a,0)dxda\leq C_T\int_{\Xi}g^{2}dxdadt ,\label{100}
\end{equation}
where $g$ is the solution of the following system
\begin{equation}
\left\{ 
\begin{array}{ccc}
-g_t-g_a-K_m\Delta g+\mu_m g&=\beta(a,p)g(x,0,t)&\text{ in }Q ,\\ 
g(\sigma,a,t)&=0&\text{ on }\Sigma, \\
g(x,a,T)=g_T&&\text{ in }Q_A,\\
g(x,A,t)=0,&& \text{ in }Q_T.\\
\end{array}
\right.
\label{ccca}
\end{equation}	
\end{proposition}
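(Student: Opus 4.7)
The plan is to mirror the proof of Theorem 2.1, simplified by the fact that there is now a single state $g$ rather than a coupled pair $(n,l)$. As in the coupled case, I split the quantity $\int_0^A\int_\Omega g^2(x,a,0)\,dx\,da$ into a low-age part $\int_0^{a_0}$ and a high-age part $\int_{a_0}^A$, where $a_0$ is chosen just below $a_2$. Concretely, since $T>a_1+A-a_2$, I can fix $\kappa>0$ with $2\kappa<a_2-a_1$ such that $T-(a_1+\kappa)>A-(a_2-\kappa)$, and set $a_0=a_2-\kappa$. Then $T-0>A-a_0>A-a$ for every $a\in(a_0,A)$, exactly as in Lemma 2.2. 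Note that, by $(H_3)$-(i), $\beta(a,p)=0$ for $a<b$, so on the age slab $(0,\min\{a_2,b\})$ the $g$-equation is source-free; working, without loss of generality, on $(0,X)$ with $X=\min\{a_2,b\}$, the equation reduces to a pure backward parabolic transport equation with absorption, exactly the setting in which Propositions 2.2 and 2.4 apply.

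For the high-age part $a\in(a_0,A)$, I represent $g(x,a,0)$ along characteristics, as in Lemma 2.2:
\[
g(x,a,0)=\int_0^{A-a}\frac{\pi_2(a+s)}{\pi_2(a)}\,e^{sK_f\Delta}\bigl(\beta(a+s,p(x,s))\,g(x,0,s)\bigr)\,ds.
\]
Using $\|\pi_2(a+s)/\pi_2(a)\|\le1$, the contraction of the Dirichlet heat semigroup, the boundedness of $\beta$ and Cauchy--Schwarz, I obtain
\[
\int_{a_0}^A\int_\Omega g^2(x,a,0)\,dx\,da\le K_T\int_0^{T-(a_1+\kappa)}\int_\Omega g^2(x,0,t)\,dx\,dt,
\]
with $K_T$ independent of $p$ because $\|\beta\|_\infty$ is. I then invoke Proposition 2.2 (inequality $(\ref{d0d})$) with $\eta=a_1+\kappa$ to bound the trace $\int_0^{T-(a_1+\kappa)}\int_\Omega g^2(x,0,t)\,dx\,dt$ by $C\int_\Xi g^2\,dx\,da\,dt$; this is legitimate because $g$ satisfies a source-free backward parabolic equation on the age band below $X$.

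For the low-age part $a\in(0,a_0)$, the equation for $g$ is again source-free on $(0,a_0)\subset(0,X)$, so Proposition 2.4 applies verbatim (with $g$ playing the role of $n$ and $K_f,\mu_f$ replacing $K_m,\mu_m$), yielding
\[
\int_0^{a_0}\int_\Omega g^2(x,a,0)\,dx\,da\le C_T\int_\Xi g^2(x,a,t)\,dx\,da\,dt.
\]
Adding the two contributions gives $(\ref{100})$.

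The only real obstacle is bookkeeping: one must verify that the geometric conditions ensuring Propositions 2.2 and 2.4 can be invoked on the slab $(0,X)$ (i.e.\ that the observation window $(a_1,a_2)$ overlaps the source-free region and that $T>A-a_0+a_1-\kappa$, which is precisely what the choice of $\kappa$ secures). The constant $C_T$ inherits its $p$-independence from the $L^\infty$ bound on $\beta$, just as in Remark 2.2, so the inequality is uniform in $p\in L^2(Q_T)$, which is what is required for the subsequent fixed-point argument.
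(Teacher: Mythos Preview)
Your proposal is correct and follows essentially the same route as the paper. The paper's own proof of Proposition~4.3 is a one-line sketch invoking exactly the three ingredients you spell out: the trace estimate of Proposition~2.2, the low-age estimate (the paper cites Proposition~2.5 rather than 2.4, but these are the same argument specialised to a state that is source-free on $(0,b)$), and the Duhamel/characteristics representation of $g$ for the high-age band. Two cosmetic points: the system~(\ref{ccca}) carries $K_m,\mu_m$, so the formula should use $\pi_1$ and $e^{sK_m\Delta}$ rather than $\pi_2,K_f$; and when you say ``Proposition~2.4 applies verbatim'' you are really invoking only its core estimate~(\ref{12v2}) on $(0,a_0)$, since the extension to $(a_0,A)$ in Proposition~2.4 relies on $n(\cdot,a,0)=0$ there, which fails for $g$---but you handle $(a_0,A)$ separately anyway, so this is harmless.
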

\begin{proof}{of the Proposition 4.3}\\
	Using the inequality $(14)$ of the Proposition 2.2, the result of the Proposition 2.5 and the representation of the solution of the system $(\ref{ccca}),$ we obtain the desired result.
\end{proof}
Now we consider the following operator
$$N:L^2(Q_T)\rightarrow L^2(Q_T) \quad p\longmapsto v_f(p)\longmapsto (p,f(v_f(p))\longmapsto m(p,f(v_f(p),p))\longmapsto \int_{0}^{A}\lambda(a)m(p,f(v_f(p),p))da,$$ where $(m(p,f(v_f(p))),f(p,v_f(p)))$ is the solution of the following system
\begin{equation}
\left\{ 
\begin{array}{ccc}
m_t+m_a-K_m\Delta m+\mu_m m&=0&\text{ in }Q ,\\ 
f_t+f_a-K_f\Delta f+\mu_f f&=\chi_{\Theta'}v_f&\text{ in }Q,\\ 
m(\sigma,a,t)=f(\sigma,a,t)&=0&\text{ on }\Sigma, \\
m(x,a,0)=m_0\quad f(x,a,0)=f_0&&\text{ in }Q_A,\\
m(x,0,t)=(1-\gamma)\int_{0}^{A}\beta(a,p)fda,\quad f(x,0,t)=\gamma \int_{0}^{A}\beta(a,p)fda&& \text{ in }Q_T.\\
\end{array}
\right.
\label{bbb1}
\end{equation}	

 By applying Schauder's fixed point Theorem it follows that $N$ admits a fixed point. And therefore proves Theorem 1.2-2.\\
 \begin{remark}
 	Here, we were able to obtain the complete extinction of the females. This makes the possibility to obtain, after an interval of time greater than $A$, the extinction of the entire population since there is no longer birth.
 	But it seems difficult to obtain first the total eradication of male individuals. In our view point, this is naturally difficult because of births.
 \end{remark}
	   \end{document}